\title{Decomposition of tensor products of Demazure crystals}
\author{Takafumi Kouno}
\date{}
\subjclass[2010]{Primary 05E15; Secondary 05E05, 17B37}
\keywords{Demazure crystals, Lakshmibai-Seshadri paths, key polynomials}
\address{Department of Mathematics, Tokyo Institute of Technology, 2-12-1 Oh-okayama, Meguro-ku, Tokyo 152-8551, Japan}
\email{kouno.t.ab@m.titech.ac.jp}
\newcommand{\wt}{\mathrm{wt}}
\newcommand{\ch}{\mathrm{ch}}
\newcommand{\dist}{\mathrm{dist}}
\theoremstyle{definition}
\newtheorem{thm}{Theorem}[section]
\newtheorem{defn}[thm]{Definition}
\newtheorem{lem}[thm]{Lemma}
\newtheorem{prop}[thm]{Proposition}
\newtheorem{cor}[thm]{Corollary}
\newtheorem{exm}[thm]{Example}
\theoremstyle{remark}
\newtheorem{rem}[thm]{Remark}
\begin{document}

\begin{abstract}
In general, a tensor product of Demazure crystals does not decompose into a disjoint union of Demazure crystals. However, under a certain condition, a tensor product decomposes into a disjoint union of Demazure crystals. In this paper, we introduce a necessary and sufficient condition for every connected component of a tensor product of two Demazure crystals to be isomorphic to some Demazure crystal. Moreover, we consider a recursive formula describing connected components of tensor products of arbitrary Demazure crystals. As an application, we discuss the key positivity problem, which is the problem whether a product of key polynomials is a linear combination of key polynomials with nonnegative integer coefficients or not. Also, we obtain a crystal-theoretic analog of the Leibniz rule for Demazure operators.
\end{abstract}

\maketitle

\section{Introduction}

Let $\mathfrak{g}$ be a finite dimensional simple Lie algebra over $\mathbb{C}$, and $U_q(\mathfrak{g})$ the quantum group of $\mathfrak{g}$ over the field $\mathbb{Q}(q)$ of rational functions over $\mathbb{Q}$ in a variable $q$. For a dominant integral weight $\lambda$, we denote by $L(\lambda)$ the irreducible highest weight $U_q(\mathfrak{g})$-module with highest weight $\lambda$. It is well-known that the category of finite dimensional $U_q(\mathfrak{g})$-modules of type {\boldmath $1$}  is semisimple. Hence, for dominant integral weights $\lambda$ and $\mu$, $L(\lambda) \otimes L(\mu)$ decomposes into a direct sum of finite dimensional type {\boldmath $1$} irreducible highest weight $U_q(\mathfrak{g})$-modules, namely, there exists a nonnegative integer $c_{\lambda, \mu}^\nu$ for each dominant integral weight $\nu$ such that
\begin{align*}
L(\lambda) \otimes L(\mu) \simeq \bigoplus_{\nu \in P^+}L(\nu)^{\oplus c_{\lambda, \mu}^\nu},
\end{align*}
where $P^+$ is the set of dominant integral weights. The integers $c_{\lambda, \mu}^\nu$ are called Littlewood-Richardson coefficients. 

For a dominant integral weight $\nu$, the module $L(\nu)$ has a crystal basis $(\mathcal{L}(\nu), \mathcal{B}(\nu))$. From the decomposition above of $L(\lambda) \otimes L(\mu)$, we obtain the corresponding formula
\begin{align*}
\mathcal{B}(\lambda) \otimes \mathcal{B}(\mu) \simeq \bigsqcup_{\nu \in P^+} \mathcal{B}(\nu)^{\oplus c_{\lambda, \mu}^\nu};
\end{align*}
namely, a tensor product of crystal bases decomposes into a disjoint union of connected highest weight crystals.

Now we consider Demazure crystals. Does a tensor product of Demazure crystals decompose into a disjoint union of Demazure crystals? In general, the answer is no. However, in some special cases, it is known that a tensor product of Demazure crystals decomposes into a disjoint union of Demazure crystals. Let $W$ be the Weyl group of $\mathfrak{g}$. For $w \in W$ and $\lambda \in P^+$, we denote by $\mathcal{B}_w(\lambda)$ the Demazure crystal of lowest weight $w\lambda$. Lakshmibai, Littelmann, and Magyar proved the following theorem.

\begin{thm}[{\cite[Proposition 12]{Magyar}}]
Let $\lambda$ and $\mu$ be dominant integral weights, and $w$ an element of $W$. Then $\mathcal{B}_e(\lambda) \otimes \mathcal{B}_w(\mu)$ decomposes into a disjoint union of Demazure crystals.
\end{thm}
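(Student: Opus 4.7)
The plan is to identify the set $\mathcal{B}_e(\lambda) \otimes \mathcal{B}_w(\mu) = \{u_\lambda \otimes b : b \in \mathcal{B}_w(\mu)\}$ as a subset of $\mathcal{B}(\lambda) \otimes \mathcal{B}(\mu)$ and to show that its intersection with each connected component of $\mathcal{B}(\lambda) \otimes \mathcal{B}(\mu)$ is a Demazure subcrystal. The tool I will use is a standard characterization of Demazure subcrystals (in forms going back to Kashiwara and Littelmann): a subset $S$ of a connected highest-weight crystal $\mathcal{B}(\nu)$ is a Demazure subcrystal if and only if $S$ is closed under all raising operators $\tilde{e}_i$ and, for each $i$, the intersection of $S$ with every maximal $i$-string of $\mathcal{B}(\nu)$ is either empty or an \emph{upper segment}, meaning a consecutive subset containing the top of the string.

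Closure under $\tilde{e}_i$ follows at once from the tensor product rule. Since $\tilde{e}_i u_\lambda = 0$, one has $\tilde{e}_i(u_\lambda \otimes b) = u_\lambda \otimes \tilde{e}_i b$ whenever the outcome is non-zero; since $\mathcal{B}_w(\mu)$ itself is closed under $\tilde{e}_i$, the result remains in $\{u_\lambda\} \otimes \mathcal{B}_w(\mu)$.

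The upper segment property is verified by a case analysis on the $i$-string through $u_\lambda \otimes b$. Set $N := \langle \lambda, \alpha_i^\vee \rangle = \varphi_i(u_\lambda)$. If $\varepsilon_i(b) \geq N$, the tensor product rule forces $\tilde{f}_i$ to act only on the second factor throughout the $i$-string, so the string lies entirely inside $\{u_\lambda\} \otimes \mathcal{B}(\mu)$ and corresponds, via the second factor, to the lower piece $\{\tilde{f}_i^k b^{\ast} : 0 \leq k \leq \varphi_i(b^{\ast})\}$ of the $i$-string of $b$ in $\mathcal{B}(\mu)$, where $b^{\ast} := \tilde{e}_i^{\varepsilon_i(b) - N} b$ is the unique element in that string with $\varepsilon_i(b^{\ast}) = N$. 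Its intersection with $\{u_\lambda\} \otimes \mathcal{B}_w(\mu)$ is the image under $u_\lambda \otimes (\cdot)$ of the intersection of that sub-$i$-string with $\mathcal{B}_w(\mu)$, which is an upper segment because $\mathcal{B}_w(\mu)$ itself enjoys the upper segment property inside $\mathcal{B}(\mu)$. If instead $\varepsilon_i(b) < N$, then $u_\lambda \otimes b$ is already the top of its $i$-string, and the very first application of $\tilde{f}_i$ hits the $u_\lambda$-factor and exits $\{u_\lambda\} \otimes \mathcal{B}(\mu)$; hence the intersection with $\{u_\lambda\} \otimes \mathcal{B}_w(\mu)$ reduces to the singleton $\{u_\lambda \otimes b\}$, trivially an upper segment.

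I expect the main obstacle to be the legitimate invocation of the upper-segment characterization, which is a non-trivial structural result about Demazure crystals and may require its own justification (or a suitable citation). A self-contained alternative is induction on $\ell(w)$ using the recursion $\mathcal{B}_{s_i w}(\mu) = \mathcal{F}_i \mathcal{B}_w(\mu)$: the base case $w = e$ is immediate, since $u_\lambda \otimes u_\mu$ is the highest-weight element of the component isomorphic to $\mathcal{B}(\lambda + \mu)$. In the inductive step, however, the subtle point is that newly introduced elements $u_\lambda \otimes \tilde{f}_i^k b$ with $\varepsilon_i(b) < N$ typically land in different connected components of $\mathcal{B}(\lambda) \otimes \mathcal{B}(\mu)$ from $u_\lambda \otimes b$, where they serve as fresh highest-weight seeds of new Demazure pieces; bookkeeping these migrations between components—rather than applying $\mathcal{F}_i$ naively component-by-component—is where the real work lies.
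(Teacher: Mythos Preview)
Your main line of argument has a genuine gap: the ``upper-segment characterization'' you invoke is false. Being closed under all $\tilde{e}_i$ and meeting every $i$-string in an upper segment (even in the strong form: empty, top singleton, or full string) does \emph{not} force a subset of a highest weight crystal to be a Demazure crystal. A small counterexample: take $\mathfrak{g}=\mathfrak{sl}_3$ and $\nu = 2\varpi_1$. In terms of one-row tableaux the crystal $\mathcal{B}(2\varpi_1)$ has six elements $\{11,12,22,13,23,33\}$, and the only Demazure crystals are $\{11\}$, $\{11,12,22\}$, and all of $\mathcal{B}(2\varpi_1)$. Now set $S=\{11,12,22,13\}$. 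One checks directly that $S$ is closed under $\tilde{e}_1,\tilde{e}_2$ and that $S$ meets every $1$-string and every $2$-string either in the full string or in its top element; yet $S$ has four elements and is not a Demazure crystal. So the criterion you rely on is not a characterization, and your argument does not conclude.

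Your verification that $\{b_\lambda\}\otimes\mathcal{B}_w(\mu)$ enjoys the string property is correct and is indeed a necessary feature; what is missing is precisely the harder direction. The paper does not supply its own proof of this statement: it is quoted from Lakshmibai--Littelmann--Magyar, where it is obtained as part of the structure theory of \emph{generalized} Demazure crystals (sets of the form $\mathcal{F}_{i_1}(\{b_{\lambda_1}\}\otimes \mathcal{F}_{i_2}(\{b_{\lambda_2}\}\otimes\cdots))$), which are shown to be disjoint unions of Demazure crystals. Your alternative inductive sketch is pointing in exactly this direction, and your diagnosis of the difficulty is accurate: $\{b_\lambda\}\otimes \mathcal{B}_{s_iw}(\mu)$ is \emph{not} simply $\mathcal{F}_i$ applied to $\{b_\lambda\}\otimes\mathcal{B}_w(\mu)$, so one cannot just push the $\mathcal{F}_i$ through component by component. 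Making the induction go requires the generalized-Demazure machinery (or an equivalent combinatorial substitute), not merely the string property.
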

Note that $\mathcal{B}_e(\lambda) = \{ b_\lambda \}$, where $b_\lambda \in \mathcal{B}(\lambda)$ is the highest weight element.

The first main result of this paper is the following theorem, which generalizes the above one.
Let $\Phi^+$ be the set of positive roots, $\{\alpha_i \}_{i \in I}$ the set of simple roots, and $s_i$ the simple reflection corresponding to $\alpha_i$. We set $W_\lambda := \{ w \in W \ | \ w\lambda = \lambda \}$ for $\lambda \in P^+$. For $w \in W$, we denote by $\lfloor w \rfloor^\lambda$ the minimal-length representative for the coset $wW_\lambda$, and by $\lceil w \rceil^\lambda$ the maximal-length representative for the coset $wW_\lambda$. For $w \in W$, we denote by $\ell(w)$ the length of $w$.
\begin{thm}\label{mainthm}
Let $\lambda$ and $\mu$ be dominant integral weights. For $w \in W$, we define the group $W_w$ as the subgroup of $W$ generated by $\{ s_i \ | \ i \in I, \ \ell(s_{i}w) < \ell(w) \}$. Then, for $v, w \in W$, the set $\mathcal{B}_v(\lambda) \otimes \mathcal{B}_w(\mu)$ decomposes into a disjoint union of Demazure crystals if and only if $\lfloor v \rfloor^\lambda \in W_{\lceil w \rceil^\mu}$.
\end{thm}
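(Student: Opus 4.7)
Since the Demazure crystal $\mathcal{B}_u(\nu)$ depends on $u$ only through the coset $uW_\nu$, we have $\mathcal{B}_v(\lambda) = \mathcal{B}_{\lfloor v \rfloor^\lambda}(\lambda)$ and $\mathcal{B}_w(\mu) = \mathcal{B}_{\lceil w \rceil^\mu}(\mu)$, so without loss of generality I take $v = \lfloor v \rfloor^\lambda$ and $w = \lceil w \rceil^\mu$. The hypothesis then becomes $v \in W_w$, the standard parabolic subgroup generated by the left descents of $w$; equivalently, every reduced expression of $v$ in $W$ uses only simple reflections that are left descents of $w$.

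For the ``if'' direction I would induct on $\ell(v)$. The base case $v = e$ is Theorem 1.1 (Lakshmibai--Littelmann--Magyar). For the inductive step, pick any left descent $s_i$ of $v$; the reduction forces $s_i$ to also be a left descent of $w$, and writing $v = s_i v'$ with $\ell(v') = \ell(v) - 1$ gives $v' = \lfloor v' \rfloor^\lambda \in W_w$, whence by induction $\mathcal{B}_{v'}(\lambda) \otimes \mathcal{B}_w(\mu) = \bigsqcup_j \mathcal{B}_{u_j}(\nu_j)$. The central technical lemma I would prove is
\begin{equation*}
\mathcal{D}_i\bigl(\mathcal{B}_{v'}(\lambda) \otimes \mathcal{B}_w(\mu)\bigr) = \mathcal{B}_v(\lambda) \otimes \mathcal{B}_w(\mu),
\end{equation*}
from the tensor-product signature rule using $\mathcal{B}_v(\lambda) = \mathcal{D}_i \mathcal{B}_{v'}(\lambda)$ together with the $i$-closedness $\mathcal{D}_i \mathcal{B}_w(\mu) = \mathcal{B}_w(\mu)$ (a consequence of $s_i$ being a left descent of $w$). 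Applying $\mathcal{D}_i$ to the inductive decomposition then yields $\mathcal{B}_v(\lambda) \otimes \mathcal{B}_w(\mu) = \bigsqcup_j \mathcal{D}_i \mathcal{B}_{u_j}(\nu_j)$, each $\mathcal{D}_i \mathcal{B}_{u_j}(\nu_j)$ being itself a Demazure crystal; preservation of disjointness under $\mathcal{D}_i$ requires a tracing of $f_i$-strings, since any post-$\mathcal{D}_i$ collision of components would force a collision already in the inductive decomposition.

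For the ``only if'' direction I argue contrapositively. Assume $v \notin W_w$, so some simple reflection $s_j$ is a left descent of $v$ but not of $w$. The weight $v\lambda + w\mu$ is the unique lowest weight of $\mathcal{B}_v(\lambda) \otimes \mathcal{B}_w(\mu)$, attained solely by $b_{v\lambda} \otimes b_{w\mu}$, where $b_{u\nu}$ denotes the unique element of weight $u\nu$ in $\mathcal{B}_u(\nu)$. If the tensor product decomposed as $\bigsqcup_k \mathcal{B}_{u_k}(\nu_k)$, then $b_{v\lambda} \otimes b_{w\mu}$ would be the lowest-weight element of its component, forcing $v\lambda + w\mu$ to be an extremal weight of the ambient irreducible $\mathcal{B}(\nu_k)$ and the element itself to be Kashiwara-extremal therein; in particular $\min(\varphi_j, \varepsilon_j)(b_{v\lambda} \otimes b_{w\mu}) = 0$. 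However, a direct signature-rule computation---using $\varphi_j(b_{v\lambda}) = 0$ and $\varepsilon_j(b_{v\lambda}) > 0$ (from $s_j$ being a left descent of $v$ with $v = \lfloor v \rfloor^\lambda$), together with $\varphi_j(b_{w\mu}) > 0$ (from $s_j$ not being a left descent of $w$ with $w = \lceil w \rceil^\mu$)---shows that both $\varphi_j$ and $\varepsilon_j$ of $b_{v\lambda} \otimes b_{w\mu}$ are strictly positive, yielding the contradiction. The sharpest technical obstacle I foresee is the commutation lemma in the ``if'' direction, particularly the disjointness-preservation clause under $\mathcal{D}_i$; the ``only if'' obstruction, once identified via the signature-rule calculation, is comparatively clean and already visible in the smallest failing example, $\mathfrak{g} = \mathfrak{sl}_3$, $\lambda = \mu = \omega_1$, $v = s_1$, $w = e$.
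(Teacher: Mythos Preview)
Your ``if'' direction is essentially the paper's argument, organized as an induction; the commutation lemma $\mathcal{D}_i(\mathcal{B}_{v'}(\lambda)\otimes\mathcal{B}_w(\mu)) = \mathcal{B}_v(\lambda)\otimes\mathcal{B}_w(\mu)$ is exactly what the paper proves (as two inclusions, Lemmas~\ref{mainthm_2} and~\ref{mainthm_3}), and the disjointness clause is handled there by noting that each $\mathcal{D}_i$-image stays inside its own connected component $C(\pi,v)$ of the full $\mathcal{B}(\lambda)\otimes\mathcal{B}(\mu)$.

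Your ``only if'' direction has a genuine gap. The inference ``$v\notin W_w$, so some left descent $s_j$ of $v$ is not a left descent of $w$'' is false: what $v\notin W_w$ gives you is that some simple reflection \emph{appearing in a reduced word for $v$} lies outside $D_L(w)$, but this reflection need not be a left descent of $v$ itself. Concretely, in $\mathfrak{sl}_3$ take $v=s_1s_2$ and $w=s_1$ with $\lambda,\mu$ regular; then $v\notin W_w=\{e,s_1\}$, yet the unique left descent of $v$ is $s_1$, which \emph{is} a left descent of $w$. Worse, in this example a direct computation shows $\min(\varphi_j,\varepsilon_j)(b_{v\lambda}\otimes b_{w\mu})=0$ for \emph{both} $j=1,2$, so the element $b_{v\lambda}\otimes b_{w\mu}$ really is extremal and cannot witness the obstruction---the component that fails to be Demazure is a different one. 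The paper's proof locates the offending index $i_k$ \emph{inside} a reduced expression $\lfloor v\rfloor^\lambda=s_{i_1}\cdots s_{i_l}$ (the first $k$ with $s_{i_k}\notin D_L(\lceil w\rceil^\mu)$), and then exhibits a string-property violation not at $b_{v\lambda}\otimes b_{w\mu}$ but at the interior element $\pi^{s_{i_{k+1}}\cdots s_{i_l}\lambda}\otimes\pi^{w\mu}$: along its $i_k$-string one first stays in $\mathcal{B}_v(\lambda)\otimes\mathcal{B}_w(\mu)$ (because $\tilde f_{i_k}$ hits the left factor), then exits (because eventually $\tilde f_{i_k}$ hits $\pi^{w\mu}$, and $\tilde f_{i_k}\pi^{w\mu}\notin\mathcal{B}_w(\mu)$ since $\langle w\mu,\alpha_{i_k}^\vee\rangle>0$). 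You will need this interior element, or something equivalent, to make the argument go through.
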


The key positivity problem is the problem whether a product of key polynomials is a linear combination of key polynomials with nonnegative integer coefficients or not. If $\mathfrak{g}$ is of type A, then the character of a Demazure crystal is identical to a key polynomial. Hence the above theorem tells us a sufficient condition for the key positivity problem. For $w \in W$ and $\lambda \in P^+$, we denote by $\kappa_{w\lambda}$ the character of $\mathcal{B}_w(\lambda)$. Then we obtain the following theorem. 

\begin{thm}\label{app1}
Let $\lambda$ and $\mu$ be dominant integral weights, $v$ and $w$ elements of $W$. If $\lfloor v \rfloor^\lambda \in W_{\lceil w \rceil^\mu}$ or $\lfloor w \rfloor^\mu \in W_{\lceil v \rceil^\lambda}$, then the product $\kappa_{v\lambda} \kappa_{w\mu}$ is a linear combination of the characters of Demazure crystals with nonnegative integer coefficients.
\end{thm}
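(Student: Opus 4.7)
The plan is to derive Theorem~\ref{app1} as an immediate character-theoretic consequence of Theorem~\ref{mainthm}. The essential input is the standard fact that the character of a tensor product of crystals equals the product of their characters: since the weight function on a tensor product is additive, $\wt(b_1 \otimes b_2) = \wt(b_1) + \wt(b_2)$, and hence $\ch(\mathcal{B}_v(\lambda) \otimes \mathcal{B}_w(\mu)) = \kappa_{v\lambda}\,\kappa_{w\mu}$.

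Under the first hypothesis $\lfloor v \rfloor^\lambda \in W_{\lceil w \rceil^\mu}$, Theorem~\ref{mainthm} supplies a decomposition
\[
\mathcal{B}_v(\lambda) \otimes \mathcal{B}_w(\mu) \simeq \bigsqcup_{i} \mathcal{B}_{u_i}(\nu_i)
\]
for some $u_i \in W$ and $\nu_i \in P^+$ (with multiplicities). Taking characters then yields
\[
\kappa_{v\lambda}\,\kappa_{w\mu} \;=\; \sum_i \kappa_{u_i \nu_i},
\]
which is the desired nonnegative-integer expansion into characters of Demazure crystals. Under the second hypothesis $\lfloor w \rfloor^\mu \in W_{\lceil v \rceil^\lambda}$, I would apply Theorem~\ref{mainthm} to the swapped tensor product $\mathcal{B}_w(\mu) \otimes \mathcal{B}_v(\lambda)$ and then use commutativity of the product in the group ring of the weight lattice, $\kappa_{w\mu}\,\kappa_{v\lambda} = \kappa_{v\lambda}\,\kappa_{w\mu}$, to obtain the same conclusion.

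Given Theorem~\ref{mainthm}, there is no real obstacle; the argument is essentially a single-line deduction, and the entire substance of the statement is already contained in Theorem~\ref{mainthm}. The role of the present theorem is just to record the observation that a disjoint-union decomposition of a crystal into Demazure pieces forces a manifestly nonnegative decomposition of its character into key-type characters, together with the symmetry trick that covers the second hypothesis.
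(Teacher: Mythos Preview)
Your proposal is correct and follows essentially the same approach as the paper. The paper's argument (given as Theorem~\ref{key_pos} and Corollary~\ref{keypos2} in Section~\ref{application}) likewise takes characters of the Demazure decomposition supplied by Corollary~\ref{maincor} (the explicit form of Theorem~\ref{mainthm}) to obtain the nonnegative expansion, and then handles the second hypothesis by the same symmetry observation you describe.
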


The next main result of this paper is a recursive formula describing connected components of tensor products of arbitrary Demazure crystals. For each dominant integral weight $\lambda$, we identify $\mathcal{B}(\lambda)$ as the crystal composed of all Lakshmibai-Seshadri paths of shape $\lambda$. Let $\lambda$ and $\mu$ be dominant integral weights, $v$ and $w$ elements of $W$. For a $\lambda$-dominant path $\pi \in \mathcal{B}_w(\mu)$, we denote by $C(\pi, v)$ the connected component of $\mathcal{B}_v(\lambda) \otimes \mathcal{B}_w(\mu)$ whose highest weight element is $\pi^\lambda \otimes \pi$, where $\pi^\lambda$ is the highest weight element of $\mathcal{B}(\lambda)$. An idea is to compare $C(\pi, v)$ and $C(\pi, s_i v)$ for $i \in I$ with $\ell(s_i v) > \ell (v)$. We obtain the following formula.

\begin{thm}\label{mainthm2}
Let $\lambda, \mu$ be dominant integral weights, $v, w$ elements of the Weyl group $W$. Take $i \in I$ which satisfies $\ell (s_i v) > \ell (v)$. Also, we take a $\lambda$-dominant path $\pi \in \mathcal{B}_w(\mu)$. Then we have the following equation.
\begin{align*}
& \ C(\pi, s_i v) \\
=& \ \left( \bigcup_{a \geq 0} \tilde{f}_i^a(C(\pi, v)) \setminus \{ 0 \} \right) \\
& \ \setminus \left\{ \tilde{f}_{i}^{\langle \wt(\pi_1), \alpha_i^\vee \rangle} (\pi_1) \otimes \tilde{f}_{i}^b(\pi_2) \left| \begin{array}{l}\pi_1 \otimes \pi_2 \in C(\pi, v), \\ \tilde{e}_{i}(\pi_1) = 0, \ \tilde{f}_{i}(\pi_2) \not\in \mathcal{B}_w(\mu) \sqcup \{ 0 \}, \\ 1 \leq b \leq \langle \wt(\pi_2), \alpha_i^\vee \rangle. \end{array} \right. \right\}.
\end{align*}
\end{thm}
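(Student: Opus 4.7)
The plan is to compare $C(\pi, s_i v)$ and $C(\pi, v)$ by analysing $i$-strings in the ambient tensor product $\mathcal{B}(\lambda) \otimes \mathcal{B}(\mu)$. The key standard facts to invoke are that $\mathcal{B}_v(\lambda)$ is $\tilde{e}_j$-stable for every $j \in I$, and under the hypothesis $\ell(s_i v) > \ell(v)$ one has $\mathcal{B}_{s_i v}(\lambda) = \bigcup_{a \geq 0} \tilde{f}_i^a(\mathcal{B}_v(\lambda)) \setminus \{0\}$. Applied to the tensor products, both $\mathcal{B}_v(\lambda) \otimes \mathcal{B}_w(\mu)$ and $\mathcal{B}_{s_i v}(\lambda) \otimes \mathcal{B}_w(\mu)$ are $\tilde{e}_j$-stable subsets of $\mathcal{B}(\lambda) \otimes \mathcal{B}(\mu)$, and $C(\pi, v) \subseteq C(\pi, s_i v)$ since both connected components contain $\pi^\lambda \otimes \pi$ and any zigzag in the smaller set remains valid in the larger one.

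For the inclusion $\supseteq$, I would take $\eta = \tilde{f}_i^a(\xi_1 \otimes \xi_2) \neq 0$ with $\xi_1 \otimes \xi_2 \in C(\pi, v)$, $a \geq 0$, and $\eta$ not in the excluded set, and trace through the tensor product rule. The iterated action of $\tilde{f}_i$ splits into a first-factor phase, where $\tilde{f}_i^j(\xi_1) \otimes \xi_2$ lies automatically in $\mathcal{B}_{s_i v}(\lambda) \otimes \mathcal{B}_w(\mu)$ by the Demazure description, and a second-factor phase producing elements of the form $\tilde{f}_i^{\varphi_i(\xi_1) - \varepsilon_i(\xi_2)}(\xi_1) \otimes \tilde{f}_i^b(\xi_2)$. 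The only way to exit $\mathcal{B}_{s_i v}(\lambda) \otimes \mathcal{B}_w(\mu)$ is for $\tilde{f}_i^b(\xi_2)$ to leave $\mathcal{B}_w(\mu)$, and a case-by-case identification of these bad configurations with the parametrisation in the excluded set shows that $\eta$ remains in $\mathcal{B}_{s_i v}(\lambda) \otimes \mathcal{B}_w(\mu)$; then $\eta$ lies in $C(\pi, s_i v)$ via its $\tilde{f}_i$-connection back to $\xi_1 \otimes \xi_2 \in C(\pi, v) \subseteq C(\pi, s_i v)$.

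For the reverse inclusion, the excluded set is disjoint from $C(\pi, s_i v)$ because each of its elements has second tensor factor outside $\mathcal{B}_w(\mu)$. To show that $C(\pi, s_i v)$ is contained in $\bigcup_{a \geq 0} \tilde{f}_i^a(C(\pi, v)) \setminus \{0\}$, I would take $\eta \in C(\pi, s_i v)$ and raise it via $\tilde{e}_i$ to the top $\eta_0$ of its $i$-string in $\mathcal{B}(\lambda) \otimes \mathcal{B}(\mu)$; by $\tilde{e}_i$-closure of $\mathcal{B}_{s_i v}(\lambda) \otimes \mathcal{B}_w(\mu)$, $\eta_0 \in C(\pi, s_i v)$, and since $i$-string tops in $\mathcal{B}_{s_i v}(\lambda)$ coincide with those in $\mathcal{B}_v(\lambda)$, the first tensor factor of $\eta_0$ lies in $\mathcal{B}_v(\lambda)$, so $\eta_0 \in \mathcal{B}_v(\lambda) \otimes \mathcal{B}_w(\mu)$. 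Once $\eta_0 \in C(\pi, v)$ is established, writing $\eta = \tilde{f}_i^a(\eta_0)$ completes this containment.

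The main obstacle is the last step, namely proving $\eta_0 \in C(\pi, v)$. Although $\eta_0$ lies in $\mathcal{B}_v(\lambda) \otimes \mathcal{B}_w(\mu)$ and is connected to $\pi^\lambda \otimes \pi$ inside $\mathcal{B}_{s_i v}(\lambda) \otimes \mathcal{B}_w(\mu)$, the connecting path may stray outside the smaller set. Resolving this requires showing that every excursion into $\mathcal{B}_{s_i v}(\lambda) \setminus \mathcal{B}_v(\lambda)$ happens only along the $i$-direction via $\tilde{f}_i$ on the first factor, so that the detour forms a loop which, because both endpoints lie in $\mathcal{B}_v(\lambda) \otimes \mathcal{B}_w(\mu)$, can be contracted to a path inside the smaller set. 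The Lakshmibai--Seshadri path realisation recalled at the start of the section should allow this contraction to be carried out explicitly, likely by an induction on the number of excursions.
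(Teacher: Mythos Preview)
Your overall strategy is sound, and in fact more direct than the paper's route. However, the ``main obstacle'' you flag at the end is not an obstacle at all, and your proposed resolution via LS-path excursions is unnecessary. The point you are missing is this: both $\mathcal{B}_v(\lambda)\otimes\mathcal{B}_w(\mu)$ and $\mathcal{B}_{s_iv}(\lambda)\otimes\mathcal{B}_w(\mu)$ decompose into connected components indexed by the \emph{same} set $\mathcal{B}_w(\mu)^\lambda$, namely $\bigsqcup_{\pi'} C(\pi',v)$ and $\bigsqcup_{\pi'} C(\pi',s_iv)$ respectively, and you have already observed $C(\pi',v)\subseteq C(\pi',s_iv)$ for every $\pi'$. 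So once you know $\eta_0\in\mathcal{B}_v(\lambda)\otimes\mathcal{B}_w(\mu)$, you have $\eta_0\in C(\pi',v)\subseteq C(\pi',s_iv)$ for some $\pi'$; combined with $\eta_0\in C(\pi,s_iv)$ and the disjointness of the $C(\cdot,s_iv)$, this forces $\pi'=\pi$. No induction on excursions is needed. (For the $\supseteq$ direction you should also make explicit the reduction to the case $\tilde{e}_i(\xi_1\otimes\xi_2)=0$, by first raising along the $i$-string inside $C(\pi,v)$; then the excluded-set parametrisation matches up cleanly.)

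For comparison, the paper's argument is more elaborate. It introduces the auxiliary connected component $D(\pi,\text{\boldmath $i$})$ inside the generalized Demazure crystal $\mathcal{B}_{\text{\boldmath $i$},w,\lambda,\mu}=\bigcup_{a_1,\dots,a_l\ge 0}\tilde{f}_{i_1}^{a_1}\cdots\tilde{f}_{i_l}^{a_l}(\mathcal{B}_e(\lambda)\otimes\mathcal{B}_w(\mu))\setminus\{0\}$, and exploits the fact (from \cite{Magyar}) that such a component is isomorphic to a Demazure crystal, hence satisfies the string property. This lets the paper classify the possible relations among $S\subseteq\widetilde S\subseteq\widetilde S[i]$ and $S[i]$ for each $i$-string, and then peel off the excluded set in several steps. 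Your argument bypasses all of this: it uses only the string property of $\mathcal{B}_w(\mu)$, the tensor product rule, and the connected-component decomposition over $\lambda$-dominant paths. What the paper's approach buys is a finer stratification (the intermediate sets $\varDelta'(\pi,v,\text{\boldmath $i$},i)$ and $\widetilde C(\pi,v,i)$), which may be useful elsewhere; your approach is shorter and self-contained.
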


As an application of this theorem, one can prove the following relation, which is an analog of the Leibniz rule for Demazure operators. For $i \in I$ and $\lambda \in P^+$, the map $S_i : \mathcal{B}(\lambda) \to \mathcal{B}(\lambda)$ is defined as follows.
\begin{align*}
S_i : \mathcal{B}(\lambda) \to \mathcal{B}(\lambda), \ b \mapsto \left\{ \begin{array}{ll} \tilde{f}_i^{\langle \wt(b), \alpha_i^\vee \rangle}(b) & \text{if } \langle \wt(b), \alpha_i^\vee \rangle \geq 0, \\ \tilde{e}_i^{-\langle \wt(b), \alpha_i^\vee \rangle}(b) & \text{if } \langle \wt(b), \alpha_i^\vee \rangle \leq 0. \end{array} \right.
\end{align*}
Also, we define the map $\tilde{e}_i^{\max} : \mathcal{B}(\lambda) \to \mathcal{B}(\lambda)$ for $i \in I$ and $\lambda \in \mathcal{B}(\lambda)$ as $\tilde{e}_i^{\max} (b) := \tilde{e}_i^{\varepsilon_i(b)}(b)$ for $b \in \mathcal{B}(\lambda)$.

\begin{thm}\label{app2}
Let $\lambda, \mu$ be dominant integral weights, $v, w$ elements of $W$. Take $i \in I$ which satisfies $\ell(s_i v) > \ell (v)$. Then it holds that
\begin{align*}
& \ \bigcup_{n \geq 0} \tilde{f}_{i}^{n} (\mathcal{B}_{v}(\lambda) \otimes \mathcal{B}_{w}(\mu)) \setminus \{ 0 \} \\
 = & \ (\mathcal{B}_{s_i v}(\lambda) \otimes \mathcal{B}_{w}(\mu)) \sqcup (S_{i} (\tilde{e}_i^{\max}(\mathcal{B}_v(\lambda))) \otimes (\mathcal{B}_{s_i w}(\mu) \setminus \mathcal{B}_{w}(\mu))).
\end{align*}

\end{thm}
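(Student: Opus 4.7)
The plan is to decompose $\mathcal{B}_v(\lambda) \otimes \mathcal{B}_w(\mu)$ into its connected components and apply Theorem~\ref{mainthm2} componentwise. Because each Demazure crystal is $\tilde{e}_j$-stable, the tensor product rule implies that $\mathcal{B}_v(\lambda) \otimes \mathcal{B}_w(\mu)$ is also $\tilde{e}_j$-stable for every $j \in I$; hence it is a disjoint union of connected components, each with a unique highest weight element, and by Littelmann's description of the highest weight elements in a tensor product these elements are of the form $\pi^\lambda \otimes \pi$ with $\pi$ a $\lambda$-dominant path in $\mathcal{B}_w(\mu)$. Therefore
\begin{align*}
\mathcal{B}_v(\lambda) \otimes \mathcal{B}_w(\mu) = \bigsqcup_{\pi} C(\pi, v), \qquad \mathcal{B}_{s_i v}(\lambda) \otimes \mathcal{B}_w(\mu) = \bigsqcup_{\pi} C(\pi, s_i v),
\end{align*}
with $\pi$ ranging in both cases over the $\lambda$-dominant paths in $\mathcal{B}_w(\mu)$.

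Since the operation $B \mapsto \bigcup_{n \geq 0} \tilde{f}_i^n(B) \setminus \{0\}$ respects connected components and distributes over disjoint unions, combining the first displayed decomposition with Theorem~\ref{mainthm2} applied to each $C(\pi, v)$ yields
\begin{align*}
\bigcup_{n \geq 0} \tilde{f}_i^n(\mathcal{B}_v(\lambda) \otimes \mathcal{B}_w(\mu)) \setminus \{0\} = \bigsqcup_{\pi} C(\pi, s_i v) \sqcup \bigsqcup_{\pi} R(\pi, v),
\end{align*}
where $R(\pi, v)$ denotes the correction set subtracted in Theorem~\ref{mainthm2}. Using the second displayed decomposition, the first term on the right collapses to $\mathcal{B}_{s_i v}(\lambda) \otimes \mathcal{B}_w(\mu)$, and the theorem reduces to the set-theoretic identity
\begin{align*}
\bigsqcup_{\pi} R(\pi, v) = S_i(\tilde{e}_i^{\max}(\mathcal{B}_v(\lambda))) \otimes (\mathcal{B}_{s_i w}(\mu) \setminus \mathcal{B}_w(\mu)).
\end{align*}

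As $\pi$ varies, the constraint $\pi_1 \otimes \pi_2 \in C(\pi, v)$ loosens to $\pi_1 \otimes \pi_2 \in \mathcal{B}_v(\lambda) \otimes \mathcal{B}_w(\mu)$, so the pairs $(\pi_1, \pi_2)$ range independently. For the left-hand tensor factor, $\tilde{e}_i$-stability of $\mathcal{B}_v(\lambda)$ identifies $\{\pi_1 \in \mathcal{B}_v(\lambda) : \tilde{e}_i(\pi_1) = 0\}$ with $\tilde{e}_i^{\max}(\mathcal{B}_v(\lambda))$, while $\langle \wt(\pi_1), \alpha_i^\vee \rangle = \varphi_i(\pi_1)$ turns $\tilde{f}_i^{\langle \wt(\pi_1), \alpha_i^\vee\rangle}(\pi_1)$ into $S_i(\pi_1)$. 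The main obstacle is the right-hand factor: one must show
\begin{align*}
\{\, \tilde{f}_i^b(\pi_2) : \pi_2 \in \mathcal{B}_w(\mu),\ \tilde{f}_i(\pi_2) \notin \mathcal{B}_w(\mu) \sqcup \{0\},\ 1 \leq b \leq \langle \wt(\pi_2), \alpha_i^\vee \rangle \,\} = \mathcal{B}_{s_i w}(\mu) \setminus \mathcal{B}_w(\mu).
\end{align*}
A priori the upper bound $\langle \wt(\pi_2), \alpha_i^\vee\rangle = \varphi_i(\pi_2) - \varepsilon_i(\pi_2)$ is strictly smaller than the $\varphi_i(\pi_2)$ needed to exhaust the part of the $i$-string below $\pi_2$. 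The resolution is the Kashiwara--Joseph characterization of Demazure crystals: for each $i \in I$, every $i$-string in $\mathcal{B}(\mu)$ meets $\mathcal{B}_w(\mu)$ either in the whole string or in the $i$-highest element alone. The condition $\tilde{f}_i(\pi_2) \notin \mathcal{B}_w(\mu) \sqcup \{0\}$ rules out the first alternative, forcing $\varepsilon_i(\pi_2) = 0$; the two bounds then coincide, and the elements $\tilde{f}_i^b(\pi_2)$ sweep out $\mathcal{B}_{s_i w}(\mu) \setminus \mathcal{B}_w(\mu)$ exactly. Disjointness of the two summands on the right-hand side of the theorem is immediate because their right-hand tensor factors lie in $\mathcal{B}_w(\mu)$ and in its complement within $\mathcal{B}_{s_i w}(\mu)$ respectively.
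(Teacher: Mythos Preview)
Your proposal is correct and follows essentially the same route as the paper: decompose $\mathcal{B}_v(\lambda)\otimes\mathcal{B}_w(\mu)$ into the connected components $C(\pi,v)$, apply Theorem~\ref{mainthm2} to each, recombine, and then identify the union of the correction sets with $S_i(\tilde{e}_i^{\max}(\mathcal{B}_v(\lambda)))\otimes(\mathcal{B}_{s_iw}(\mu)\setminus\mathcal{B}_w(\mu))$ by factoring the conditions on $\pi_1$ and $\pi_2$ and invoking the string property (Lemma~\ref{string}) to force $\varepsilon_i(\pi_2)=0$. The paper additionally separates out the case $\ell(s_iw)<\ell(w)$ explicitly (both sides of the second-factor identity are then empty by Lemma~\ref{f(Dem)}(2)) and spells out the reverse inclusion for the second factor via the string parametrization of an arbitrary $\xi\in\mathcal{B}_{s_iw}(\mu)\setminus\mathcal{B}_w(\mu)$, but these are just details you have compressed rather than anything genuinely different.
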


This paper is organized as follows. In Section \ref{quantum_group}, we recall some basic facts about quantum groups, their representations, and crystal bases. In Section \ref{Weyl}, we give some properties of Weyl groups. In Section \ref{crystal}, we recall the definition of crystals and review the definition and properties of Demazure crystals. In Section \ref{LS}, we introduce Lakshmibai-Seshadri paths. In Section \ref{mainsec}, we give some examples of tensor products of Demazure crystals, then prove Theorem \ref{mainthm}. In section \ref{recursion}, we consider Theorem \ref{mainthm2}, which is a recursive formula describing connected components of tensor products of Demazure crystals. In section \ref{application}, we give applications of our theorems, first, we discuss Theorem \ref{app1} and the key positivity problem, and next we consider Theorem \ref{app2}, which is an analog of the Leibniz rule for Demazure operators.

\subsection*{Acknowledgments} The author is deeply indebted to Satoshi Naito for numerous helpful suggestions and in-depth discussions. He would like to thank Daisuke Sagaki for many useful comments. He is grateful to Fumihiko Nomoto, Naoki Fujita, and Hideya Watanabe for various valuable pieces of advice.

\section{Quantum Groups and Their Representations}\label{quantum_group}
In this section, we recall some basic facts about quantum groups and their representation theory. For details, see \cite{Jantzen}.

\subsection{Quantum groups}
Let $\mathfrak{g}$ be a finite dimensional simple Lie algebra over $\mathbb{C}$, $\mathfrak{h}$ the Cartan subalgebra of $\mathfrak{g}$, $\Phi$ the root system of $\mathfrak{g}$, $\Phi^+$ the set of positive roots, $\Pi = \{ \alpha_i \}_{i \in I}$ the set of simple roots, $\Pi^\vee = \{ \alpha_i^\vee \}_{i \in I}$ the set of simple coroots, $s_i$ the simple reflection corresponding to $\alpha_i$, $i \in I$, $W := \langle s_i \ | \ i \in I \rangle$ the Weyl group of $\mathfrak{g}$. Then there is a $W$-invariant inner product $( \ , \ )$ on the vector space over $\mathbb{R}$ generated by $\Phi$ such that $(\alpha, \alpha) = 2$ for all short roots $\alpha \in \Phi$. Also, let $P$ be the weight lattice of $\mathfrak{g}$, $P^+$ the set of dominant integral weights, $\mathbb{Z}\Phi$ the root lattice of $\mathfrak{g}$. We set $k := \mathbb{Q}(q)$ be the field of rational functions over $\mathbb{Q}$ in a variable $q$.

For $i \in I$ and $a \in \mathbb{Z}$, we set $[a]_i := (q_i^a - q_i^{-a})/(q_i - q_i^{-1})$, where $q_i := q^{(\alpha_i, \alpha_i)/2}$. Then we can define the $q$-binomial coefficients by 
\begin{align*}
\begin{bmatrix} a \\ n \end{bmatrix}_i := \frac{[a]_i [a-1]_i \cdots [a-n+1]_i}{[n]_i [n-1]_i \cdots [1]_i}
\end{align*}
for $i \in I$ and $a, n \in \mathbb{Z}$ with $n >0$; we set
\begin{align*}
\begin{bmatrix} a \\ 0 \end{bmatrix}_i := 1
\end{align*}
for $i \in I$ and $a \in \mathbb{Z}$. In addition, we can define the $q$-analog of the factorial for all $n \in \mathbb{Z}$ with $n > 0$ by
\begin{align*}
[n]!_i := [n]_i [n-1]_i \cdots [1]_i
\end{align*}
for $i \in I$; we set $[0]!_i := 1$.

\begin{defn}[{\cite[Section 4.3]{Jantzen}}]
The {\it quantum group} $U_q(\mathfrak{g})$ is the associative algebra over $k$ with generators $E_i, F_i, K_i, K_i^{-1}$, $i \in I$, and relations
\begin{align*}
\begin{array}{ll}
K_i K_i^{-1} = K_i^{-1} K_i = 1 & (i \in I), \\ \\
K_i K_j = K_j K_i & (i, j \in I), \\ \\
K_i E_j K_i^{-1} = q^{(\alpha_i, \alpha_j)} E_j & (i, j \in I), \\ \\
K_i F_j K_i^{-1} = q^{-(\alpha_i, \alpha_j)} F_j & (i, j \in I), \\ \\
E_i F_j - F_j E_i = \delta_{ij} \dfrac{K_i - K_i^{-1}}{q_i - q_i^{-1}} & (i, j \in I), \\ \\
\displaystyle\sum_{s = 0}^{1-a_{ij}} (-1)^s \begin{bmatrix} 1-a_{ij} \\ s \end{bmatrix}_i E_i^{1-a_{ij}-s} E_j E_i^s = 0 & (i, j \in I), \\ \\
\displaystyle\sum_{s = 0}^{1-a_{ij}} (-1)^s \begin{bmatrix} 1-a_{ij} \\ s \end{bmatrix}_i F_i^{1-a_{ij}-s} F_j F_i^s = 0 & (i, j \in I),
\end{array}
\end{align*}
where $\delta_{ij}$ is the Kronecker delta, and $a_{ij} = 2(\alpha_i, \alpha_j)/(\alpha_i, \alpha_i)$ is the $(i, j)$-entry of the Cartan matrix of $\mathfrak{g}$.
\end{defn}

For each $i \in I$, the subalgebra of $U_q(\mathfrak{g})$ generated by $E_i, F_i, K_i, K_i^{-1}$ is isomorphic to $U_{q_i}(\mathfrak{sl}_2)$; we denote this algebra by $U^i$.

\subsection{Representations of quantum groups}
In this subsection, we will survey the representation theory of quantum groups.

Let $M$ be a finite dimensional $U_q(\mathfrak{g})$-module. For $\lambda \in P$, we set $M_\lambda := \{ m \in M \ | \ K_i m = q^{(\lambda, \alpha_i)}m \text{ for all } i \in I \}$. We call $M$ {\it type {\boldmath $1$}} if $M = \bigoplus_{\lambda \in P} M_\lambda$ holds. From now on, finite dimensional $U_q(\mathfrak{g})$-modules are assumed to be of type {\boldmath $1$}.

\begin{defn}[{\cite[Section 5.5]{Jantzen}}]
For $\lambda \in P$, we define the {\it Verma module} $M(\lambda)$ by
\begin{align*}
M(\lambda) := U_q(\mathfrak{g}) \left/ \left( \sum_{i \in I} U_q(\mathfrak{g}) \left( K_i - q^{(\lambda, \alpha_i)} \right) + \sum_{i \in I} U_q(\mathfrak{g}) E_i \right) \right. .
\end{align*}
\end{defn}

The Verma module $M(\lambda)$ has a unique irreducible quotient. We denote this module by $L(\lambda)$.

\begin{thm}[{\cite[Theorem 5.10]{Jantzen}}]\label{fdmod}
For $\lambda \in P^+$, $L(\lambda)$ is finite dimensional. Conversely, each finite dimensional irreducible $U_q(\mathfrak{g})$-module is isomorphic to $L(\lambda)$ for some $\lambda \in P^+$.
\end{thm}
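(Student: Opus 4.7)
The plan is to prove the two directions separately, following the classical strategy for the analogous result in the enveloping algebra setting, with the modifications needed to handle the $q$-deformed relations.

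\textbf{First direction: $L(\lambda)$ is finite dimensional for $\lambda \in P^+$.} I would begin with the canonical highest weight vector $v_\lambda \in M(\lambda)$ (the image of $1 \in U_q(\mathfrak{g})$), which satisfies $E_i v_\lambda = 0$ and $K_i v_\lambda = q^{(\lambda,\alpha_i)} v_\lambda$ for all $i \in I$. The key computation is to check that the vector $w_i := F_i^{\langle \lambda, \alpha_i^\vee \rangle + 1} v_\lambda$ generates a proper submodule, by verifying $E_j w_i = 0$ for every $j \in I$. When $j \neq i$ this follows from the commutation relation $E_j F_i = F_i E_j$, and when $j = i$ it reduces to a computation inside the rank-one subalgebra $U^i \cong U_{q_i}(\mathfrak{sl}_2)$ using the quantum analogue of $[E_i, F_i^{n+1}]$. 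Forming the quotient $\overline{M}(\lambda) := M(\lambda)\big/\sum_{i \in I} U_q(\mathfrak{g}) w_i$ gives a highest weight module on which each $F_i$, and hence via the $U^i$-representation theory each $E_i$, acts locally nilpotently. A standard Weyl-group-invariance argument for characters of integrable highest weight modules then shows that $\mathrm{ch}(\overline{M}(\lambda))$ is $W$-invariant and supported on $(W\lambda - \mathbb{Z}_{\geq 0}\Pi) \cap \mathrm{conv}(W\lambda)$, which is a finite set. Combined with finite-dimensionality of each weight space (from a PBW-type basis theorem for $U_q(\mathfrak{g})$), this forces $\overline{M}(\lambda)$ to be finite dimensional, and therefore so is its irreducible quotient $L(\lambda)$.

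\textbf{Second direction: every finite dimensional irreducible type $\boldsymbol{1}$ module is $L(\lambda)$ for some $\lambda \in P^+$.} Let $M$ be such a module. Since $M = \bigoplus_{\mu \in P} M_\mu$ is finite dimensional, one can pick a weight $\lambda$ maximal in the dominance order on $P$; any $0 \neq v \in M_\lambda$ then satisfies $E_i v = 0$ for all $i \in I$ by maximality. Irreducibility forces $M = U_q(\mathfrak{g}) v$, which gives a surjection $M(\lambda) \twoheadrightarrow M$ that must factor through the unique irreducible quotient $L(\lambda)$. To confirm $\lambda \in P^+$, I would restrict to the subalgebra $U^i \cong U_{q_i}(\mathfrak{sl}_2)$: the finite-dimensional type $\boldsymbol{1}$ classification for quantum $\mathfrak{sl}_2$ (weights appearing in symmetric integer strings about zero) applied to the $U^i$-submodule generated by $v$ forces $\langle \lambda, \alpha_i^\vee \rangle \in \mathbb{Z}_{\geq 0}$ for every $i \in I$, whence $\lambda \in P^+$.

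\textbf{Main obstacle.} The substantive direction is the first one. Two points require care: showing $\overline{M}(\lambda) \neq 0$, which follows because $\sum_i U_q(\mathfrak{g}) w_i$ is contained in the sum of weight spaces of weight strictly less than $\lambda$; and upgrading local nilpotency of the $F_i$ to $W$-invariance of the character, which proceeds by decomposing $\overline{M}(\lambda)$ as a $U^i$-module and invoking complete reducibility for finite dimensional type $\boldsymbol{1}$ modules over $U_{q_i}(\mathfrak{sl}_2)$. Both arguments are developed in full in Jantzen's book, so in the proof I would invoke the relevant results there rather than redo the $q$-commutator calculations.
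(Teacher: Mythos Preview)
The paper does not give its own proof of this statement: it is quoted as a background result with the citation \cite[Theorem 5.10]{Jantzen} and no argument is supplied. So there is nothing in the paper to compare your proposal against.

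That said, your sketch is essentially the standard proof one finds in Jantzen's book, and it is correct in outline. The first direction via the integrable quotient $\overline{M}(\lambda)$ and $W$-invariance of the character, and the second direction via a maximal weight and restriction to $U^i$, are exactly the classical arguments adapted to the quantum setting. Since the paper simply defers to \cite{Jantzen} for this result, your proposal is in fact more detailed than what the paper provides; in the context of this paper you could replace the entire argument with a one-line reference, as the author does.
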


It is known that every finite dimensional $U_q(\mathfrak{g})$-module decomposes into a direct sum of finite dimensional irreducible $U_q(\mathfrak{g})$-modules. Namely, we know the following.

\begin{thm}[{\cite[Theorem 5.17]{Jantzen}}]\label{ss}
The category of finite dimensional $U_q(\mathfrak{g})$-modules is semisimple.
\end{thm}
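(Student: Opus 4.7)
The theorem is a quantum analogue of Weyl's complete reducibility theorem, and the standard route goes through central characters. The plan is to show that every short exact sequence of finite dimensional type-$\mathbf{1}$ $U_q(\mathfrak{g})$-modules splits, by treating separately the cases in which adjacent composition factors have distinct versus coincident central characters.

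First, I would use Theorem \ref{fdmod} together with the weight-space decomposition to describe composition factors. Given a finite dimensional type-$\mathbf{1}$ module $M$, choose a weight $\lambda$ that is maximal among the weights of $M$ for the partial order $\mu \leq \lambda \Leftrightarrow \lambda - \mu \in \sum_{i \in I} \mathbb{Z}_{\geq 0}\alpha_i$. Then any nonzero vector in $M_\lambda$ is killed by every $E_i$, so $\lambda \in P^+$ and the submodule it generates is a quotient of the Verma module $M(\lambda)$; since it is finite dimensional and irreducible quotients are unique, it is isomorphic to $L(\lambda)$. Iterating shows that $M$ admits a filtration whose successive quotients are of the form $L(\nu)$, $\nu \in P^+$.

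Next, I would construct a quantum Casimir element $C \in Z(U_q(\mathfrak{g}))$, for instance from the quasi-$R$-matrix or via the quantum Harish-Chandra isomorphism, with the property that $C$ acts on $L(\nu)$ by a scalar $c_\nu \in k$ satisfying $c_\lambda \neq c_\mu$ whenever $\lambda, \mu \in P^+$ are distinct with $\lambda - \mu \in \mathbb{Z}\Phi$. Decomposing $M$ into generalized eigenspaces for $C$ then splits $M$ as a direct sum indexed by central character, reducing the problem to modules whose composition factors all share a single highest weight $\lambda$.

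The final step is induction on $\dim M$ for such a module, reducing to a short exact sequence $0 \to N \to M \to L(\lambda) \to 0$ where $N$ is already a direct sum of copies of $L(\lambda)$. Since every composition factor of $M$ is $L(\lambda)$, the weight $\lambda$ is maximal among the weights of $M$, so any vector in $M_\lambda$ mapping onto a highest weight vector of the quotient is automatically annihilated by all $E_i$ and generates a submodule isomorphic to $L(\lambda)$ which splits the sequence. I expect the main technical obstacle to be the construction of the quantum Casimir and the verification that its eigenvalues genuinely separate distinct dominant weights inside the same $\mathbb{Z}\Phi$-coset; one has to rule out accidental $q$-scalar coincidences at generic $q$ even when the corresponding classical eigenvalues might degenerate, and this is what forces the appeal to $R$-matrix or Harish-Chandra technology rather than a direct computation.
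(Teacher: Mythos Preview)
The paper does not prove this theorem at all: it is stated as a background fact with a citation to \cite[Theorem 5.17]{Jantzen}, and no argument is given. So there is no ``paper's own proof'' to compare your proposal against.

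That said, your sketch is essentially the standard argument one finds in the cited reference. Jantzen's proof in Chapter~5 likewise reduces to splitting short exact sequences and uses a central element (the quantum Casimir, constructed there via an explicit formula rather than through the $R$-matrix) to separate blocks with distinct highest weights, then handles the self-extension case $0 \to L(\lambda) \to M \to L(\lambda) \to 0$ by a highest-weight-vector argument. One small point you gloss over in the last step: when you pick $v \in M_\lambda$ mapping onto a highest weight vector of the quotient and claim $U_q(\mathfrak{g})\cdot v \cong L(\lambda)$, you should rule out the possibility that $U_q(\mathfrak{g})\cdot v = M$. This follows because $U_q(\mathfrak{g})\cdot v$ is a quotient of the Verma module $M(\lambda)$, whose $\lambda$-weight space is one-dimensional, whereas $\dim M_\lambda = 2$; but it is worth saying explicitly. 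Your remark about needing care with the Casimir eigenvalues is on target: the separation of scalars $c_\lambda \neq c_\mu$ for distinct $\lambda,\mu \in P^+$ with $\lambda - \mu \in \mathbb{Z}\Phi$ is where the genericity of $q$ (transcendence over $\mathbb{Q}$) is actually used.
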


Take $\lambda \in P^+$ arbitrarily. There is a vector $v_\lambda$ in $L(\lambda)_\lambda \setminus \{ 0 \}$ such that $E_i v_\lambda$ is equal to $0$ for all $i \in I$. We call this $v_\lambda$ a {\it highest weight vector} of $L(\lambda)$. It is known that $L(\lambda)$ is generated by $v_\lambda$ over $U_q(\mathfrak{g})$.

\subsection{Crystal bases}
In the previous subsection, we introduced the finite dimensional irreducible $U_q(\mathfrak{g})$-module $L(\lambda)$ for $\lambda \in P^+$. In fact, these modules have certain good bases, called crystal bases. In this subsection, we construct these bases.

Let $M$ be a finite dimensional $U_q(\mathfrak{g})$-module. Take $i \in I$. Recall that every finite dimensional $U_{q_i}(\mathfrak{sl}_2)$-module decomposes into a direct sum of finite dimensional irreducible $U_{q_i}(\mathfrak{sl}_2)$-modules by Theorem \ref{ss}. Therefore, by considering $M$ as a $U^i$-module, we see that for $\lambda \in P$, every $x \in M_\lambda$ can be uniquely written as:
\begin{align*}
x = \sum_{j \geq 0, \ j \geq -\langle \lambda, \alpha_i^\vee \rangle} \frac{1}{[j]!_i}F_i^j x_j,
\end{align*}
where $x_j \in M_{\lambda + j \alpha_i}$ and $E_i x_j = 0$ for all $j$. Then we define the operators $\tilde{f}_i$ and $\tilde{e}_i$ by
\begin{align*}
\tilde{f}_i (x) &:= \sum_{j \geq 0, \ j \geq -\langle \lambda, \alpha_i^\vee \rangle} \frac{1}{[j+1]!_i}F_i^{j+1} x_j, \\
\tilde{e}_i (x) &:= \sum_{j > 0, \ j \geq -\langle \lambda, \alpha_i^\vee \rangle} \frac{1}{[j-1]!_i}F_i^{j-1} x_j
\end{align*}
for $\lambda \in P$ and $x \in M_\lambda$, and extend this by linearity. We call $\tilde{f}_i$ the {\it lowering Kashiwara operator}, and $\tilde{e}_i$ the {\it raising Kashiwara operator}.

We define a ring $A$ by
\begin{align*}
A := \mathbb{Q}[q]_{(q)} = \left\{ \left. \frac{f}{g} \ \right| \ f, g \in \mathbb{Q}[q], \ g(0) \not= 0 \right\}.
\end{align*}

\begin{defn}[{\cite[Definition 9.3]{Jantzen}}]
Let $M$ be a finite dimensional $U_q(\mathfrak{g})$-module. $A$-submodule $\mathcal{M}$ of $M$ is called an {\it admissible lattice} if $\mathcal{M}$ satisfies the following conditions.
\begin{enumerate}
\item $\mathcal{M}$ is finitely generated over $A$ and $\mathcal{M} \otimes_A k$ is isomorphic to $M$ as $k$-vector spaces by the multiplication map.
\item $\mathcal{M}$ has the weight space decomposition, that is, we have $\mathcal{M} = \bigoplus_{\lambda \in P} \mathcal{M}_\lambda$, where $\mathcal{M}_\lambda := \mathcal{M} \cap M_\lambda$ for $\lambda \in P$.
\item $\mathcal{M}$ is stable under Kashiwara operators, that is, for all $i \in I$, $\tilde{f}_i (\mathcal{M})$ and $\tilde{e}_i (\mathcal{M})$ are contained in $\mathcal{M}$.
\end{enumerate}
\end{defn}

Kashiwara operators $\tilde{f}_i$ and $\tilde{e}_i$, $i \in I$, on $M$ induce operators on $\mathcal{M}/q\mathcal{M}$. We also denote these operators by $\tilde{f}_i$ and $\tilde{e}_i$ for $i \in I$.

\begin{defn}[{\cite[Definition 9.4]{Jantzen}}]
Let $M$ be a finite dimensional $U_q(\mathfrak{g})$-module. A pair $(\mathcal{M}, \mathcal{B})$, where $\mathcal{M}$ is an admissible lattice of $M$ and $\mathcal{B}$ is a basis of the $\mathbb{Q}$-vector space $\mathcal{M}/q\mathcal{M}$, is called a {\it crystal basis} if $(\mathcal{M}, \mathcal{B})$ satisfies the following conditions.
\begin{enumerate}
\item $\mathcal{B}$ is compatible with the weight space decomposition of $\mathcal{M}$, that is, $\mathcal{B} = \bigsqcup_{\lambda \in P} \mathcal{B}_\lambda$ holds where $\mathcal{B}_\lambda := \mathcal{B} \cap (\mathcal{M}_\lambda/q\mathcal{M}_\lambda)$.
\item $\tilde{f}_i (\mathcal{B}) \subset \mathcal{B} \sqcup \{ 0 \}$ and $\tilde{e}_i (\mathcal{B}) \subset \mathcal{B} \sqcup \{ 0 \}$ for all $i \in I$.
\item For all $b, b^\prime \in \mathcal{B}$ and $i \in I$, we have $b^\prime = \tilde{f}_i (b) \Leftrightarrow b = \tilde{e}_i (b^\prime)$.
\end{enumerate}
\end{defn}

Now we construct the crystal basis of $L(\lambda)$ for $\lambda \in P^+$. Set
\begin{align*}
\mathcal{L}(\lambda) := \sum_{\substack{r \geq 0 \\ i_1, \ldots, i_r \in I}} A\tilde{f}_{i_1} \cdots \tilde{f}_{i_r}(v_\lambda).
\end{align*}
We set $b_\lambda := v_\lambda + q\mathcal{L}(\lambda) \in \mathcal{L}/q\mathcal{L}$. Then we define $\mathcal{B}(\lambda)$ by
\begin{align*}
\mathcal{B}(\lambda) := \left\{ \left. \tilde{f}_{i_1} \cdots \tilde{f}_{i_r} (b_\lambda) \ \right| \ r \geq 0 , \ i_1, \ldots, i_r \in I \right\} \setminus \{ 0 \}.
\end{align*}
Note that we can define operators $\tilde{f}_i$, $i \in I$, on $\mathcal{L}/q\mathcal{L}$ by $\tilde{f}_{i} (m + q\mathcal{L}(\lambda)) := \tilde{f}_i(m) + q\mathcal{L}(\lambda)$ for $m + q\mathcal{L}(\lambda) \in \mathcal{L}(\lambda)/q\mathcal{L}(\lambda)$.

Then we know the following theorem.
\begin{thm}[{\cite[Theorem 9.25]{Jantzen}}]
For all $\lambda \in P^+$, the pair $(\mathcal{L}(\lambda), \mathcal{B}(\lambda))$ is a crystal basis of $L(\lambda)$.
\end{thm}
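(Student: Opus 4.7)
My plan is to verify, one by one, the three axioms in the definition of a crystal basis. The first axiom (compatibility with the weight decomposition) is automatic once one observes that $\tilde{f}_i$ and $\tilde{e}_i$ are weight-homogeneous of weights $-\alpha_i$ and $+\alpha_i$ respectively, so that $\mathcal{L}(\lambda) = \bigoplus_\mu \mathcal{L}(\lambda)_\mu$ with $\mathcal{L}(\lambda)_\mu = \mathcal{L}(\lambda) \cap L(\lambda)_\mu$. Stability of $\mathcal{B}(\lambda)$ under $\tilde{f}_i$ is built into its definition; the real content is the stability under $\tilde{e}_i$, the fact that $\mathcal{B}(\lambda)$ is genuinely a basis of $\mathcal{L}(\lambda)/q\mathcal{L}(\lambda)$ (so that the generating monomials carry no hidden relations modulo $q$), and the mutual inverse property (axiom (3)).

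To establish these, I would follow Kashiwara's grand-loop strategy: proceed by induction on $\mathrm{ht}(\lambda - \mu)$ and simultaneously prove, weight by weight, that (P1) $\mathcal{L}(\lambda)_\mu$ is $A$-spanned by the monomials $\tilde{f}_{i_1}\cdots \tilde{f}_{i_r}(v_\lambda)$ of weight $\mu$, that (P2) the nonzero classes of these monomials in $\mathcal{L}(\lambda)_\mu/q\mathcal{L}(\lambda)_\mu$ are linearly independent, and that (P3) for every $i \in I$, $\tilde{e}_i$ and $\tilde{f}_i$ induce mutually inverse partial bijections on $\mathcal{B}(\lambda)$. The base case $\mu = \lambda$ is trivial. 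For the inductive step the essential tool is the $U^i$-decomposition recalled in the excerpt: any weight vector $x$ can be written uniquely as $\sum_j (1/[j]!_i) F_i^j x_j$ with $E_i x_j = 0$, from which both $\tilde{f}_i$ and $\tilde{e}_i$ are read off as index shifts on $j$. This furnishes clean $U_{q_i}(\mathfrak{sl}_2)$-theoretic control along any single $i$-string, which in turn handles the rank-one version of (P2) and (P3).

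The principal obstacle is the cross-interaction between Kashiwara operators for different simple roots: it is far from evident that $\tilde{e}_i\bigl(\tilde{f}_{i_1}\cdots \tilde{f}_{i_r}(b_\lambda)\bigr)$ is again a monomial of the required form modulo $q\mathcal{L}(\lambda)$ when $i \neq i_1$. The standard route is to first prove that crystal bases behave well under tensor products, namely that the tensor product of two crystal bases is a crystal basis of the tensor product, with an explicit combinatorial rule governing $\tilde{e}_i, \tilde{f}_i$ on simple tensors; one then realizes $L(\lambda)$ as a direct summand of a tensor product of finite-dimensional irreducibles with fundamental highest weights, for which crystal bases can be constructed explicitly. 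The desired properties of $(\mathcal{L}(\lambda), \mathcal{B}(\lambda))$ then descend from the tensor product along the embedding that sends the highest weight vector of $L(\lambda)$ to the tensor product of highest weight vectors. Verifying that this embedding intertwines the Kashiwara operators rests ultimately on the Serre relations and on the quantum divided-power commutation identities for $E_i$ and $F_j$; this compatibility check, which closes the grand loop, is the technical heart of the argument.
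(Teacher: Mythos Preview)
The paper does not prove this statement at all: it is quoted as a background result from Jantzen's book (Theorem 9.25) and no argument is given. There is therefore no ``paper's proof'' to compare your proposal against.

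As for your sketch itself: the invocation of Kashiwara's grand loop is the right headline, but the logical order you describe is off. You propose to \emph{first} establish the tensor product theorem, \emph{then} construct crystal bases of the fundamental representations explicitly, and \emph{finally} deduce the general case by embedding $L(\lambda)$ into a tensor product of fundamentals. As written this is circular: the tensor product theorem (``the tensor product of two crystal bases is a crystal basis'') presupposes that the factors already have crystal bases, and for a general simple $\mathfrak{g}$ one does not have explicit crystal bases of the fundamental modules available in advance of the grand loop. What the grand loop actually does is prove a long list of statements \emph{simultaneously} by induction on the height $\mathrm{ht}(\lambda-\mu)$: existence of $(\mathcal{L}(\lambda),\mathcal{B}(\lambda))$ in each weight space, stability under $\tilde{e}_i$, the mutual-inverse property, the tensor product rule on the lattices and bases constructed so far, and compatibility with the embeddings $L(\lambda)\hookrightarrow L(\varpi_i)\otimes L(\lambda-\varpi_i)$. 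The tensor product statement is one of the inductive hypotheses, not a lemma proved beforehand. Your paragraphs (P1)--(P3) capture the right targets, and your identification of the ``cross-interaction'' $\tilde{e}_i\tilde{f}_{i_1}\cdots\tilde{f}_{i_r}(b_\lambda)$ as the crux is accurate; the resolution, however, is the intertwined induction just described rather than a prior reduction to fundamental weights.
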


Sometimes we call $\mathcal{B}(\lambda)$ the highest weight crystal for $\lambda \in P^+$.

\section{Basic Facts about Weyl Groups}\label{Weyl}
In this section, we recall some basic facts about Weyl groups. For details, see \cite{Humphreys}.

\subsection{The length function}
For $w \in W$, we can express $w$ as a product of simple reflections, that is, there exist $i_1, \ldots, i_l \in I$ such that $w = s_{i_1} \cdots s_{i_l}$. We call the product $s_{i_1} \cdots s_{i_l}$ an {\it expression} for $w$. If $w$ is written as $w = s_{i_1} \cdots s_{i_l}$ with $l$ minimal, then the product $s_{i_1} \cdots s_{i_l}$ is called a {\it reduced expression} for $w$, the sequence $(i_1, \ldots, i_l)$ is called a {\it reduced word} for $w$, and $l$ is called the {\it length} of $w$, denoted by $\ell(w)$. Note that a reduced expression for $w$ is not unique.

Also, for $w \in W$ we define $n(w) := \# (\Phi^+ \cap w^{-1}(-\Phi^+))$.

\begin{lem}[{\cite[Lemma 1.6]{Humphreys}}]\label{length1}
Let $k \in I$ and $w \in W$. Then the following holds.
\begin{enumerate}
\item $w\alpha_k \in \Phi^+ \Rightarrow n(ws_k) = n(w)+1$.
\item $w\alpha_k \in -\Phi^+ \Rightarrow n(ws_k) = n(w)-1$.
\item $w^{-1}\alpha_k \in \Phi^+ \Rightarrow n(s_kw) = n(w)+1$.
\item $w^{-1}\alpha_k \in -\Phi^+ \Rightarrow n(s_kw) = n(w)-1$.
\end{enumerate}
\end{lem}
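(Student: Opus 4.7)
The plan is to analyze how the inversion set $\Phi(w) := \Phi^+ \cap w^{-1}(-\Phi^+)$ transforms under right multiplication by a simple reflection, exploiting the classical fact that $s_k$ permutes $\Phi^+ \setminus \{ \alpha_k \}$ while sending $\alpha_k$ to $-\alpha_k$. Note that $n(w) = |\Phi(w)|$ by definition.

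First I would establish assertions (1) and (2) simultaneously by comparing $\Phi(w)$ with $\Phi(ws_k)$. A direct computation gives: $\alpha_k \in \Phi(ws_k)$ iff $ws_k(\alpha_k) = -w\alpha_k \in -\Phi^+$ iff $w\alpha_k \in \Phi^+$, so the status of $\alpha_k$ in $\Phi(ws_k)$ is the \emph{opposite} of its status in $\Phi(w)$. For any $\alpha \in \Phi^+ \setminus \{\alpha_k\}$, we have $s_k \alpha \in \Phi^+$, and $\alpha \in \Phi(ws_k)$ iff $w(s_k\alpha) \in -\Phi^+$ iff $s_k\alpha \in \Phi(w)$. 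Since $\alpha \mapsto s_k\alpha$ is an involution of $\Phi^+ \setminus \{\alpha_k\}$, the sets $\Phi(ws_k) \setminus \{\alpha_k\}$ and $\Phi(w) \setminus \{\alpha_k\}$ have the same cardinality. Writing
\begin{align*}
n(ws_k) = [w\alpha_k \in \Phi^+] + |\Phi(w) \setminus \{\alpha_k\}| = [w\alpha_k \in \Phi^+] + n(w) - [w\alpha_k \in -\Phi^+],
\end{align*}
parts (1) and (2) fall out by reading off the two cases.

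For (3) and (4) I would reduce to the previous case via the identity $n(w) = n(w^{-1})$. This identity follows from the bijection $\Phi(w) \to \Phi(w^{-1})$ given by $\alpha \mapsto -w\alpha$: indeed if $\alpha \in \Phi^+$ and $w\alpha \in -\Phi^+$ then $\beta := -w\alpha \in \Phi^+$ satisfies $w^{-1}\beta = -\alpha \in -\Phi^+$, and the map is clearly an involution between the two inversion sets. Hence $n(s_kw) = n((s_kw)^{-1}) = n(w^{-1}s_k)$, and applying (1) or (2) to $w^{-1}s_k$ according to the sign of $w^{-1}\alpha_k$ yields (3) and (4) at once.

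I do not expect a real obstacle: once the observation that $s_k$ preserves $\Phi^+ \setminus \{\alpha_k\}$ is in hand, the rest is a careful bookkeeping of how $\alpha_k$ itself enters or leaves the inversion set. The only subtlety is making sure that the bijection $\alpha \mapsto s_k\alpha$ on $\Phi^+ \setminus \{\alpha_k\}$ is used in the correct direction when translating membership in $\Phi(ws_k)$ into membership in $\Phi(w)$, which is why I would state and verify that translation explicitly before counting.
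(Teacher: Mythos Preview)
Your argument is correct. The paper does not supply its own proof of this lemma; it merely cites \cite[Lemma 1.6]{Humphreys}, and what you have written is precisely the standard inversion-set argument found there (analyze how $s_k$ acts on $\Phi^+ \setminus \{\alpha_k\}$ versus on $\alpha_k$, then use $n(w) = n(w^{-1})$ to reduce the left-multiplication statements to the right-multiplication ones).
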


\subsection{Deletion and exchange conditions}

First, we explain the deletion condition.
\begin{thm}[Deletion condition; {\cite[Theorem 1.7]{Humphreys}}]\label{del}
Let $w$ be an element of $W$. Let $w = s_{i_1} \cdots s_{i_l}$ be an arbitrary expression for $w$. If $n(w) < l$, then there exist indices $j, k \in \{1, \ldots, l\}$ with $j < k$ such that $w = s_{i_1} \cdots s_{i_{j-1}} s_{i_{j+1}} \cdots s_{i_{k-1}} s_{i_{k+1}} \cdots s_{i_l}$.
\end{thm}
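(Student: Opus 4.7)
The plan is to track, step by step along the given expression, how the statistic $n(\cdot)$ changes and to translate the first drop of $n$ into a concrete root-flip that yields the desired pair of deletable letters. Define the partial products $w_j := s_{i_1} \cdots s_{i_j}$ for $0 \leq j \leq l$, with $w_0 = e$ and $w_l = w$. By parts (1) and (2) of Lemma \ref{length1} applied to $w_{j-1}$, we have $n(w_j) = n(w_{j-1}) \pm 1$, the sign being $+$ if $w_{j-1}(\alpha_{i_j}) \in \Phi^+$ and $-$ if $w_{j-1}(\alpha_{i_j}) \in -\Phi^+$. Since $n(w_0) = 0$ and $n(w_l) = n(w) < l$, the sequence of values cannot be strictly increasing, so at least one descent occurs; choose the smallest index $k$ with $n(w_k) < n(w_{k-1})$. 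Then $w_{k-1}(\alpha_{i_k}) \in -\Phi^+$, and $w_{j-1}(\alpha_{i_j}) \in \Phi^+$ for every $j < k$. Note that $k \geq 2$, because $w_0(\alpha_{i_1}) = \alpha_{i_1} \in \Phi^+$.

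Set $\alpha := -w_{k-1}(\alpha_{i_k}) \in \Phi^+$, and consider the sequence $w_0^{-1}(\alpha), w_1^{-1}(\alpha), \ldots, w_{k-1}^{-1}(\alpha)$. Its first term is $\alpha$ (positive) and its last term is $-\alpha_{i_k}$ (negative), so there exists an index $j$ with $0 \leq j \leq k-2$ such that $w_j^{-1}(\alpha) \in \Phi^+$ and $w_{j+1}^{-1}(\alpha) = s_{i_{j+1}}(w_j^{-1}(\alpha)) \in -\Phi^+$. The standard fact that a simple reflection $s_i$ sends every positive root other than $\alpha_i$ to a positive root forces $w_j^{-1}(\alpha) = \alpha_{i_{j+1}}$, which, after unwinding the definition of $\alpha$, becomes
\begin{align*}
s_{i_{j+2}} s_{i_{j+3}} \cdots s_{i_{k-1}}(\alpha_{i_k}) = \alpha_{i_{j+1}}.
\end{align*}

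Applying the identity $s_{u(\beta)} = u s_\beta u^{-1}$ with $u = s_{i_{j+2}} \cdots s_{i_{k-1}}$ and $\beta = \alpha_{i_k}$, this equality of roots turns into the Weyl-group identity $s_{i_{j+1}} = (s_{i_{j+2}} \cdots s_{i_{k-1}})\, s_{i_k}\, (s_{i_{k-1}} \cdots s_{i_{j+2}})$, and right-multiplying by $s_{i_{j+2}} \cdots s_{i_k}$ collapses it to
\begin{align*}
s_{i_{j+1}} s_{i_{j+2}} \cdots s_{i_{k-1}} s_{i_k} = s_{i_{j+2}} \cdots s_{i_{k-1}}.
\end{align*}
Substituting this block into $w = s_{i_1} \cdots s_{i_l}$ removes precisely the factors at positions $j+1$ and $k$, giving the desired expression with the two indices $j+1 < k$ in $\{1, \ldots, l\}$.

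The main obstacle is pinpointing the pair of indices to delete: the descent index $k$ of the length sequence tells you where to delete on the right, but one still needs the root-tracking argument on $w_j^{-1}(\alpha)$ to locate the matching left index. Once the principle that $\alpha_i$ is the unique positive root sent negative by $s_i$ is invoked to convert the sign change into the equality $w_j^{-1}(\alpha) = \alpha_{i_{j+1}}$, the remainder is a routine conjugation computation.
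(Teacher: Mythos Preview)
Your proof is correct. The paper does not give its own proof of this statement; it simply records it as the Deletion Condition with a citation to \cite[Theorem~1.7]{Humphreys}. What you have written is essentially the standard argument found there: track $n(w_j)$ along the partial products, locate the first descent to find $k$, then trace the positive root $\alpha=-w_{k-1}(\alpha_{i_k})$ backwards through the $w_j^{-1}$ until its sign flips, using the fact that $s_i$ permutes $\Phi^+\setminus\{\alpha_i\}$ to pin down the matching index $j+1$. The conjugation identity $s_{u(\beta)}=us_\beta u^{-1}$ then converts the root equality into the word equality that deletes the two letters. One small remark: your phrase ``unwinding the definition of $\alpha$'' hides a single extra application of $s_{i_{j+1}}$ (from $s_{i_{j+1}}\cdots s_{i_{k-1}}(\alpha_{i_k})=-\alpha_{i_{j+1}}$ to $s_{i_{j+2}}\cdots s_{i_{k-1}}(\alpha_{i_k})=\alpha_{i_{j+1}}$), but this is routine and the conclusion stands.
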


As a corollary, we have the following.

\begin{cor}[{\cite[Section 1.6, 1.7]{Humphreys}}]\label{length2}
For $w \in W$, we have $\ell(w) = n(w)$.
\end{cor}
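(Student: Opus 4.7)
The plan is to establish the two inequalities $n(w) \leq \ell(w)$ and $\ell(w) \leq n(w)$ separately, then combine them. Both directions follow easily from what has already been proved in the excerpt.

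For the upper bound $n(w) \leq \ell(w)$, I would proceed by induction on the length of an arbitrary expression $w = s_{i_1} \cdots s_{i_l}$. The base case $w = e$ is trivial since $n(e) = \#\emptyset = 0$. For the inductive step, write $w = w' s_{i_l}$ where $w' = s_{i_1} \cdots s_{i_{l-1}}$, and apply Lemma \ref{length1}(1)(2) with $k = i_l$: regardless of whether $w' \alpha_{i_l} \in \Phi^+$ or $w' \alpha_{i_l} \in -\Phi^+$, we get $n(w) = n(w' s_{i_l}) \in \{ n(w')+1, n(w')-1 \}$, so in either case $n(w) \leq n(w') + 1 \leq (l-1) + 1 = l$ by the induction hypothesis. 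Taking a reduced expression gives $n(w) \leq \ell(w)$.

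For the lower bound $\ell(w) \leq n(w)$, I argue by contradiction: suppose $n(w) < \ell(w)$ for some $w \in W$, and fix a reduced expression $w = s_{i_1} \cdots s_{i_l}$ with $l = \ell(w)$. Since $n(w) < l$, the deletion condition (Theorem \ref{del}) yields indices $j < k$ such that
\begin{align*}
w = s_{i_1} \cdots s_{i_{j-1}} s_{i_{j+1}} \cdots s_{i_{k-1}} s_{i_{k+1}} \cdots s_{i_l},
\end{align*}
an expression of length $l - 2 < \ell(w)$, which contradicts the minimality defining $\ell(w)$. Hence $\ell(w) \leq n(w)$.

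Neither direction presents a serious obstacle: the key ingredients, namely Lemma \ref{length1} and the deletion condition, have already been stated, and the corollary is essentially a formal consequence. The only point demanding a little care is making sure the inductive step for the upper bound works for \emph{any} expression (not just reduced ones), so that applying it to a reduced expression yields the desired bound in terms of $\ell(w)$; this is automatic from the statement of Lemma \ref{length1} since it applies to arbitrary $w \in W$ and $k \in I$.
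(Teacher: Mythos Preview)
Your proof is correct: the upper bound $n(w) \leq \ell(w)$ follows cleanly from Lemma~\ref{length1} by induction on the length of an expression, and the lower bound $\ell(w) \leq n(w)$ is exactly the intended consequence of the deletion condition (Theorem~\ref{del}). The paper does not actually supply a proof of this corollary---it merely records the statement with a citation to Humphreys---so your argument fills in precisely the standard reasoning one finds there, and there is nothing to compare against beyond noting that your two-inequality structure is the canonical one.
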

By Theorem \ref{del}, if an expression $w=s_{i_1} \cdots w_{i_l}$ for $w \in W$ is not a reduced expression, then we can omit two $s_i$'s.

Now we describe the exchange condition.

\begin{thm}[Exchange condition; {\cite[Section 1.7]{Humphreys}}]
Let $w$ be an element of $W$. Fix a reduced expression $w = s_{i_1} \cdots s_{i_l}$. If $\ell (ws_k) < \ell(w)$ for $k \in I$, then there exists an index $j \in \{1, \ldots l \}$ such that $ws_k = s_{i_1} \cdots s_{i_{j-1}} s_{i_{j+1}} \cdots s_{i_l}$.
\end{thm}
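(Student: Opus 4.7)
My plan is to derive the exchange condition from the deletion condition (Theorem \ref{del}) together with Lemma \ref{length1} and Corollary \ref{length2}.

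First I would observe that, since $\ell(ws_k)<\ell(w)=l$, Lemma \ref{length1}(1) forces $w\alpha_k\in -\Phi^+$, and then part (2) of the same lemma together with Corollary \ref{length2} gives $\ell(ws_k)=n(ws_k)=n(w)-1=l-1$. Next, I would consider the (in general non-reduced) expression
\begin{align*}
ws_k = s_{i_1}\cdots s_{i_l}\,s_k
\end{align*}
of length $l+1$. Since $\ell(ws_k)=l-1<l+1=n(ws_k)+2$ (in particular $n(ws_k)<l+1$), the hypothesis of Theorem \ref{del} is satisfied, so we may omit two simple reflections from this expression while still representing $ws_k$.

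The second step is to argue that one of the two reflections which are deleted must be the final $s_k$. Suppose for contradiction that both deleted reflections sit among $s_{i_1},\ldots,s_{i_l}$, say at positions $j<j'$. Then
\begin{align*}
ws_k = s_{i_1}\cdots s_{i_{j-1}} s_{i_{j+1}}\cdots s_{i_{j'-1}} s_{i_{j'+1}}\cdots s_{i_l}\,s_k,
\end{align*}
and multiplying both sides on the right by $s_k$ yields an expression for $w$ of length $l-2$, contradicting the fact that $s_{i_1}\cdots s_{i_l}$ is a reduced expression for $w$ (so $\ell(w)=l$). Hence exactly one of the two omitted reflections is the trailing $s_k$, and the other is some $s_{i_j}$ with $1\le j\le l$. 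Deleting these two yields
\begin{align*}
ws_k = s_{i_1}\cdots s_{i_{j-1}} s_{i_{j+1}} \cdots s_{i_l},
\end{align*}
which is the desired conclusion.

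I expect essentially no real obstacle here: once the deletion condition is available, the only subtlety is pinning down which two reflections are deleted, and the reducedness of $s_{i_1}\cdots s_{i_l}$ immediately rules out the possibility that both come from the $s_{i_j}$'s. The step that requires a moment's care is the length count $\ell(ws_k)=l-1$, which uses Lemma \ref{length1} applied to $w$ and the simple root $\alpha_k$ rather than being immediate from the hypothesis $\ell(ws_k)<\ell(w)$.
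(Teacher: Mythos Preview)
The paper does not give its own proof of the exchange condition; it simply cites \cite[Section~1.7]{Humphreys}. Your argument via the deletion condition is correct and is consistent with the logical ordering the paper sets up (Lemma~\ref{length1}, then Theorem~\ref{del}, then Corollary~\ref{length2}, then the exchange condition). In Humphreys the dependence actually runs the other way---the exchange condition is established first via a direct root-system argument (tracking the root $w\alpha_k$ through the reduced word), and the deletion condition is deduced from it---so your route is the reverse of the cited source, but since the paper presents Theorem~\ref{del} and Corollary~\ref{length2} as already available, there is no circularity within the paper's framework. One cosmetic point: the parenthetical ``$l+1=n(ws_k)+2$'' is a little confusing; it would read more cleanly just to say that the expression $s_{i_1}\cdots s_{i_l}s_k$ has length $l+1>l-1=n(ws_k)$, so Theorem~\ref{del} applies.
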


As a corollary, we obtain the following assertion which we use in the proof of our main results.

\begin{cor}\label{keycor}
Let $w \in W$. We set $l = \ell(w)$.

\noindent (1) If $w \alpha_k \in -\Phi^+$ for $k \in I$, then there exists a reduced expression $w = s_{j_1} \cdots s_{j_l}$ such that $j_l = k$.

\noindent (2) If $w^{-1} \alpha_k \in -\Phi^+$ for $k \in I$, then there exists a reduced expression $w = s_{j_1} \cdots s_{j_l}$ such that $j_1 = k$.
\end{cor}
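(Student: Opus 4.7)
The plan is to reduce both parts to the exchange condition (or equivalently to Lemma \ref{length1} together with Corollary \ref{length2}), using the symmetry between $w$ and $w^{-1}$.

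For part (1), I would first observe that the hypothesis $w\alpha_k \in -\Phi^+$ yields, via Lemma \ref{length1}(2) and Corollary \ref{length2}, that $\ell(ws_k) = \ell(w) - 1 = l - 1$. Now fix any reduced expression $w = s_{i_1} \cdots s_{i_l}$. Since $\ell(ws_k) < \ell(w)$, the exchange condition produces an index $j \in \{1, \ldots, l\}$ with
\begin{align*}
ws_k = s_{i_1} \cdots s_{i_{j-1}} s_{i_{j+1}} \cdots s_{i_l}.
\end{align*}
Multiplying on the right by $s_k$ gives $w = s_{i_1} \cdots s_{i_{j-1}} s_{i_{j+1}} \cdots s_{i_l} s_k$, an expression of length $l = \ell(w)$, hence necessarily reduced, and its final factor is $s_k$. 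Setting $(j_1, \ldots, j_l) := (i_1, \ldots, \widehat{i_j}, \ldots, i_l, k)$ gives the required reduced expression.

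For part (2), I would pass to inverses. The hypothesis $w^{-1}\alpha_k \in -\Phi^+$ together with Lemma \ref{length1}(4) and Corollary \ref{length2} gives $\ell(s_k w) = \ell(w) - 1$. Equivalently, applying part (1) to $w^{-1}$ (noting that $\ell(w^{-1}) = \ell(w)$ and $(w^{-1})\alpha_k \in -\Phi^+$), we obtain a reduced expression $w^{-1} = s_{j_l} \cdots s_{j_1}$ whose last factor is $s_k$, i.e.\ $j_1 = k$. Taking the inverse yields a reduced expression $w = s_{j_1} \cdots s_{j_l}$ beginning with $s_k$.

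No real obstacle is anticipated: the only point to be careful about is justifying that the expressions obtained after removing or appending one factor are automatically reduced, which follows simply by comparing their lengths with $\ell(w)$ (already known to equal $l$). The content of the corollary is essentially a direct repackaging of the exchange condition combined with the length identity $\ell = n$ from Corollary \ref{length2}.
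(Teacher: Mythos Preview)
Your argument is correct; the paper itself does not spell out a proof of Corollary~\ref{keycor}, stating it only as an immediate corollary of the exchange condition, and your proposal is exactly the intended derivation.
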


\subsection{Parabolic subgroups and coset representatives}

For a subset $J$ of $I$, we define $W_J := \langle s_i \ | \ i \in J \rangle \subset W$.

\begin{defn}
A \textit{parabolic subgroup} of $W$ is a subgroup of $W$ of the form $W_J$ for some $J \subset I$.
\end{defn}

A parabolic subgroup of $W$ has minimal and maximal-length representatives.

\begin{lem}[{\cite[Section 1.10]{Humphreys}}]
(1) \ For a subset $J$ of $I$ and $w \in W$, the coset $wW_J$ has a unique element $\lfloor w \rfloor^J$ such that $\ell (\lfloor w \rfloor^J) \leq \ell(v)$ for all $v \in wW_J$. We call $\lfloor w \rfloor^J$ the \textit{minimal-length representative} for the coset $wW_J$.

\noindent (2) \ For a subset $J$ of $I$ and $w \in W$, the coset $wW_J$ has a unique element $\lceil w \rceil^J$ such that $\ell (\lceil w \rceil^J) \geq \ell(v)$ for all $v \in wW_J$. We call $\lceil w \rceil^J$ the \textit{maximal-length representative} for the coset $wW_J$.
\end{lem}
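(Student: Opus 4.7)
The plan is to prove the stronger factorization claim that every coset $wW_J$ decomposes as $\lfloor w \rfloor^J \cdot W_J$ with full length-additivity $\ell(\lfloor w \rfloor^J x) = \ell(\lfloor w \rfloor^J) + \ell(x)$ for all $x \in W_J$. Existence of a minimum-length element is immediate (the length function takes values in $\mathbb{Z}_{\geq 0}$). The length-additivity will then force uniqueness of the minimum-length representative in (1), and (2) will follow by multiplying $\lfloor w \rfloor^J$ on the right by the longest element of $W_J$, which exists because $\mathfrak{g}$ is of finite type, and hence $W_J$ is finite.

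The key step is to verify the positivity property: if $u \in wW_J$ is of minimum length, then $u\alpha_j \in \Phi^+$ for every $j \in J$. Indeed, if $u\alpha_j \in -\Phi^+$, Lemma 3.1(2) gives $n(us_j) = n(u) - 1$, so by Corollary 3.3, $\ell(us_j) < \ell(u)$, contradicting the minimality of $u$ in $uW_J = wW_J$. Given this, I would prove $\ell(ux) = \ell(u) + \ell(x)$ by induction on $\ell(x)$. Writing a reduced expression $x = x' s_j$ with $j \in J$, the induction hypothesis gives $\ell(ux') = \ell(u) + \ell(x')$, and Lemma 3.1(1) reduces the inductive step to checking $(ux')\alpha_j \in \Phi^+$. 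Since $\ell(x's_j) > \ell(x')$, the root $x'\alpha_j$ is a positive root lying in the subsystem spanned by $\{\alpha_i : i \in J\}$, so it is a non-negative integral combination of these $\alpha_i$; since $u\alpha_i \in \Phi^+$ for all $i \in J$ and the resulting combination $u(x'\alpha_j) = \sum c_i \, u\alpha_i$ with $c_i \geq 0$ is a sum of positive roots (which must then lie in $\Phi^+$ rather than collapse to zero, as $x'\alpha_j \neq 0$ and $u$ is invertible), the claim follows. Uniqueness of the minimum-length representative in (1) is then automatic: two such elements $u, u'$ in $wW_J$ satisfy $u' = ux$ for some $x \in W_J$, so $\ell(u') = \ell(u) + \ell(x)$, forcing $\ell(x) = 0$ and $x = e$. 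For (2), the longest element $w_0^J$ of $W_J$ satisfies $\ell(xw_0^J) \leq \ell(w_0^J)$ for every $x \in W_J$ (with equality iff $x = e$), and combining with length-additivity shows $\lceil w \rceil^J := \lfloor w \rfloor^J w_0^J$ is the unique maximum-length element.

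The main obstacle is the positivity argument $u(x'\alpha_j) \in \Phi^+$: one must know that $x'\alpha_j$ is not merely a positive root of $\Phi$ but actually a non-negative integral combination of the simple roots $\{\alpha_i : i \in J\}$. This is the standard fact that the Weyl group $W_J$ acts on the sub-root system $\Phi_J := \Phi \cap \mathrm{span}_{\mathbb{R}}\{\alpha_i : i \in J\}$ with simple system $\{\alpha_i : i \in J\}$, so every element of $\Phi_J \cap \Phi^+$ is a non-negative integral combination of these simple roots. I would either invoke this directly from \cite{Humphreys} or prove it inline by induction on $\ell(x')$ using the reflection formula $s_i \beta = \beta - \langle \beta, \alpha_i^\vee \rangle \alpha_i$. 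Once this positivity is secured, the remainder of the argument is bookkeeping with Lemma 3.1 and Corollary 3.3.
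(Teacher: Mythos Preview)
The paper does not supply its own proof of this lemma; it simply records the statement with a citation to \cite[Section 1.10]{Humphreys}. Your argument is correct and is precisely the standard proof found in that reference: characterize a minimum-length $u \in wW_J$ by $u\alpha_j \in \Phi^+$ for all $j \in J$, establish length-additivity $\ell(ux) = \ell(u) + \ell(x)$ for $x \in W_J$ (yielding uniqueness in (1)), and obtain (2) as $\lfloor w \rfloor^J w_0^J$ using finiteness of $W_J$.
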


For $\lambda \in P^+$, the stabilizer of $\lambda$ is defined by $W_\lambda := \{ w \in W \ | \ w\lambda = \lambda \}$; $W_\lambda$ is a parabolic subgroup of $W$ since $W_\lambda = \langle s_i \ | \ i \in I, \ s_i\lambda = \lambda \rangle$. Hence, for each $w \in W$, the coset $wW_\lambda$ has minimal and maximal-length representatives. We denote by $\lfloor w \rfloor^\lambda$ the minimal-length representative for $wW_\lambda$, and by $\lceil w \rceil^\lambda$ the maximal-length representative for $wW_\lambda$.

\section{Crystals, Tensor Products and Demazure Crystals}\label{crystal}
In this section, we review the definitions and some of the properties of crystals, tensor products of crystals, and Demazure crystals.

\subsection{The definitions of crystals and tensor products of crystals}
First of all, we give the definition of crystals. Crystal bases of finite dimensional $U_q(\mathfrak{g})$-modules are examples of crystals.

\begin{defn}[{\cite[Definition 4.5.1, Remark 4.2.4]{Hong}}]
(1) A set $\mathcal{B}$ (which does not contain $0$), equipped with maps $\mathrm{wt} : \mathcal{B} \rightarrow P$, $\tilde{f}_i, \tilde{e}_i : \mathcal{B} \rightarrow \mathcal{B} \sqcup \{ 0 \}$, $i \in I$, $\varphi_i, \varepsilon_i : \mathcal{B} \rightarrow \mathbb{Z}\cup \{ -\infty \}$, $i \in I$, is called a {\it crystal} if it satisfies the following conditions:
\begin{align*}
\begin{array}{lll}
\varphi_i(b) = \varepsilon_i(b) + \langle \wt(b), \alpha_i^\vee \rangle & (b \in \mathcal{B}, i \in I);\\
\tilde{e}_i(b) \in \mathcal{B} \Rightarrow \wt(\tilde{e}_i(b)) = \wt(b) + \alpha_i &  (b \in \mathcal{B}, i \in I);\\
\tilde{f}_i(b) \in \mathcal{B} \Rightarrow \wt(\tilde{f}_i(b)) = \wt(b) - \alpha_i & ( b \in \mathcal{B}, i \in I);\\
\tilde{e}_i(b) \in \mathcal{B} \Rightarrow \varepsilon_i(\tilde{e}_i(b)) = \varepsilon_i(b)-1, \ \varphi_i(\tilde{e}_i(b)) = \varphi_i(b)+1 & (b \in \mathcal{B}, i \in I);\\
\tilde{f}_i(b) \in \mathcal{B} \Rightarrow \varepsilon_i(\tilde{f}_i(b)) = \varepsilon_i(b)+1, \ \varphi_i(\tilde{f}_i(b)) = \varphi_i(b)-1 & (b \in \mathcal{B}, i \in I);\\
\tilde{f}_i(b) = b^\prime \Leftrightarrow b = \tilde{e}_i(b^\prime) & (b, b^\prime \in \mathcal{B}, i \in I);\\
\varphi_i(b) = -\infty \Rightarrow \tilde{e}_i(b) = \tilde{f}_i(b) = 0 & (b \in \mathcal{B}, i \in I).
\end{array}
\end{align*}

\noindent (2) Let $\mathcal{B}$ be a crystal. The {\it crystal graph} of $\mathcal{B}$ is an $I$-colored directed graph whose vertices are the elements of $\mathcal{B}$ and whose $I$-colored edges are defined by: $b \xrightarrow{i} b^\prime \Leftrightarrow b^\prime = \tilde{f}_i (b)$ for $b, b^\prime \in \mathcal{B}$. 

\noindent (3) Let $\mathcal{B}$ be a crystal. A {\it connected component} of $\mathcal{B}$ is the set of vertices of a connected component of the crystal graph of $\mathcal{B}$. 

\noindent (4) Let $\mathcal{B}$ be a crystal. The {\it character} $\ch(\mathcal{B})$ is defined by
\begin{align*}
\ch(\mathcal{B}) := \sum_{\lambda \in P}(\# \mathcal{B}_\lambda)e^{\lambda},
\end{align*}
where $\mathcal{B}_\lambda := \{ b \in \mathcal{B} \ | \ \wt (b) = \lambda \}$ for $\lambda \in P$, and $e^\lambda$, $\lambda \in P$, are the formal basis elements of the group algebra $\mathbb{Z}[P]$ of $P$ with multiplication defined by $e^\lambda e^\mu := e^{\lambda + \mu}$ for $\lambda, \mu \in P$.
\end{defn}

\begin{exm}
Let $\lambda$ be a dominant integral weight. We define maps $\wt : \mathcal{B}(\lambda) \rightarrow P$ and $\varphi_i, \varepsilon_i : \mathcal{B}(\lambda) \rightarrow \mathbb{Z}$, $i \in I$, by
\begin{align*}
\wt (b) &:= \mu \ \text{ if } b \in \mathcal{B}(\lambda)_\mu \text{ for } \mu \in P, \\
\varphi_i (b) &:= \max \{ n \geq 0 \ | \ \tilde{f}_i^n (b) \not= 0 \}, \\
\varepsilon_i (b) &:= \max \{ n \geq 0 \ \left| \ \tilde{e}_i^n (b) \not= 0 \right. \}.
\end{align*}
Then it is easy to verify that $\mathcal{B}(\lambda)$ is a crystal equipped with maps $\mathrm{wt}$, $\{ \tilde{f}_i \}_{i \in I}$, $\{ \tilde{e}_{i} \}_{i \in I}$, $\{ \varphi_i \}_{i \in I}$, $\{ \varepsilon_i \}_{i \in I}$.

Also, we define maps $\tilde{e}_i^{\max}, \tilde{f}_i^{\max} : \mathcal{B}(\lambda) \to \mathcal{B}(\lambda)$ for $i \in I$ as follows. For $b \in \mathcal{B}(\lambda)$, we set $\tilde{e}_i^{\max}(b) := \tilde{e}_i^{\varepsilon_i(b)}(b)$ and $\tilde{f}_i^{\max}(b) := \tilde{f}_i^{\varphi_i(b)}(b)$. By definitions, we have $\tilde{e}_i^{\max}(b) \not= 0$, $\tilde{e} (\tilde{e}_i^{\max}(b)) = 0$, and $\tilde{f}_i^{\max}(b) \not= 0$, $\tilde{f}_i(\tilde{f}_i^{\max}(b)) = 0$ for $b \in \mathcal{B}(\lambda)$ and $i \in I$.
\end{exm}

We introduce morphisms between two crystals.

\begin{defn}[{\cite[Definition 4.5.5, Definition 4.5.6]{Hong}}]
Let $\mathcal{B}, \mathcal{B}^\prime$ be crystals.

\noindent (1) A {\it morphism} $\Psi : \mathcal{B} \rightarrow \mathcal{B}^\prime$ is a map $\Psi : \mathcal{B}\sqcup \{ 0 \} \rightarrow \mathcal{B}^\prime \sqcup \{ 0 \}$ satisfying:
\begin{enumerate}[{(}i{)}]
\item $\psi(0) = 0$;
\item if $b, b^\prime \in \mathcal{B}$ satisfy $b^\prime = \tilde{f}_i(b)$ for some $i \in I$ and $\Psi(b) \not= 0$, $\Psi(b^\prime) \not= 0$, then $\Psi(b^\prime) = \tilde{f}_i(\Psi(b))$ and $\Psi(b) = \tilde{e}_i(\Psi(b^\prime))$;
\item if $b \in \mathcal{B}$ satisfies $\Psi(b) \not= 0$, then $\wt(\Psi(b)) = \wt(b)$, $\varphi_i(\Psi(b)) = \varphi_i(b)$, and $\varepsilon_i(\Psi(b)) = \varepsilon_i(b)$ for all $i \in I$.
\end{enumerate}

\noindent (2) A morphism $\Psi : \mathcal{B} \rightarrow \mathcal{B}^\prime$ is {\it strict} if for all $b \in \mathcal{B}$ and $i \in I$, $\Psi(\tilde{e}_i(b)) = \tilde{e}_i(\Psi(b))$ and $\Psi(\tilde{f}_i(b)) = \tilde{f}_i(\Psi(b))$

\noindent (3) A strict morphism $\Psi : \mathcal{B} \rightarrow \mathcal{B}^\prime$ is called an {\it isomorphism} if $\Psi : \mathcal{B} \sqcup \{ 0 \} \rightarrow \mathcal{B}^\prime \sqcup \{ 0 \}$ is bijective.

\noindent (4) If there exists an isomorphism between $\mathcal{B}$ and $\mathcal{B}^\prime$, we say that $\mathcal{B}$ and $\mathcal{B}^\prime$ are {\it isomorphic}.
\end{defn}

When we take two crystals, we can form a tensor product of these.

\begin{defn}[{\cite[Definition 4.5.3]{Hong}}]
Let $\mathcal{B}$ and $\mathcal{B}^\prime$ be crystals. The {\it tensor product} of $\mathcal{B}$ and $\mathcal{B}^\prime$ is the set $\mathcal{B} \otimes \mathcal{B}^\prime := \mathcal{B} \times \mathcal{B}^\prime$ equipped with maps $\wt$, $\{ \tilde{f}_i \}_{i \in I}$, $\{ \tilde{e}_{i} \}_{i \in I}$, $\{ \varphi_i \}_{i \in I}$, $\{ \varepsilon_i \}_{i \in I}$ defined as follows: for $b \in \mathcal{B}$ and $b^\prime \in \mathcal{B}^\prime$,
\begin{align*}
\wt (b\otimes b^\prime) &= \wt(b) + \wt(b^\prime); \\
\varepsilon_i (b \otimes b^\prime) &= \max \{ \varepsilon_i(b), \varepsilon_i(b^\prime)-\langle \wt(b), \alpha_i^\vee \rangle \}; \\
\varphi_i (b \otimes b^\prime) &= \max \{ \varphi_i(b^\prime), \varphi_i(b) + \langle \wt(b^\prime), \alpha_i^\vee \rangle \}; \\
\tilde{e}_i(b \otimes b^\prime) &= \begin{cases} \tilde{e}_i(b)\otimes b^\prime & (\varphi_i(b) \geq \varepsilon_i(b^\prime)), \\ b \otimes \tilde{e}_i(b^\prime) & (\varphi_i(b) < \varepsilon_i(b^\prime)); \end{cases} \\
\tilde{f}_i(b \otimes b^\prime) &= \begin{cases} \tilde{f}_i(b)\otimes b^\prime & (\varphi_i(b) > \varepsilon_i(b^\prime)), \\ b \otimes \tilde{f}_i(b^\prime) & (\varphi_i(b) \leq \varepsilon_i(b^\prime)). \end{cases}
\end{align*}
Note that for $b \in \mathcal{B}$ and $b^\prime \in \mathcal{B}^\prime$, we denote the element $(b, b^\prime) \in \mathcal{B} \otimes \mathcal{B}^\prime$ by $b \otimes b^\prime$.
\end{defn}

We can easily check that a tensor products of crystals is also a crystal.

Let $\mathcal{B}$ and $\mathcal{B}^\prime$ be crystals. In this paper, we set $\mathcal{S} \otimes \mathcal{S}^\prime := \{ b \otimes b^\prime \ | \ b \in \mathcal{S}, \ b^\prime \in \mathcal{S}^\prime \}$ for subsets $\mathcal{S} \subset \mathcal{B}$ and $\mathcal{S}^\prime \subset \mathcal{B}^\prime$.

For $\lambda, \mu \in P^+$, $i \in I$, and $b \otimes b^\prime \in \mathcal{B}(\lambda) \otimes \mathcal{B}(\mu)$, we set $\tilde{e}_i^{\max}(b \otimes b^\prime) := \tilde{e}_i^{\varepsilon_i(b \otimes b^\prime)}(b \otimes b^\prime)$ and $\tilde{f}_i^{\max}(b \otimes b^\prime) := \tilde{f}_i^{\varphi_i(b \otimes b^\prime)}(b \otimes b^\prime)$. We can verify that $\tilde{e}_i^{\max}(b \otimes b^\prime) \not= 0$, $\tilde{e}_i( \tilde{e}_i^{\max}(b \otimes b^\prime) ) = 0$, and $\tilde{f}_i^{\max}(b \otimes b^\prime) \not= 0$, $\tilde{f}_i( \tilde{f}_i^{\max}(b \otimes b^\prime) ) = 0$.

\subsection{Demazure crystals}
We define Demazure crystals $\mathcal{B}_w(\lambda)$ for $w \in W$ and $\lambda \in P^+$.

\begin{defn}
Let $w \in W$ and $\lambda \in P^+$. We fix a reduced expression $w = s_{i_1} \cdots s_{i_l}$, where $l = \ell (w)$. The {\it Demazure crystal} $\mathcal{B}_w(\lambda)$ is defined to be
\begin{align*}
\mathcal{B}_w(\lambda) := \left\{ \left. \tilde{f}_{i_1}^{a_1} \cdots \tilde{f}_{i_l}^{a_l} (b_\lambda) \ \right| \ a_1, \ldots, a_l \geq 0 \right\} \setminus \{ 0 \}.
\end{align*}
\end{defn}

\begin{rem}
(1) The Demazure crystal $\mathcal{B}_w(\lambda)$ does not depend on the choice of a reduced expression for $w$. Moreover, for all $i \in I$, we have $\tilde{e}_i(\mathcal{B}_w(\lambda)) \subset \mathcal{B}_w(\lambda) \sqcup \{ 0 \}$. For details, see \cite[Proposition 3.2.3]{Kashiwara}. 

\noindent (2) In general, $\mathcal{B}_w(\lambda)$ is not stable under lowering Kashiwara operators. Hence a Demazure crystal is not a crystal.
\end{rem}

A Demazure crystal has a good parametrization.

\begin{defn}[{\cite[Section 1]{Littelmann}}]
Let $w \in W$ and $\lambda \in P^+$. We fix a reduced expression $w = s_{i_1} \cdots s_{i_l}$, where $l = \ell (w)$. For $b \in \mathcal{B}_w(\lambda)$, we define the $l$-tuple of nonnegative integers $\Omega(b) := (a_1, \ldots, a_l)$ by
\begin{align*}
a_1 &:= \max \{ a \geq 0 \ | \ \tilde{e}_{i_1}^a (b) \not= 0 \}, \\
a_2 &:= \max \{ a \geq 0 \ | \ \tilde{e}_{i_2}^a \tilde{e}_{i_1}^{a_1} (b) \not= 0 \}, \\
a_3 &:= \max \{ a \geq 0 \ | \ \tilde{e}_{i_3}^{a} \tilde{e}_{i_2}^{a_2} \tilde{e}_{i_1}^{a_1} (b) \not= 0 \}, \\
&\vdots& \\
a_l &:= \max \{ a \geq 0 \ | \ \tilde{e}_{i_l}^{a} \tilde{e}_{i_{l-1}}^{a_{l-1}} \cdots \tilde{e}_{i_1}^{a_1} (b) \not= 0 \}.
\end{align*}
Then one has $b = \tilde{f}_{i_1}^{a_1} \cdots \tilde{f}_{i_l}^{a_l} (b_\lambda)$, and for all $k \in \{1, \ldots, l-1 \}$ we have $\tilde{e}_{i_k} \tilde{f}_{i_{k+1}}^{a_{k+1}} \cdots \tilde{f}_{i_l}^{a_l} (b_\lambda) = 0$. We call $\Omega(b)$, or the equation $b = \tilde{f}_{i_1}^{a_1} \cdots \tilde{f}_{i_l}^{a_l} (b_\lambda)$ the {\it string parametrization} of $b$.
\end{defn}

\begin{rem}
It is known that $\Omega : \mathcal{B}_w(\lambda) \rightarrow \mathbb{Z}^l$ is injective.
\end{rem}

We know the following.

\begin{lem}[{\cite[Proposition 3.2.3]{Kashiwara}}]\label{f(Dem)}
Let $\lambda \in P^+$, $w \in W$ and $k \in I$.

\noindent (1) If $\ell(s_k w) > \ell(w)$, then we have
\begin{align*}
\bigcup_{n\geq0} \tilde{f}_k^n (\mathcal{B}_w(\lambda)) \setminus \{ 0 \} = \mathcal{B}_{s_k w}(\lambda).
\end{align*}

\noindent (2) If $\ell(s_k w) < \ell(w)$, then we have
\begin{align*}
\bigcup_{n\geq0} \tilde{f}_k^n (\mathcal{B}_w(\lambda)) \setminus \{ 0 \} = \mathcal{B}_{w}(\lambda).
\end{align*}

In particular, $\tilde{f}_k(\mathcal{B}_w(\lambda)) \subset \mathcal{B}_w(\lambda) \sqcup \{ 0 \}$ if $\ell(s_k w) < \ell(w)$.
\end{lem}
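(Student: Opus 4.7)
My plan is to leverage the invariance of $\mathcal{B}_w(\lambda)$ under the choice of reduced expression for $w$, recorded in the preceding remark, by selecting reduced words that exhibit $s_k$ at the leftmost position. Once $s_k$ appears at the left end, the defining formula $\mathcal{B}_w(\lambda) = \{\tilde{f}_{i_1}^{a_1} \cdots \tilde{f}_{i_l}^{a_l}(b_\lambda)\} \setminus \{0\}$ immediately exposes how iterated applications of $\tilde{f}_k$ interact with the Demazure set, and both parts reduce to elementary bookkeeping with the definition.

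For part (1), I would fix a reduced expression $w = s_{i_1} \cdots s_{i_l}$; since $\ell(s_k w) = \ell(w) + 1$, the product $s_k s_{i_1} \cdots s_{i_l}$ is automatically a reduced expression for $s_k w$. Substituting this reduced word into the definition of $\mathcal{B}_{s_k w}(\lambda)$ isolates the leftmost operator $\tilde{f}_k^{a_0}$ and yields $\mathcal{B}_{s_k w}(\lambda) = \bigcup_{a_0 \geq 0} \tilde{f}_k^{a_0}(\mathcal{B}_w(\lambda) \sqcup \{0\}) \setminus \{0\}$, which is the claimed identity by inspection.

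For part (2), the same bookkeeping runs in reverse. Since $\ell(s_k w) = \ell(w) - 1$, I would start with a reduced expression $s_k w = s_{j_1} \cdots s_{j_{l-1}}$ and prepend $s_k$ to form a reduced word $w = s_k s_{j_1} \cdots s_{j_{l-1}}$. The definition, applied with this reduced word, already gives $\mathcal{B}_w(\lambda) = \bigcup_{n \geq 0} \tilde{f}_k^n(\mathcal{B}_{s_k w}(\lambda)) \setminus \{0\}$. In particular, every element of $\mathcal{B}_w(\lambda)$ has the form $\tilde{f}_k^m(c)$ for some $c \in \mathcal{B}_{s_k w}(\lambda)$ and $m \geq 0$, so applying another $\tilde{f}_k^n$ produces $\tilde{f}_k^{n+m}(c) \in \mathcal{B}_w(\lambda) \sqcup \{0\}$. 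Combined with the trivial $n = 0$ inclusion, this simultaneously yields the claimed equality and the final assertion that $\tilde{f}_k$ preserves $\mathcal{B}_w(\lambda) \sqcup \{0\}$.

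The only genuine obstacle in this plan is the independence of $\mathcal{B}_w(\lambda)$ from the choice of reduced expression. This is a non-trivial property of Demazure crystals that the paper cites from Kashiwara, and without it, the manoeuvre ``prepend $s_k$ to a reduced word'' in part (2) would not canonically identify the two descriptions of $\mathcal{B}_w(\lambda)$. Granting that invariance, however, the argument is purely formal and requires no further crystal-theoretic input.
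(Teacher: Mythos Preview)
The paper does not supply its own proof of this lemma; it is simply quoted from \cite[Proposition~3.2.3]{Kashiwara}, so there is no in-paper argument to compare against.

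Your proposed argument is correct. Both parts are immediate consequences of the definition of $\mathcal{B}_w(\lambda)$ once one is allowed to choose a reduced expression beginning with $s_k$, and you have correctly isolated the one substantive ingredient: the independence of $\mathcal{B}_w(\lambda)$ from the choice of reduced expression, which the paper records (also by citation to \cite{Kashiwara}) in the remark immediately preceding the lemma. With that granted, your bookkeeping in part~(1) is transparent, and your reduction of part~(2) to part~(1) applied to $s_k w$ is clean; the observation that $\tilde{f}_k^n(\tilde{f}_k^m(c)) = \tilde{f}_k^{n+m}(c)$ lands back in $\mathcal{B}_w(\lambda) \sqcup \{0\}$ is exactly what is needed. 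One minor point worth making explicit in part~(1): when passing from $\mathcal{B}_{s_k w}(\lambda)$ to $\bigcup_{a_0 \geq 0} \tilde{f}_k^{a_0}(\mathcal{B}_w(\lambda)) \setminus \{0\}$, you implicitly use that if $\tilde{f}_k^{a_0}\tilde{f}_{i_1}^{a_1}\cdots\tilde{f}_{i_l}^{a_l}(b_\lambda) \neq 0$ then already $\tilde{f}_{i_1}^{a_1}\cdots\tilde{f}_{i_l}^{a_l}(b_\lambda) \neq 0$; this is obvious but is the reason the two descriptions match on the nose.
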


For a crystal $\mathcal{B}$ and $i \in I$, an \textit{$i$-string} is a subset of $\mathcal{B}$ of the form:
\begin{align*}
S = \{ \tilde{e}_i^n (b) \ | \ n \geq 0 \} \cup \{ \tilde{f}_i^n (b) \ | \ n \geq 0 \} \setminus \{ 0 \}
\end{align*}
for some $b \in \mathcal{B}$. For an $i$-string $S$, the {\it highest weight element} of $S$ is the element $b \in S$ such that $\tilde{e}_i(b) = 0$.

A Demazure crystal has the following property, called the \textit{string property}.

\begin{lem}[{\cite[Proposition 3.3.5]{Kashiwara}}]\label{string}
Let $\lambda \in P^+$, $w \in W$, and $i \in I$. For an $i$-string $S \subset \mathcal{B}(\lambda)$ with the highest weight element $b$, the set $S \cap \mathcal{B}_w(\lambda)$ is identical to either $\emptyset$, $\{ b \}$, or $S$.
\end{lem}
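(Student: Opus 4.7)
The plan is induction on $\ell(w)$. The base case $w = e$ is immediate: $\mathcal{B}_e(\lambda) = \{b_\lambda\}$, and $b_\lambda$ is annihilated by every $\tilde{e}_i$, so if $b_\lambda \in S$ then $b_\lambda = b$ and the intersection is $\{b\}$; otherwise it is $\emptyset$.

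For the inductive step, I would choose $k \in I$ with $\ell(s_k w) < \ell(w)$, set $v := s_k w$ so that $w = s_k v$ with $\ell(s_k v) > \ell(v)$, and invoke Lemma \ref{f(Dem)}(1) to write
\[
\mathcal{B}_w(\lambda) \;=\; \bigcup_{n \geq 0} \tilde{f}_k^n\!\bigl(\mathcal{B}_v(\lambda)\bigr) \setminus \{0\}.
\]
When $i = k$, the argument is short. If $c \in S \cap \mathcal{B}_w(\lambda)$, then $c = \tilde{f}_k^n(c')$ for some $c' \in \mathcal{B}_v(\lambda)$ and $n \geq 0$, and $c' = \tilde{e}_k^n(c)$ also lies in $S$ since $S$ is a $k$-string; thus $S \cap \mathcal{B}_v(\lambda) \neq \emptyset$. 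By induction this intersection is $\{b\}$ or $S$; in either case $b \in \mathcal{B}_v(\lambda)$, so every $\tilde{f}_k^m(b) \in S$ lies in $\mathcal{B}_w(\lambda)$, giving $S \cap \mathcal{B}_w(\lambda) = S$.

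The main obstacle is the case $i \neq k$, where applying $\tilde{f}_k^n$ to an element of an $i$-string does not in general remain in the same $i$-string. The plan here is: assuming $S \cap \mathcal{B}_w(\lambda)$ is nonempty and distinct from $\{b\}$, pick $c \in S \cap \mathcal{B}_w(\lambda)$ with $c \neq b$, write $c = \tilde{f}_k^n(c')$ with $c' \in \mathcal{B}_v(\lambda)$, consider the $i$-string $S'$ of $c'$, and apply the inductive hypothesis to $S' \cap \mathcal{B}_v(\lambda)$. A local rank-two analysis of the interaction of $\tilde{f}_i$ and $\tilde{f}_k$ at the relevant elements, governed by the Cartan integer $a_{ik}$, then shows that the $\tilde{e}_i$-chain from $c$ up to $b$ can be realized by successively pulling back elements of $\mathcal{B}_v(\lambda)$ via suitable powers of $\tilde{f}_k$, whence $b \in \mathcal{B}_w(\lambda)$, and symmetrically every element of $S$ lies in $\mathcal{B}_w(\lambda)$.

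An alternative that sidesteps the rank-two combinatorics is Kashiwara's original global-basis argument: $\mathcal{B}_w(\lambda)$ lifts to the crystal basis of a Demazure submodule of $L(\lambda)$, and when $\ell(s_k w) < \ell(w)$ this submodule is stable under the subalgebra $U^k$. The decomposition as a $U^k$-module together with the compatibility of the crystal basis with this action yields the $k$-string statement directly, while the $i \neq k$ case follows from the recursion combined with the containment $\mathcal{B}_v(\lambda) \subset \mathcal{B}_w(\lambda)$ and the inductive hypothesis.
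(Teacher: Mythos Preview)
The paper does not prove this lemma; it is quoted with a citation to \cite[Proposition~3.3.5]{Kashiwara} and used as a black box. So there is no ``paper's own proof'' to compare against, only Kashiwara's original argument, which is essentially the global-basis alternative you sketch at the end.

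Your inductive setup, the base case, and the case $i = k$ are fine. The gap is in the case $i \neq k$. First, a minor point: showing $b \in \mathcal{B}_w(\lambda)$ requires no rank-two analysis at all, since $\mathcal{B}_w(\lambda)$ is already closed under every $\tilde{e}_j$ (this is part of Remark~(1) following the definition of Demazure crystals), so $b = \tilde{e}_i^{\varepsilon_i(c)}(c) \in \mathcal{B}_w(\lambda)$ automatically. The genuine content of the lemma is the \emph{downward} direction: from $c \in S \cap \mathcal{B}_w(\lambda)$ with $c \neq b$, conclude that $\tilde{f}_i^m(b) \in \mathcal{B}_w(\lambda)$ for all $m$ up to the end of the string. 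Your appeal to ``symmetrically every element of $S$ lies in $\mathcal{B}_w(\lambda)$'' is exactly where the argument breaks, because there is no symmetry: $\mathcal{B}_w(\lambda)$ is $\tilde{e}_i$-stable but not $\tilde{f}_i$-stable, and the whole point of the lemma is to control how $\tilde{f}_i$ can exit $\mathcal{B}_w(\lambda)$.

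The phrase ``a local rank-two analysis \ldots\ governed by the Cartan integer $a_{ik}$'' is not a proof. Concretely, you would need: given $c = \tilde{f}_k^n(c')$ with $c' \in \mathcal{B}_v(\lambda)$, and given that the $i$-string of $c'$ is either $\{c'\}$ or entirely inside $\mathcal{B}_v(\lambda)$, produce for each $\tilde{f}_i^m(c) \neq 0$ an element of $\mathcal{B}_v(\lambda)$ mapping onto it under some power of $\tilde{f}_k$. There is no general commutation relation between $\tilde{f}_i$ and $\tilde{f}_k$ that makes this automatic; the interaction depends on the full rank-two crystal through $c$, not just on $a_{ik}$, and the inductive hypothesis on $S' \cap \mathcal{B}_v(\lambda)$ does not by itself pin down where $\tilde{e}_k^{\max}(\tilde{f}_i^m(c))$ lands. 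Kashiwara's proof avoids this by using that the Demazure module $V_w(\lambda)$ is a $U^k$-submodule when $s_k w < w$, so its crystal basis is a union of full $k$-strings, and then combines this with the inductive construction; the module-theoretic input is what replaces the missing combinatorial step.
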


\section{Lakshmibai-Seshadri paths}\label{LS}

\subsection{Definition of Lakshmibai-Seshadri paths}
In this subsection, we recall the definition of Lakshmibai-Seshadri paths, which provide a realization of highest weight crystals. For details, see \cite{Littelmann_LR} and \cite{Littelmann_path}.

Let $P_{\mathbb{R}} := P \otimes_{\mathbb{Z}} \mathbb{R}$ be the vector space over $\mathbb{R}$ generated by $P$. A piecewise-linear continuous map $\pi : [0, 1] \rightarrow P_\mathbb{R}$ with $\pi(0) = 0$ is called a {\it path}. Now we define Lakshmibai-Seshadri paths. Let $\lambda \in P^+$. 

\begin{defn}
For $\mu, \nu \in W\lambda$, we write $\mu \geq \nu$ if there exist $r \geq 0$, $\mu_0, \mu_1, \ldots, \mu_r \in W\lambda$ with $\mu_0 = \mu, \ \mu_r = \nu$, and positive roots $\beta_1, \ldots, \beta_r \in \Phi^+$ such that for $k = 1, \ldots, r$,
\begin{align*}
\mu_{k} = s_{\beta_k}(\mu_{k-1}), \ \langle \mu_{k-1} , \beta_k^\vee \rangle < 0.
\end{align*}
When $\mu \geq \nu$ holds for $\mu, \nu \in W\lambda$, the number $\dist (\mu, \nu)$ is defined to be the maximal length $r$ of sequences satisfying the above condition.
\end{defn}

\begin{defn}
Let $\sigma$ be a rational number with $0 < \sigma < 1$. For $\mu, \nu \in W\lambda$ with $\mu \geq \nu$, a {\it $\sigma$-chain} for $(\mu, \nu)$ is a sequence $\mu = \mu_0 > \mu_1 > \cdots > \mu_r = \nu$ of elements in $W\lambda$ with $r \geq 0$, and positive roots $\beta_1, \ldots, \beta_r \in \Phi^+$ such that either $r = 0$, or $r \geq 1$ and for each $k \in \{1, \ldots, r \}$, we have $\mu_k = s_{\beta_{k}}(\mu_{k-1})$, $\dist(\mu_{k-1}, \mu_k) = 1$, and $\sigma \langle \mu_{k-1}, \beta_k^\vee \rangle \in \mathbb{Z}$.
\end{defn}

\begin{defn}
(1) A {\it Lakshmibai-Seshadri chain} of shape $\lambda$ is a pair $(\underline{\nu}; \underline{a})$ where $\underline{\nu} : \nu_1 > \cdots > \nu_r$ is a sequence of elements in $W\lambda$, and $\underline{a} : 0 = a_0 < a_1 < \cdots < a_r = 1$ is a sequence of rational numbers such that there is an $a_k$-chain for $(\nu_k, \nu_{k+1})$ for each $k \in \{ 1, \ldots, r-1 \}$. We call Lakshmibai-Seshadri chains LS-chains for short.

\noindent (2) Let $(\underline{\nu}; \underline{a}) = (\nu_1 > \cdots > \nu_r; \ 0 = a_0 < a_1 < \cdots < a_r = 1)$ be an LS-chain of shape $\lambda$. A {\it Lakshmibai-Seshadri path} corresponding to $(\underline{\nu}; \underline{a})$ is a path $\pi$ defined by
\begin{align*}
\pi (t) := \sum_{i=1}^{k-1} (a_i - a_{i-1})\nu_i + (t-a_{k-1})\nu_k
\end{align*}
if $t \in [a_{k-1}, a_k]$ for $k \in \{1, \ldots, r-1 \}$.
We call Lakshmibai-Seshadri paths LS-paths for short. When $\pi$ is an LS-path corresponding to an LS-chain $(\underline{\nu}; \underline{a})$, we often write $\pi =  (\underline{\nu}; \underline{a})$.

\noindent (3) We denote the set of LS-paths of shape $\lambda$ by $\mathbb{B}(\lambda)$.
\end{defn}

\subsection{Crystal structure on the set of Lakshmibai-Seshadri paths}

We define a crystal structure on $\mathbb{B}(\lambda)$.

\begin{defn}
For a path $\pi$, we set $\wt (\pi) := \pi(1)$. We call $\wt(\pi)$ the {\it weight} of $\pi$.
\end{defn}

\begin{rem}
By \cite[Lemma 4.5 a)]{Littelmann_path}, we have $\pi(1) \in P$ for each $\pi \in \mathbb{B}(\lambda)$; hence $\wt$ defines a map from $\mathbb{B}(\lambda)$ to $P$.
\end{rem}

Fix a path $\pi : [0, 1] \rightarrow P_\mathbb{R}$ and $i \in I$. We set
\begin{align*}
h_i^\pi : [0, 1] \rightarrow \mathbb{R}, \ h_i^\pi(t) := \langle \pi(t), \alpha_i^\vee \rangle;
\end{align*}
this is called the {\it height function} for $\pi$.

Assume that all of minimums of the function $h_i^\pi(t)$ are integers and that $\pi(1) \in P$. If $\pi$ is an LS-path, these assumptions are satisfied. Set $m_i^\pi := \min \{ h_i^\pi(t) \ | \ t \in [0, 1] \}$. Note that $m_i^\pi \in \mathbb{Z}$. Moreover, $m_i^\pi \leq 0$ since $\pi(0) = 0$. Also, since $\pi(1) \in P$, $h_i^\pi(1) = \langle \pi(1), \alpha_i^\vee \rangle \in \mathbb{Z}$ holds. Now we can define root operators $\tilde{e}_i$ and $\tilde{f}_i$.

First, we introduce the {\it raising root operator} $\tilde{e}_i$. If $m_i^\pi = 0$, then we set $\tilde{e}_i (\pi) := 0$. If $m_i^\pi < 0$, then we set $t_1 := \min \{ t \in [0, 1] \ | \ h_i^\pi(t) = m_i^\pi \}$, $t_0 := \max \{ t \in [0, t_1] \ | \ h_i^\pi(t) = m_i^\pi +1 \}$. Note that the set $\{ t \in [0, t_1] \ | \ h_i^\pi(t) = m_i^\pi +1 \}$ is not empty because $h_i^\pi(0) = 0$, $m_i^\pi \in \{ -1, -2, \ldots \}$, and $h_i^\pi$ is continuous. Hence $0 \leq t_0 < t_1 \leq 1$ holds. We define $\tilde{e}_i(\pi)$ by

\begin{align*}
(\tilde{e}_i (\pi))(t) := \begin{cases} \pi(t) & (0 \leq t \leq t_0), \\
s_i(\pi(t) - \pi(t_0)) + \pi(t_0) & (t_0 \leq t \leq t_1), \\
\pi(t) + \alpha_i & (t_1 \leq t \leq 1). \end{cases}
\end{align*}

Next we define the {\it lowering root operator} $\tilde{f}_i$. If $m_i^\pi = h_i^\pi(1)$, then we set $\tilde{f}_i (\pi) := 0$. If $m_i^\pi < h_i^\pi(1)$, then we set $t_0 := \max \{ t \in [0, 1] \ | \ h_i^\pi(t) = m_i^\pi \}$, $t_1 := \min \{ t \in [t_0, 1] \ | \ h_i^\pi(t) = m_i^\pi +1 \}$. Note that the set $\{ t \in [t_0, 1] \ | \ h_i^\pi(t) = m_i^\pi +1 \}$ is not empty because $h_i^\pi(1) \in \mathbb{Z}$, $m_i^\pi \in \{ \ldots, h_i^\pi(1) - 2, h_i^\pi(1) - 1\}$, and $h_i^\pi$ is continuous. Hence $0 \leq t_0 < t_1 \leq 1$ holds. We define $\tilde{f}_i(\pi)$ by

\begin{align*}
(\tilde{f}_i (\pi))(t) := \begin{cases} \pi(t) & (0 \leq t \leq t_0), \\
s_i(\pi(t) - \pi(t_0)) + \pi(t_0) & (t_0 \leq t \leq t_1), \\
\pi(t) - \alpha_i & (t_1 \leq t \leq 1). \end{cases}
\end{align*}

\begin{lem}[{\cite[Section 2, Section 4]{Littelmann_path}}]
For $i \in I$, we have $\tilde{e}_i ( \mathbb{B}(\lambda) ) \subset \mathbb{B}(\lambda) \sqcup \{ 0 \}$ and $\tilde{f}_i ( \mathbb{B}(\lambda) ) \subset \mathbb{B}(\lambda) \sqcup \{ 0 \}$.
\end{lem}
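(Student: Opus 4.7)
The plan is to verify that after applying $\tilde{e}_i$ or $\tilde{f}_i$ to an LS-path, the resulting path (when nonzero) is again realized by a valid LS-chain. Write $\pi = (\nu_1 > \cdots > \nu_r;\ 0 = a_0 < \cdots < a_r = 1)$. The first observation is that the height function $h_i^\pi$ is piecewise linear with slope $\langle \nu_k, \alpha_i^\vee \rangle$ on $[a_{k-1}, a_k]$, so every local extremum of $h_i^\pi$, including its minimum $m_i^\pi$, is attained at some breakpoint $a_k$. In particular $m_i^\pi \in \mathbb{Z}$, which is one of the standing hypotheses needed to define the operators in the first place.

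Next, I would show that the critical times $t_0, t_1$ used to define $\tilde{e}_i(\pi)$ and $\tilde{f}_i(\pi)$ can, after refining the LS-chain representation of $\pi$ if necessary, be taken to lie among the breakpoints $\{a_0, \ldots, a_r\}$. Refinement is legitimate here because an $a_k$-chain between $\nu_k$ and $\nu_{k+1}$ produces intermediate elements of $W\lambda$, each of which can be inserted as an additional breakpoint with appropriate rational coordinate; together with the integrality $a_k \langle \mu, \beta^\vee\rangle \in \mathbb Z$ built into the chain condition, this lets us arrange $t_0 = a_j$ and $t_1 = a_k$ with $j < k$. Then, directly from the definition of the root operators, the new path has the LS-chain data
\begin{align*}
\tilde{f}_i(\pi) = (\nu_1 > \cdots > \nu_j > s_i\nu_{j+1} > \cdots > s_i\nu_k > \nu_{k+1} > \cdots > \nu_r;\ \underline{a})
\end{align*}
with the same breakpoints (and analogously for $\tilde{e}_i$, where $t_1$ now separates the reflected segment from the translated segment). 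Here equalities among consecutive entries should be merged, and the corresponding breakpoints dropped.

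Finally, I would verify that this datum really is an LS-chain of shape $\lambda$. The directions obviously lie in $W\lambda$. The strict inequalities across the boundary indices use the choice of $t_0, t_1$: on $[t_0, t_1]$ the function $h_i^\pi$ moves between the minimum value $m_i^\pi$ and $m_i^\pi + 1$, which forces the sign of $\langle \nu_\bullet, \alpha_i^\vee\rangle$ at the boundary segments to be such that $\nu_j > s_i\nu_{j+1}$ and $s_i\nu_k > \nu_{k+1}$ (with strict inequality when not merged). In the interior of the reflected range, the $a_m$-chain condition for $(s_i\nu_m, s_i\nu_{m+1})$ follows from the one for $(\nu_m, \nu_{m+1})$ by applying $s_i$ to the entire intermediate chain, noting that $s_i$ preserves $W\lambda$ and that the relevant positive roots $\beta$ appearing in the chain satisfy $\beta \neq \alpha_i$ (because $h_i^\pi$ is monotone on $[t_0,t_1]$). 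The main obstacle is the boundary verification for the pairs $(\nu_j, s_i\nu_{j+1})$ and $(s_i\nu_k, \nu_{k+1})$: one must produce an $a_j$- and an $a_k$-chain linking these pairs, which I would obtain by a careful case analysis on $\langle \nu_{j+1}, \alpha_i^\vee\rangle$ and $\langle \nu_k, \alpha_i^\vee\rangle$, using the fact that these inner products are forced to be $\pm 1$ by the extremality of $t_0, t_1$. Once these boundary chains are constructed, concatenation gives the required chains for $\tilde{f}_i(\pi)$, and the lemma follows.
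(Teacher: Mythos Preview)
The paper itself does not prove this lemma; it is simply quoted from \cite{Littelmann_path} with a citation, so there is no in-paper argument to compare against. Your outline follows the same overall architecture as Littelmann's original proof --- reflect the directions on $[t_0,t_1]$ to produce a candidate LS-chain and then verify the axioms --- but several of the concrete assertions you rely on are incorrect.

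First, ``$m_i^\pi$ is attained at a breakpoint, hence $m_i^\pi \in \mathbb{Z}$'' does not follow: the value $h_i^\pi(a_k)=\sum_{j\le k}(a_j-a_{j-1})\langle\nu_j,\alpha_i^\vee\rangle$ is a priori only rational. Integrality of all local minima is a separate lemma in \cite{Littelmann_path} and genuinely uses the $a_k$-chain integrality condition, not mere piecewise linearity. Second, the claim that the boundary pairings $\langle\nu_{j+1},\alpha_i^\vee\rangle$ and $\langle\nu_k,\alpha_i^\vee\rangle$ are ``forced to be $\pm 1$ by the extremality of $t_0,t_1$'' is false; these can be any nonzero integer. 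What extremality actually gives is the \emph{sign} of these pairings, and what the boundary chain (a single $\alpha_i$-step $s_i\nu_\bullet>\nu_\bullet$) requires is the integrality of $t_0\langle\nu_\bullet,\alpha_i^\vee\rangle$ and $t_1\langle\nu_\bullet,\alpha_i^\vee\rangle$, which again comes from the local-minimum integrality lemma rather than from any $\pm 1$ constraint. Third, the interior verification by ``applying $s_i$ to the entire intermediate chain'' does not work as stated: even when $\beta\neq\alpha_i$, the root $s_i\beta$ can be negative, so the reflected sequence need not be a descending chain with positive roots; moreover, monotonicity of $h_i^\pi$ on $[t_0,t_1]$ constrains the directions $\nu_m$ of $\pi$, not the auxiliary roots $\beta$ appearing in the $a_m$-chains between them. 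Littelmann handles both the interior and the boundary with dedicated combinatorial lemmas, and your sketch is missing that ingredient.
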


Note that all of minimums of the functions $h_i^{\tilde{e}_i(\pi)}(t)$ and $h_i^{\tilde{f}_i(\pi)}(t)$ are integers, and that $(\tilde{e}_i(\pi))(1)$ and $(\tilde{f}_i(\pi))(1)$ are contained in $P$.

Finally, for $i \in I$ and a path $\pi$ which satisfies that all of minimums of the function $h_i^\pi(t)$ are integers and that $\pi(1) \in P$, we set $\varphi_i(\pi) := \max \{ n \geq 0 \ | \ \tilde{f}_i^n(\pi) \not= 0 \}$ and $\varepsilon_i(\pi) := \max \{ n \geq 0 \ | \ \tilde{e}_i^n(\pi) \not= 0 \}$. By using these, we can define maps $\varphi_i, \varepsilon_i : \mathbb{B}(\lambda) \rightarrow \mathbb{Z}$. 
\begin{thm}[{\cite[Section 2, Section 4]{Littelmann_path}}]
The set $\mathbb{B}(\lambda)$, equipped with maps $\wt$, $\{ \tilde{f}_i \}_{i \in I}$, $\{ \tilde{e}_i \}_{i \in I}$, $\{ \varphi_i \}_{i \in I}$, $\{ \varepsilon_i \}_{i \in I}$, is a crystal.
\end{thm}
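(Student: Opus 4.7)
The plan is to verify the crystal axioms one-by-one directly from the definitions of the root operators in terms of the height function. The axiom $\varphi_i(\pi) = -\infty \Rightarrow \tilde{e}_i(\pi) = \tilde{f}_i(\pi) = 0$ is vacuous here because $\varphi_i(\pi), \varepsilon_i(\pi) \geq 0$ by construction. So the real content is (a) the weight changes, (b) the mutual inversion of $\tilde{e}_i$ and $\tilde{f}_i$, (c) the effect of the operators on $\varphi_i, \varepsilon_i$, and (d) the identity $\varphi_i(\pi) - \varepsilon_i(\pi) = \langle \wt(\pi), \alpha_i^\vee \rangle$.

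For the weight conditions, I would just unwind the explicit formulas for $(\tilde{e}_i \pi)(t)$ and $(\tilde{f}_i \pi)(t)$: on the last piece $[t_1, 1]$ one adds $\pm\alpha_i$, so $(\tilde{e}_i \pi)(1) = \pi(1) + \alpha_i$ and $(\tilde{f}_i \pi)(1) = \pi(1) - \alpha_i$ whenever the operator is nonzero. For axiom (6) (mutual inversion), I would show that if $\tilde{f}_i(\pi) \neq 0$ with cutting points $t_0 < t_1$, then the height function $h_i^{\tilde{f}_i(\pi)}$ attains its minimum $m_i^\pi$ exactly on the region that corresponds to the $t_0, t_1$ needed to recover $\pi$ by $\tilde{e}_i$, and similarly in the other direction; this is essentially a bookkeeping argument using that reflection by $s_i$ negates $\langle\,\cdot\,,\alpha_i^\vee\rangle$ and that the $\pm\alpha_i$ shift on the tail compensates by exactly $2$ in the height.

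The key technical lemma, which also drives axioms (1), (4), (5), is the following pair of identities for the minimum and endpoint value of the height function:
\begin{align*}
\tilde{e}_i(\pi) \neq 0 &\Longrightarrow m_i^{\tilde{e}_i(\pi)} = m_i^\pi + 1, \quad h_i^{\tilde{e}_i(\pi)}(1) = h_i^\pi(1) + 2, \\
\tilde{f}_i(\pi) \neq 0 &\Longrightarrow m_i^{\tilde{f}_i(\pi)} = m_i^\pi, \quad h_i^{\tilde{f}_i(\pi)}(1) = h_i^\pi(1) - 2.
\end{align*}
I would establish these by splitting the analysis of $h_i^{\tilde{e}_i(\pi)}$ (resp.\ $h_i^{\tilde{f}_i(\pi)}$) over the three intervals $[0,t_0]$, $[t_0,t_1]$, $[t_1,1]$: on the first interval the height is unchanged; on the middle interval it becomes $-h_i^\pi(t) + 2 h_i^\pi(t_0)$, whose extrema on $[t_0,t_1]$ can be read off from the extrema of $h_i^\pi$ there; on the last interval it is shifted by $\pm 2$. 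The delicate point, which I expect to be the main obstacle, is to justify that the minimum is achieved and has the claimed value -- one must argue that the shifted tail cannot produce a new minimum strictly less than $m_i^\pi + 1$ (in the $\tilde{e}_i$ case), and this uses precisely the defining properties of $t_1$ as the first point where the minimum is attained.

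Once these two height-identities are in hand, everything else falls out by induction. Namely, the maximality definitions give $\varepsilon_i(\pi) = -m_i^\pi$ and $\varphi_i(\pi) = h_i^\pi(1) - m_i^\pi$; from these, $\varphi_i(\pi) - \varepsilon_i(\pi) = h_i^\pi(1) = \langle \wt(\pi), \alpha_i^\vee \rangle$, which is axiom (1), and axioms (4), (5) follow immediately from the height-identities applied to $\tilde{e}_i(\pi)$ or $\tilde{f}_i(\pi)$. At the very end I should remark that one needs to check that $\tilde{e}_i(\pi), \tilde{f}_i(\pi) \in \mathbb{B}(\lambda) \sqcup \{0\}$, i.e., that the operators preserve the LS-path structure; this is the content of the lemma quoted just above the theorem, so I may invoke it.
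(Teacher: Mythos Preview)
The paper does not prove this theorem; it simply cites \cite{Littelmann_path} and moves on. Your outline is the standard direct verification of the crystal axioms from the piecewise definition of the root operators, which is indeed how Littelmann's argument goes, so in spirit you are aligned with the cited source.

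There is, however, a genuine slip in your key technical lemma. You claim
\[
\tilde{f}_i(\pi) \neq 0 \ \Longrightarrow \ m_i^{\tilde{f}_i(\pi)} = m_i^\pi,
\]
but the correct identity is $m_i^{\tilde{f}_i(\pi)} = m_i^\pi - 1$. Indeed, with $t_0 = \max\{t : h_i^\pi(t) = m_i^\pi\}$ and $t_1 = \min\{t \geq t_0 : h_i^\pi(t) = m_i^\pi + 1\}$, on $[t_0,t_1]$ one has $m_i^\pi \leq h_i^\pi(t) \leq m_i^\pi + 1$, so the reflected height $2m_i^\pi - h_i^\pi(t)$ ranges in $[m_i^\pi - 1, m_i^\pi]$ and actually attains $m_i^\pi - 1$ at $t_1$; on $[t_1,1]$ the shifted height $h_i^\pi(t) - 2$ is bounded below by $m_i^\pi - 1$ because $h_i^\pi > m_i^\pi$ there and local minima of $h_i^\pi$ are integers. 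Your version would give $\varepsilon_i(\tilde{f}_i(\pi)) = -m_i^\pi = \varepsilon_i(\pi)$ and $\varphi_i(\tilde{f}_i(\pi)) = \varphi_i(\pi) - 2$, contradicting the very axioms you are trying to verify. With the corrected formula the rest of your argument goes through as written: $\varepsilon_i(\pi) = -m_i^\pi$, $\varphi_i(\pi) = h_i^\pi(1) - m_i^\pi$, and axioms (1), (4), (5) follow.
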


Moreover, we know the following.

\begin{thm}[{\cite[Theorem 4.1]{Kashiwara2}, \cite[Corollary 6.4.27]{Joseph}}]
The crystal $\mathbb{B}(\lambda)$ is isomorphic to $\mathcal{B}(\lambda)$ as crystals.
\end{thm}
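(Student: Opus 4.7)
The plan is to invoke the uniqueness theorem for connected normal crystals with a prescribed highest weight element, and to verify that Littelmann's path crystal $\mathbb{B}(\lambda)$ fits into this framework. Concretely, I need to establish three things about $\mathbb{B}(\lambda)$: (a) it contains a unique highest weight element of weight $\lambda$; (b) it is connected, i.e., every LS-path can be obtained from that highest weight element by a sequence of lowering root operators $\tilde{f}_{i_1}, \ldots, \tilde{f}_{i_r}$; and (c) the normality axioms (already partly subsumed by the preceding theorem that $\mathbb{B}(\lambda)$ is a crystal) hold, together with compatibility with tensor products of paths.

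For (a), I would take the straight-line path $\pi_\lambda \colon [0,1] \to P_\mathbb{R}$, $t \mapsto t\lambda$. Its height function $h_i^{\pi_\lambda}(t) = t \langle \lambda, \alpha_i^\vee\rangle$ is nondecreasing with minimum $0$ on $[0,1]$, so by the definition of $\tilde{e}_i$ one obtains $\tilde{e}_i(\pi_\lambda) = 0$ for every $i \in I$; moreover $\wt(\pi_\lambda) = \pi_\lambda(1) = \lambda$. For (c), the identities $\varphi_i(\pi) = \max\{n \geq 0 : \tilde{f}_i^n(\pi) \neq 0\}$ and $\varepsilon_i(\pi) = \max\{n \geq 0 : \tilde{e}_i^n(\pi) \neq 0\}$ hold by definition, and the relation $\varphi_i(\pi) - \varepsilon_i(\pi) = \langle \wt(\pi), \alpha_i^\vee\rangle$ follows from the geometric identities $h_i^\pi(1) = \langle \wt(\pi), \alpha_i^\vee\rangle$, $m_i^\pi = -\varepsilon_i(\pi)$, and $h_i^\pi(1) - m_i^\pi = \varphi_i(\pi)$, all of which can be read off inductively from the piecewise-linear definition of the root operators. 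The tensor product compatibility needed for Joseph's theorem comes from Littelmann's path-concatenation theorem, which realizes the tensor product of crystals by concatenation of LS-paths.

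The main obstacle is step (b), the connectedness of $\mathbb{B}(\lambda)$. The plan is Littelmann's inductive argument: for $\pi = (\nu_1 > \cdots > \nu_r; \underline{a}) \neq \pi_\lambda$, one locates $i \in I$ with $m_i^\pi < 0$, guaranteeing $\tilde{e}_i(\pi) \neq 0$. One then argues that each raising operator $\tilde{e}_i$ strictly decreases a carefully chosen numerical invariant of the LS-chain, such as $\sum_{k} \dist(\lambda, \nu_k)$. Iterating $\tilde{e}_i$'s must therefore terminate, and the only possible terminal point is $\pi_\lambda$ itself. The subtlety is to show that each application of $\tilde{e}_i$ both preserves the LS property and strictly decreases the chosen invariant; this uses the compatibility of the simple reflections $s_i$ with the partial order $>$ on $W\lambda$ that is built into the definition of LS-chains, together with the careful manipulation of $\sigma$-chains under root operators in Littelmann's original analysis.

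Once (a), (b), (c) are established, the uniqueness theorem of Joseph \cite[Corollary 6.4.27]{Joseph} produces a unique crystal isomorphism $\mathbb{B}(\lambda) \simeq \mathcal{B}(\lambda)$ sending $\pi_\lambda$ to $b_\lambda$. Alternatively, one may invoke Kashiwara's theorem \cite[Theorem 4.1]{Kashiwara2} to define this morphism directly by $\tilde{f}_{i_r} \cdots \tilde{f}_{i_1}(b_\lambda) \mapsto \tilde{f}_{i_r} \cdots \tilde{f}_{i_1}(\pi_\lambda)$ and to establish well-definedness and bijectivity through Littelmann's character formula $\ch(\mathbb{B}(\lambda)) = \ch(L(\lambda)) = \ch(\mathcal{B}(\lambda))$, combined with the connectedness and normality of both sides.
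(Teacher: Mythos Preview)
The paper does not prove this statement at all; it is quoted as a known result with citations to \cite[Theorem 4.1]{Kashiwara2} and \cite[Corollary 6.4.27]{Joseph}, and no argument is given. So there is no ``paper's own proof'' to compare your proposal against.

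That said, your outline is a faithful sketch of the standard approach found in those references: verify that $\mathbb{B}(\lambda)$ is a connected normal crystal with highest weight element $\pi_\lambda$, then invoke the uniqueness theorem for such crystals. Your identification of the key steps (highest weight element, normality, connectedness via Littelmann's inductive argument on LS-chains) is accurate, and you correctly flag connectedness as the nontrivial part. One small caution: the invariant you suggest, $\sum_k \dist(\lambda,\nu_k)$, is not quite the quantity Littelmann uses; his argument in \cite{Littelmann_path} proceeds by showing that applying $\tilde{e}_i$ either shortens the chain or replaces the initial direction $\nu_1$ by something strictly smaller in the order on $W\lambda$, and then inducts on the length of the chain and on $\nu_1$. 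Your proposed invariant would work in spirit but you would need to check carefully that $\tilde{e}_i$ actually decreases it in all cases, which is not immediate from the root-operator formulas.
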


From now on, we identify $\mathbb{B}(\lambda)$ with $\mathcal{B}(\lambda)$ for $\lambda \in P^+$. For $\lambda \in P^+$, we define the path $\pi^\lambda$ by $\pi^\lambda (t) := t\lambda$ for $t \in [0, 1]$. Then, for $\lambda \in P^+$, we have $\pi^\lambda \in \mathbb{B}(\lambda)$; we identify $\pi^\lambda \in \mathbb{B}(\lambda)$ with $b_\lambda \in \mathcal{B}(\lambda)$ for $\lambda \in P^+$.

For paths $\pi, \pi^\prime$, we define the {\it concatenation} $\pi \ast \pi^\prime$ of $\pi$, $\pi^\prime$ by
\begin{align*}
\pi \ast \pi^\prime (t) := \begin{cases} \pi (2t) & (0 \leq t \leq 1/2), \\
\pi(1) + \pi^\prime(2t-1) & (1/2 \leq t \leq 1). \end{cases}
\end{align*}
If all of minimums of the functions $h_i^{\pi}(t)$ and $h_i^{\pi^\prime}(t)$ are integers, and $\pi(1)$ and $\pi^\prime(1)$ are contained in $P$, then the path $\pi \ast \pi^\prime$ also satisfies that all of minimums of the function $h_i^{\pi \ast \pi^\prime}(t)$ are integers and that $(\pi \ast \pi^\prime)(1) \in P$. Hence we can use the definition of $\tilde{e}_i$, $\tilde{f}_i$, $\varphi_i$, $\varepsilon_i$, $i \in I$, above to define a crystal structure on $\mathcal{B}(\lambda) \ast \mathcal{B}(\mu) := \{ \pi \ast \pi^\prime \ | \ \pi \in \mathcal{B}(\lambda), \ \pi^\prime \in \mathcal{B}(\mu) \}$ for $\lambda, \mu \in P^+$.

\begin{thm}[{\cite[Section 2]{Littelmann_path}}]
For $\lambda, \mu \in P^+$, the set $\mathcal{B}(\lambda) \ast \mathcal{B}(\mu)$, equipped with maps $\wt$, $\{ \tilde{f}_i \}_{i \in I}$, $\{ \tilde{e}_i \}_{i \in I}$, $\{ \varphi_i \}_{i \in I}$, $\{ \varepsilon_i \}_{i \in I}$ defined as above, is a crystal. Moreover, we have $\mathcal{B}(\lambda) \ast \mathcal{B}(\mu) = \mathcal{B}(\lambda) \otimes \mathcal{B}(\mu)$ by identifying $\pi_1 \ast \pi_2 \in \mathcal{B}(\lambda) \ast \mathcal{B}(\mu)$ with $\pi_1 \otimes \pi_2 \in \mathcal{B}(\lambda) \otimes \mathcal{B}(\mu)$.
\end{thm}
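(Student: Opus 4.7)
The plan is to reduce everything to a careful analysis of the height function $h_i^{\pi_1 \ast \pi_2}$, since all five crystal maps on $\mathcal{B}(\lambda) \ast \mathcal{B}(\mu)$ are determined by the height functions of constituent paths.

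First I would compute directly from the definition of concatenation that
\begin{align*}
h_i^{\pi_1 \ast \pi_2}(t) = \begin{cases} h_i^{\pi_1}(2t) & (0 \leq t \leq 1/2), \\ \langle \wt(\pi_1), \alpha_i^\vee \rangle + h_i^{\pi_2}(2t-1) & (1/2 \leq t \leq 1), \end{cases}
\end{align*}
and hence $m_i^{\pi_1 \ast \pi_2} = \min \bigl( m_i^{\pi_1}, \, \langle \wt(\pi_1), \alpha_i^\vee \rangle + m_i^{\pi_2} \bigr)$. Using the elementary identities $\varepsilon_i(\pi) = -m_i^{\pi}$ and $\varphi_i(\pi) = \langle \wt(\pi), \alpha_i^\vee \rangle - m_i^{\pi}$ for LS-paths (read off from the definition of the root operators), this immediately yields the tensor product formulas
\begin{align*}
\varepsilon_i(\pi_1 \ast \pi_2) &= \max \bigl( \varepsilon_i(\pi_1), \, \varepsilon_i(\pi_2) - \langle \wt(\pi_1), \alpha_i^\vee \rangle \bigr), \\
\varphi_i(\pi_1 \ast \pi_2) &= \max \bigl( \varphi_i(\pi_2), \, \varphi_i(\pi_1) + \langle \wt(\pi_2), \alpha_i^\vee \rangle \bigr),
\end{align*}
and the additivity of $\wt$ is immediate from $(\pi_1 \ast \pi_2)(1) = \pi_1(1) + \pi_2(1)$.

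Next I would establish that the path operator $\tilde{e}_i$ on $\pi_1 \ast \pi_2$ realizes the tensor product rule, by case analysis on where the cut-points $t_0, t_1$ fall. In the case $\varphi_i(\pi_1) \geq \varepsilon_i(\pi_2)$, the inequality unwinds to $m_i^{\pi_1} \leq \langle \wt(\pi_1), \alpha_i^\vee \rangle + m_i^{\pi_2}$, so the minimum of $h_i^{\pi_1 \ast \pi_2}$ is first attained inside $[0, 1/2]$. Hence both $t_0$ and $t_1$ for the concatenation lie in $[0, 1/2]$, the modification described by the definition of $\tilde{e}_i$ affects only the first half, and a direct substitution shows $\tilde{e}_i(\pi_1 \ast \pi_2) = \tilde{e}_i(\pi_1) \ast \pi_2$ (including the case $\tilde{e}_i(\pi_1) = 0$, which forces $\tilde{e}_i(\pi_1 \ast \pi_2) = 0$). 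In the opposite case $\varphi_i(\pi_1) < \varepsilon_i(\pi_2)$, the minimum is achieved only in the second half, so $t_1 > 1/2$; since $\langle \wt(\pi_1), \alpha_i^\vee \rangle \geq m_i^{\pi_1 \ast \pi_2} + 1$ (a direct inequality from $m_i^{\pi_2} \leq -1$), the intermediate value theorem applied on $[1/2, t_1]$ forces $t_0 \geq 1/2$, and one concludes $\tilde{e}_i(\pi_1 \ast \pi_2) = \pi_1 \ast \tilde{e}_i(\pi_2)$. A parallel analysis with the dichotomy $\varphi_i(\pi_1) > \varepsilon_i(\pi_2)$ versus $\varphi_i(\pi_1) \leq \varepsilon_i(\pi_2)$ handles $\tilde{f}_i$.

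Once the formulas for the five maps on $\pi_1 \ast \pi_2$ match the tensor product definitions verbatim, the crystal axioms for $\mathcal{B}(\lambda) \ast \mathcal{B}(\mu)$ follow from the crystal axioms for $\mathcal{B}(\lambda)$ and $\mathcal{B}(\mu)$ (and the fact that tensor products of crystals are crystals). The concatenation is closed, since on each case the result is again an LS-path concatenation, and the identification $\pi_1 \ast \pi_2 \leftrightarrow \pi_1 \otimes \pi_2$ is then a bijection compatible with all five structure maps.

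The main obstacle is the case analysis for $\tilde{e}_i$ and $\tilde{f}_i$: one must carefully decide the boundary case (when the minimum is attained simultaneously at both halves, i.e. when $\varphi_i(\pi_1) = \varepsilon_i(\pi_2)$), verify that $t_0$ and $t_1$ fall on the correct side of $1/2$, and treat the degenerate situations producing $0$. The intermediate-value argument locating $t_0$ in the second half is the sole nontrivial step; everything else is a direct substitution once the height function decomposition is in hand.
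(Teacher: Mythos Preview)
The paper does not give its own proof of this theorem; it simply cites it from \cite[Section 2]{Littelmann_path} and moves on. So there is no in-paper argument to compare against.

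That said, your proposal is correct and is essentially the argument Littelmann gives in the cited reference: decompose the height function of the concatenation, read off $\varepsilon_i$ and $\varphi_i$ from the minimum, and do a case split on whether $\varphi_i(\pi_1) \geq \varepsilon_i(\pi_2)$ to locate the cut-points $t_0, t_1$ on the correct half of $[0,1]$. The only point worth tightening is the boundary case $\varphi_i(\pi_1) = \varepsilon_i(\pi_2)$ for $\tilde{e}_i$: there the global minimum of $h_i^{\pi_1\ast\pi_2}$ is attained in both halves, and one must use that $t_1$ is defined as the \emph{first} time the minimum is hit to conclude $t_1 \leq 1/2$ (hence the reflection happens on the first factor). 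Dually, for $\tilde{f}_i$ in the tie case one uses that $t_0$ is the \emph{last} time the minimum is hit. You mention the boundary case as an obstacle but do not spell this out; once you do, the argument is complete.
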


Now, we describe the Littlewood-Richardson rule for crystal bases in terms of LS-paths, called the {\it generalized Littlewood-Richardson rule}.

\begin{defn}
Let $\lambda \in P^+$. A path $\pi$ is {\it $\lambda$-dominant} if $\langle \lambda + \pi (t), \alpha_i^\vee \rangle \geq 0$ for all $i \in I$ and $t \in [0, 1]$.
\end{defn}

For $\lambda, \mu \in P^+$, we denote the set of $\lambda$-dominant LS-paths of shape $\mu$ by $\mathcal{B}(\mu)^\lambda$. Also, for $\lambda, \mu \in P^+$ and $\pi \in \mathcal{B}(\mu)^\lambda$, the set $C(\pi, \lambda, \mu)$ is defined to be the connected component of $\mathcal{B}(\lambda) \otimes \mathcal{B}(\mu)$ containing $\pi^\lambda \otimes \pi$; we obtain the crystal structure on $C(\pi, \lambda, \mu)$ by restricting that of $\mathcal{B}(\lambda) \otimes \mathcal{B}(\mu)$.

\begin{thm}[{\cite[Section 10]{Littelmann_path}}]\label{LRLSver}
Let $\lambda, \mu$ be dominant integral weights. It holds that
\begin{align*}
\mathcal{B}(\lambda) \otimes \mathcal{B}(\mu) = \bigsqcup_{\pi \in \mathcal{B}(\mu)^\lambda} C(\pi, \lambda, \mu).
\end{align*}
Moreover, the connected component $C(\pi, \lambda, \mu)$ is isomorphic to $\mathcal{B}(\lambda + \wt(\pi))$ as crystals.
\end{thm}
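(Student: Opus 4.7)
The plan is to combine a characterization of highest weight elements of the tensor product with the semisimplicity of the category of finite dimensional $U_q(\mathfrak{g})$-modules. Since $\mathcal{B}(\lambda) \otimes \mathcal{B}(\mu)$ is a crystal basis of $L(\lambda) \otimes L(\mu)$, Theorem \ref{ss} together with Theorem \ref{fdmod} forces each connected component of $\mathcal{B}(\lambda) \otimes \mathcal{B}(\mu)$ to be isomorphic to some $\mathcal{B}(\nu)$ with $\nu \in P^+$, and forces $\nu$ to equal the weight of the unique highest weight element of that component. Therefore the theorem reduces to identifying the highest weight elements; working with the concatenation realization $\mathcal{B}(\lambda) \otimes \mathcal{B}(\mu) = \mathcal{B}(\lambda) \ast \mathcal{B}(\mu)$, it suffices to show that they are exactly the paths $\pi^\lambda \ast \pi$ with $\pi \in \mathcal{B}(\mu)^\lambda$.

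Next I would analyze when $\tilde{e}_i(\pi_1 \ast \pi_2) = 0$ for every $i \in I$. By the tensor product rule, either $\varphi_i(\pi_1) \geq \varepsilon_i(\pi_2)$ and $\tilde{e}_i(\pi_1) = 0$, or $\varphi_i(\pi_1) < \varepsilon_i(\pi_2)$ and $\tilde{e}_i(\pi_2) = 0$; the latter alternative is impossible, since $\tilde{e}_i(\pi_2) = 0$ forces $\varepsilon_i(\pi_2) = 0$. Thus $\tilde{e}_i(\pi_1) = 0$ for every $i$, and since $\mathcal{B}(\lambda)$ is connected with unique highest weight element $\pi^\lambda$, this forces $\pi_1 = \pi^\lambda$. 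Using $\varphi_i(\pi^\lambda) = \langle \lambda, \alpha_i^\vee \rangle$, the condition collapses to the family of inequalities $\varepsilon_i(\pi_2) \leq \langle \lambda, \alpha_i^\vee \rangle$, $i \in I$.

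The key step is then to translate these inequalities into $\lambda$-dominance of $\pi_2$. Directly from the definition of the raising root operator given in Section \ref{LS}, one checks inductively that $\varepsilon_i(\pi) = -m_i^\pi$ for every LS-path $\pi$: each application of $\tilde{e}_i$ raises the minimum $m_i^\pi$ of the height function $h_i^\pi$ by exactly one, and $\tilde{e}_i(\pi) = 0$ precisely when $m_i^\pi = 0$. Consequently $\varepsilon_i(\pi_2) \leq \langle \lambda, \alpha_i^\vee \rangle$ becomes $h_i^{\pi_2}(t) \geq -\langle \lambda, \alpha_i^\vee \rangle$ for all $t \in [0, 1]$, which is exactly the defining condition for $\pi_2$ to be $\lambda$-dominant. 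Each such highest weight element $\pi^\lambda \ast \pi$ has weight $\lambda + \wt(\pi)$, whose dominance follows from the $\lambda$-dominance condition specialized at $t = 1$, yielding the asserted disjoint decomposition and the isomorphism $C(\pi, \lambda, \mu) \simeq \mathcal{B}(\lambda + \wt(\pi))$. The principal obstacle is the identity $\varepsilon_i(\pi) = -m_i^\pi$, whose proof requires a careful analysis of the breakpoints $t_0$ and $t_1$ appearing in the definition of $\tilde{e}_i$ together with verification that the minimum of $h_i^{\tilde{e}_i(\pi)}$ is precisely $m_i^\pi + 1$.
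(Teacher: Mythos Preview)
The paper does not supply its own proof of this statement; it is quoted as a known result from \cite[Section 10]{Littelmann_path}. Your argument is correct and complete in outline.

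That said, your route differs from Littelmann's original. You lean on representation theory: semisimplicity of the module category (Theorem~\ref{ss}) together with the fact that $\mathcal{B}(\lambda) \otimes \mathcal{B}(\mu)$ is a crystal basis of $L(\lambda) \otimes L(\mu)$ forces every connected component to be a highest weight crystal, reducing the problem to locating the highest weight elements. Littelmann, by contrast, works entirely inside the path model with no appeal to $U_q(\mathfrak{g})$-modules: he shows directly by path combinatorics that each $\pi_1 \ast \pi_2$ can be driven by raising operators to some $\pi^\lambda \ast \pi$ with $\pi$ a $\lambda$-dominant path, and constructs an explicit crystal isomorphism from that component to the path crystal of shape $\lambda + \wt(\pi)$. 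Your approach is shorter given the machinery already assembled in Sections~\ref{quantum_group} and~\ref{crystal}, but it silently uses the nontrivial compatibility theorem (Kashiwara) that the abstract tensor product of crystal bases agrees with the crystal basis of the tensor product module; Littelmann's argument is self-contained within the combinatorial model. The identity $\varepsilon_i(\pi) = -m_i^\pi$ that you flag is indeed the crux of the translation step, and it is established in \cite[Lemma~2.1]{Littelmann_path}.
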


At the end of this subsection, we describe Demazure crystals in terms of LS-paths. For details, see \cite{Kashiwara} and \cite{Littelmann_LR}.

\begin{defn}
Let $\pi$ be an LS-path. If $\pi$ corresponds to an LS-chain $(\nu_1 > \cdots > \nu_r;0 = a_0 < \cdots < a_r = 1)$, then we call $\nu_1$ the {\it initial direction} of $\pi$, and set $\iota (\pi) := \nu_1$.
\end{defn}

\begin{thm}
Let $\lambda \in P^+$ and $w \in W$. Then one has $\mathcal{B}_w(\lambda) = \{ \pi \in \mathcal{B}(\lambda) \ | \ \iota (\pi) \leq w \lambda \}$.
\end{thm}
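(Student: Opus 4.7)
The plan is to proceed by induction on $\ell(w)$, using Lemma \ref{f(Dem)} to climb in length and the path-theoretic definition of Kashiwara operators to track how the initial direction $\iota$ transforms. For the base case $w = e$, the crystal $\mathcal{B}_e(\lambda) = \{b_\lambda\} = \{\pi^\lambda\}$ is a singleton; on the other hand $\lambda$ is the unique minimum of the order $\leq$ on $W\lambda$ since $\langle \lambda, \beta^\vee\rangle \geq 0$ for all $\beta \in \Phi^+$ forbids any chain from strictly descending from $\lambda$, so $\{\pi \in \mathcal{B}(\lambda) : \iota(\pi) \leq \lambda\}$ contains exactly one path, namely $\pi^\lambda$.

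For the inductive step, given $w$ with $\ell(w) \geq 1$, pick $i \in I$ with $\ell(s_i w) < \ell(w)$ and set $w' := s_i w$, so that $\ell(s_i w') > \ell(w')$. By Lemma \ref{f(Dem)}(1) combined with the inductive hypothesis,
\begin{align*}
\mathcal{B}_w(\lambda) = \bigcup_{n \geq 0} \tilde{f}_i^n(\mathcal{B}_{w'}(\lambda)) \setminus \{0\} = \bigcup_{n \geq 0} \tilde{f}_i^n\bigl(\{\pi : \iota(\pi) \leq w'\lambda\}\bigr) \setminus \{0\},
\end{align*}
so the task reduces to identifying this last set with $\{\pi : \iota(\pi) \leq w\lambda\}$.

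The engine of the argument is the sublemma that if $\tilde{f}_i(\pi) \neq 0$ then $\iota(\tilde{f}_i(\pi)) \in \{\iota(\pi), s_i\iota(\pi)\}$, with the reflection occurring precisely when the folding interval $[t_0, t_1]$ from the definition of $\tilde{f}_i$ has $t_0 = 0$, which forces $\langle \iota(\pi), \alpha_i^\vee\rangle > 0$; the analogous statement for $\tilde{e}_i$ (with sign reversed) also holds. This should be coupled with the order-theoretic lemma that, when $\ell(s_i w') > \ell(w')$,
\begin{align*}
\{\mu \in W\lambda : \mu \leq s_i w'\lambda\} = \{\mu : \mu \leq w'\lambda\} \cup \{s_i\mu : \mu \leq w'\lambda,\ \langle \mu, \alpha_i^\vee\rangle > 0\}.
\end{align*}
Granting both, the inclusion $(\subseteq)$ follows because each application of $\tilde{f}_i$ to an element of $\{\pi : \iota(\pi) \leq w'\lambda\}$ either fixes $\iota$ or reflects it by $s_i$, and both outcomes produce initial directions lying in the right-hand side; for $(\supseteq)$, starting from $\pi$ with $\iota(\pi) \leq s_i w'\lambda$ and applying $\tilde{e}_i^{\max}$ yields an element with $\iota \leq w'\lambda$ via the sublemma, after which $\pi$ is recovered as $\tilde{f}_i^n(\tilde{e}_i^{\max}(\pi))$ for some $n \geq 0$. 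The main obstacle is the order-theoretic lemma, since $\leq$ on $W\lambda$ is defined intrinsically by chains rather than through the Weyl group structure; I would reduce it to the Bruhat order on $W/W_\lambda$ through the bijection $\lfloor v\rfloor^\lambda \leftrightarrow v\lambda$ between minimal coset representatives and $W\lambda$, and then invoke the standard lifting property of the Bruhat order for $w'$ and the simple reflection $s_i$, so that the claimed decomposition becomes the familiar splitting of the Bruhat interval below $s_i w'$ into the interval below $w'$ together with its $s_i$-lift.
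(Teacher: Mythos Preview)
The paper does not prove this theorem; it is quoted as a known result with the reader referred to \cite{Kashiwara} and \cite{Littelmann_LR} for details. So there is no in-paper proof to compare against.

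Your inductive strategy is essentially the one used in Littelmann's original treatment and is sound. The base case is correct, and the reduction via Lemma~\ref{f(Dem)}(1) is the right move. Your sublemma on the effect of $\tilde{f}_i$ and $\tilde{e}_i$ on $\iota$ is correct as stated; one should note additionally that along an $i$-string the initial direction flips at most once (after the flip the new minimum $m_i$ drops below zero, so $t_0 > 0$ thereafter), which is what makes the tracking of $\iota$ under iterated $\tilde{f}_i$ or $\tilde{e}_i$ manageable. For the $(\supseteq)$ direction you also implicitly use that $\tilde{e}_i(\pi') = 0$ forces $\langle \iota(\pi'), \alpha_i^\vee \rangle \geq 0$, which pins down which of the two possible values $\iota(\pi')$ takes; this is immediate from $m_i^{\pi'} = 0$ and the shape of $h_i^{\pi'}$ near $t=0$.

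The order-theoretic lemma is the one point that deserves care. The order $\leq$ on $W\lambda$ defined in the paper via chains is indeed the Bruhat order on $W/W_\lambda$ transported through the bijection with minimal coset representatives (this is Deodhar's characterisation), and once that identification is made your decomposition is exactly the parabolic lifting property for a simple reflection $s_i$ with $s_i w' > w'$. So the reduction you outline is valid, but you should be explicit that the equivalence of the chain order with the Bruhat order on the quotient is itself a nontrivial input (it is standard, but it is where the combinatorics of $W$ enters).
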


\subsection{Tensor product of a highest weight element and a Demazure crystal}

In \cite{Magyar}, it is proved that a tensor product of a highest weight element and a Demazure crystal decomposes into a disjoint union of Demazure crystals. 

First, we introduce some notation which are used through this paper. For $\lambda, \mu \in P^+$, we set $\mathcal{B}_w(\mu)^\lambda := \mathcal{B}(\mu)^\lambda \cap \mathcal{B}_w(\mu)$.

\begin{defn}
Let $\lambda, \mu$ be dominant integral weights, and $v, w$ elements of the Weyl group $W$. For $\pi \in \mathcal{B}_w(\mu)^\lambda$, we denote by $C(\pi, v, w, \lambda, \mu)$ the connected component of $\mathcal{B}_v(\lambda) \otimes \mathcal{B}_w(\mu)$ containing $\pi^\lambda \otimes \pi$.
\end{defn}

We omit $w, \lambda$ and $\mu$ from $C(\pi, v, w, \lambda, \mu)$, that is, we simply write $C(\pi, v)$ for $C(\pi, v, w, \lambda, \mu)$.

Note that it follows that
\begin{align*}
\mathcal{B}_v(\lambda) \otimes \mathcal{B}_w(\mu) = \bigsqcup_{\pi \in \mathcal{B}_w(\mu)^\lambda} C(\pi, v)
\end{align*}
by Theorem \ref{LRLSver}.

Let $\lambda, \mu$ be dominant integral weights, and $w \in W$. For $\pi \in \mathcal{B}_w(\mu)^\lambda$, we define $w(\pi) \in W$ as follows.

As the first step, we take a decomposition of [0,1]. First, let $W(t) := \{ w \in W \ | \ w(\lambda + \pi(t)) = \lambda + \pi(t) \}$ be a stabilizer of $\lambda + \pi(t) \in P_{\mathbb{R}}$ for $t \in [0, 1]$. Then there exist intervals $I_1, \ldots, I_q$ such that $[0, 1] = I_1 \sqcup \cdots \sqcup I_q$, and such that for all $i \in \{1, \ldots, q \}$ and $t, t^\prime \in I_i$ one has $W(t) = W(t^\prime)$, and $t < t^\prime$ if $t \in I_i$, $t^\prime \in I_j$ for each $i, j \in \{1, \ldots, q \}$ with $i < j$. We take such intervals so that $q$ is minimal.

Now we define $w(\pi)$. We denote the Bruhat order on $W$ by $\leq$. First, we set $w_{q+1} := e$, where $e$ is the identity element of $W$. When $w_j$ is defined for $j \in \{3, 4, \ldots, q+1\}$, let $w_{j-1} := \max (W(I_{j-1})w_j)$. Assume that $w_2, \ldots, w_{q+1}$ are defined. Then we set $u_1 := \max \{ u \in W(I_1) \ | \ u \tau_1 \leq w W_\mu \}$ and $w_1 := \max\{ uw_2 \ | \ u \leq u_1 \}$. Here, $\tau_1$ is defined by $\tau_1 := \min \{ w \in W \ | \ w \mu = \iota (\pi) \}$. Recall that $W_\mu = \{ w \in W \ | \ w\mu = \mu \}$ is the stabilizer of $\mu$. Finally, we set $w(\pi) := w_1$. We must discuss the existence of $u_1$. Also, we have to check that $w(\pi)$ is well-defined. For details, see \cite{Magyar}.

We can state the decomposition theorem for a tensor product of a highest weight element and a Demazure crystal. Recall that $\mathcal{B}_e(\lambda) = \{ \pi^\lambda \}$.

\begin{thm}[{\cite[Proposition 12]{Magyar}}]\label{1tensDem}
Let $\lambda$ and $\mu$ be dominant integral weights, and $w$ an element of $W$. For $\pi \in \mathcal{B}_w(\mu)^\lambda$, the connected component $C(\pi, e)$ is isomorphic to $\mathcal{B}_{w(\pi)}(\lambda + \wt(\pi))$. Hence one has the following isomorphism:
\begin{align*}
\mathcal{B}_e(\lambda) \otimes \mathcal{B}_w(\mu) \simeq \bigsqcup_{\pi \in \mathcal{B}_w(\mu)^\lambda} \mathcal{B}_{w(\pi)}(\lambda + \wt(\pi)).
\end{align*}
\end{thm}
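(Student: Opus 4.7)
The plan is to proceed by induction on $\ell(w)$, combining the generalized Littlewood-Richardson rule (Theorem \ref{LRLSver}), the LS-path characterization $\mathcal{B}_w(\nu) = \{\eta \in \mathcal{B}(\nu) \mid \iota(\eta) \leq w\nu\}$, and the recursive description of Demazure crystals in Lemma \ref{f(Dem)}. For the base case $w = e$, one has $\mathcal{B}_e(\mu) = \{\pi^\mu\}$, so $\mathcal{B}_e(\lambda) \otimes \mathcal{B}_e(\mu) = \{\pi^\lambda \otimes \pi^\mu\}$ is a singleton, whose image under the concatenation identification is the highest weight element $b_{\lambda+\mu}$ of $\mathcal{B}(\lambda+\mu)$. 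A direct inspection of the recipe defining $w(\pi^\mu)$ shows $w(\pi^\mu) \in W_{\lambda+\mu}$, so $\mathcal{B}_{w(\pi^\mu)}(\lambda+\mu) = \{b_{\lambda+\mu}\}$, matching the left-hand side.

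For the inductive step, pick $k \in I$ with $\ell(s_k w) < \ell(w)$ and set $w' := s_k w$, so that $w = s_k w'$ and $\ell(w') = \ell(w) - 1$. By Lemma \ref{f(Dem)}(1), $\mathcal{B}_w(\mu) = \bigcup_{n \geq 0} \tilde{f}_k^n(\mathcal{B}_{w'}(\mu)) \setminus \{0\}$, while the inductive hypothesis gives the decomposition $\mathcal{B}_e(\lambda) \otimes \mathcal{B}_{w'}(\mu) \simeq \bigsqcup_{\pi \in \mathcal{B}_{w'}(\mu)^\lambda} \mathcal{B}_{w'(\pi)}(\lambda+\wt(\pi))$. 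I split $\mathcal{B}_w(\mu)^\lambda$ into \emph{old} paths in $\mathcal{B}_{w'}(\mu)$ and \emph{new} paths in $\mathcal{B}_w(\mu) \setminus \mathcal{B}_{w'}(\mu)$, noting that a new $\lambda$-dominant path arises from some $\pi_0 \in \mathcal{B}_{w'}(\mu)$ by applying $\tilde{f}_k^m$ in the same $k$-string. For an old path $\pi$, the component $C(\pi, e)$ inside $\mathcal{B}_e(\lambda) \otimes \mathcal{B}_w(\mu)$ enlarges from $C(\pi, e, w', \lambda, \mu)$ by the $\tilde{f}_k^n$-orbits admitted inside the ambient crystal $\mathcal{B}(\lambda + \wt(\pi))$, and Lemma \ref{f(Dem)} applied in $\mathcal{B}(\lambda + \wt(\pi))$ shows the result equals $\mathcal{B}_{s_k w'(\pi)}(\lambda+\wt(\pi))$ when $\ell(s_k w'(\pi)) > \ell(w'(\pi))$ and $\mathcal{B}_{w'(\pi)}(\lambda+\wt(\pi))$ otherwise. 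A direct inspection of the recipe for $w(\pi)$, using that the stabilizer partition $\{W(t)\}$ depends only on $\pi$ and that the only difference between $w'$ and $w$ lies in the Bruhat constraint $u_1 \leq wW_\mu$, confirms these coincide with $\mathcal{B}_{w(\pi)}(\lambda+\wt(\pi))$.

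The main obstacle is the new-path case. For such a $\pi$ one must show that the connected component $C(\pi, e)$, a priori merely a subset of $\mathcal{B}(\lambda + \wt(\pi))$, equals the Demazure crystal $\mathcal{B}_{w(\pi)}(\lambda+\wt(\pi))$, i.e., consists precisely of those LS-paths $\eta$ with $\iota(\eta) \leq w(\pi)(\lambda+\wt(\pi))$. This requires a careful translation, through the LR bijection between $C(\pi, \lambda, \mu)$ and $\mathcal{B}(\lambda+\wt(\pi))$, of the membership condition $\iota(\pi') \leq w\mu$ on the second tensor factor into a Bruhat-order bound on $\iota(\eta)$ for the corresponding path $\eta$. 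One must then verify that the Bruhat-maximum element $w_1 = w(\pi)$ produced by the recipe from \cite{Magyar} encodes exactly the set of permissible initial directions. This is where the intricate construction of $w(\pi)$ via the Bruhat-maxima $w_j$ over the stabilizer partition and the auxiliary element $u_1$ becomes essential, and is the heart of the argument.
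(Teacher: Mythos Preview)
The paper does not prove this theorem; it is quoted from \cite[Proposition~12]{Magyar} and used as a black box, so there is no in-paper argument to compare against.

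Your proposal is a plan rather than a proof, and the gap you flag is genuine. In the ``new-path'' case you state what must be checked---that under the isomorphism $C(\pi,\lambda,\mu)\simeq\mathcal B(\lambda+\wt(\pi))$ the condition $\pi'\in\mathcal B_w(\mu)$ on the second tensor factor translates into the Bruhat bound $\iota(\eta)\le w(\pi)(\lambda+\wt(\pi))$---and then declare this ``the heart of the argument'' without doing it. That translation \emph{is} the content of the theorem: the recipe for $w(\pi)$ via the stabilizer partition $\{W(t)\}$ and the chain of Bruhat maxima $w_{q+1},\dots,w_1$ was engineered in \cite{Magyar} precisely to encode the set of admissible initial directions, and the verification there passes through standard monomial theory for Bott--Samelson varieties rather than a short combinatorial check.

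The ``old-path'' case also hides work. You assert that $C(\pi,e)$ computed relative to $w$ is obtained from $C(\pi,e)$ relative to $w'$ by saturating under $\tilde f_k$, but this is not automatic: an element $\pi^\lambda\otimes\pi'$ with $\pi'\in\mathcal B_w(\mu)\setminus\mathcal B_{w'}(\mu)$ can be joined to the old component by an edge of colour $j\ne k$, since elements added in the $k$-saturation still sit in $j$-strings for every $j$. Moreover, your appeal to ``a direct inspection of the recipe'' to match $w(\pi)$ with $s_k w'(\pi)$ or $w'(\pi)$ is not carried out; tracing how the replacement $w'\mapsto s_k w'$ in the constraint $u\tau_1\le wW_\mu$ defining $u_1$ propagates through the maxima $w_1,\dots,w_{q+1}$ is itself a nontrivial Bruhat-order computation. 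As written, both branches of the induction rest on claims that are stated but not established.
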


The first main result of this paper is a generalization of this theorem, which is given in the next section.

\section{Tensor Products of Demazure Crystals}\label{mainsec}

In this section, we discuss the condition for every connected component of a tensor product of Demazure crystals to be isomorphic to a Demazure crystal.

Before the discussion, we introduce some notation. For $w \in W$, we set $D_L(w) := \{ i \in I \ | \ \ell(s_i w)< \ell(w) \}$, the {\it left descent set}. Then we define $W_w := W_{D_L(w)} = \langle s_i \ | \ i \in D_L(w) \rangle$, which is a parabolic subgroup of $W$. 

Next, we introduce notation for specific crystals. A {\it word} is a sequence $\text{\boldmath $i$} = (i_1 , \ldots, i_l)$ with $i_j \in I$ for $j \in \{ 1, \ldots, l \}$.

\begin{defn}
Let $\lambda, \mu \in P^+$, and $w \in W$.
\begin{enumerate}[{(}1{)}]
\item For a word $\text{\boldmath $i$} = (i_1, \ldots, i_l)$, we set
\begin{align*}
\mathcal{B}_{\text{\boldmath $i$}, w, \lambda, \mu} := \bigcup_{a_1, \ldots, a_l \geq 0} \tilde{f}_{i_1}^{a_1} \cdots \tilde{f}_{i_l}^{a_l} (\mathcal{B}_e(\lambda) \otimes \mathcal{B}_w(\mu) ) \setminus \{ 0 \}.
\end{align*}

\item For a word $\text{\boldmath $i$}$ and a path $\pi \in \mathcal{B}_w(\mu)^\lambda$, we denote by $D(\pi, \text{\boldmath $i$}, w, \lambda, \mu)$ the connected component of $\mathcal{B}_{\text{\boldmath $i$}, w, \lambda, \mu}$ containing $\pi^\lambda \otimes \pi$.
\end{enumerate}
\end{defn}

We omit $w, \lambda$ and $\mu$ from $D(\pi, \text{\boldmath $i$}, w, \lambda, \mu)$ as for $C(\pi, v)$, namely, we write $D(\pi, \text{\boldmath $i$})$ instead of $D(\pi, \text{\boldmath $i$}, w, \lambda, \mu)$.

\begin{rem}\label{rem_for_genDemcrys} Let $\lambda, \mu \in P^+$, and $w \in W$. Take a word {\boldmath $i$}.
\begin{enumerate}[{(}1{)}]
\item We can verify by direct computation that the set $\mathcal{B}_{\text{\boldmath $i$}, w, \lambda, \mu}$ is identical to a tensor product of some highest weight element and some generalized Demazure crystal. For details about generalized Demazure crystals, see \cite{Magyar}.

\item The connected component $D(\pi, \text{\boldmath $i$})$ is isomorphic to some Demazure crystal because each generalized Demazure crystal is a disjoint union of Demazure crystals by Theorem 2 of \cite{Magyar}.

\item If {\boldmath $i$} is a reduced word for $v \in W$, we have $C(\pi, v) \subset D(\pi, \text{\boldmath $i$})$ for $\pi \in \mathcal{B}_w(\mu)^\lambda$.
\end{enumerate}
\end{rem}

Now we consider a sufficient condition. We prove the following theorem.

\begin{thm}\label{mainthm_1}
Let $\lambda$ and $\mu$ be dominant integral weights, and $v$, $w$ elements of $W$. Fix a reduced expression $\lfloor v \rfloor^\lambda = s_{i_1} \cdots s_{i_l}$. If $\lfloor v \rfloor^\lambda \in W_{\lceil w \rceil^\mu}$, then we have $\mathcal{B}_v(\lambda) \otimes \mathcal{B}_w(\mu) = \mathcal{B}_{(i_1, \ldots, i_l), w, \lambda, \mu}$.
\end{thm}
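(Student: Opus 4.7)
The plan is to argue by induction on $l = \ell(\lfloor v \rfloor^\lambda)$, using throughout the preliminary observation that $\mathcal{B}_v(\lambda) = \mathcal{B}_{\lfloor v \rfloor^\lambda}(\lambda)$: a reduced expression for $v$ may be obtained by concatenating one for $\lfloor v \rfloor^\lambda$ with one for a representative in $W_\lambda$, and the rightmost factors get washed out because $\tilde{f}_j(b_\lambda) = 0$ whenever $s_j \in W_\lambda$. The base case $l = 0$ is then immediate from the definition. For the inductive step, set $i := i_1$ and $v' := s_{i_2} \cdots s_{i_l}$; since any reduced expression for an element of a parabolic subgroup uses only the generators of that parabolic, each $s_{i_k}$ lies in $W_{\lceil w \rceil^\mu}$ and so does $v'$, while a short length comparison gives $v' = \lfloor v' \rfloor^\lambda$. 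Applying the inductive hypothesis to $v'$ and expanding the definition of $\mathcal{B}_{(i_1, \ldots, i_l), w, \lambda, \mu}$ by one layer of $\tilde{f}_i^a$, the theorem reduces to proving
\[
\mathcal{B}_{s_i v'}(\lambda) \otimes \mathcal{B}_w(\mu) \;=\; \bigcup_{a \geq 0} \tilde{f}_i^a \bigl( \mathcal{B}_{v'}(\lambda) \otimes \mathcal{B}_w(\mu) \bigr) \setminus \{ 0 \},
\]
in which $\ell(s_i v') > \ell(v')$ and $s_i \in W_{\lceil w \rceil^\mu}$.

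The decisive observation is the identification $\mathcal{B}_w(\mu) = \mathcal{B}_{\lceil w \rceil^\mu}(\mu)$, obtained by the same washing-out argument applied to the reduced expression $\lceil w \rceil^\mu = \lfloor w \rfloor^\mu \cdot w_0^{W_\mu}$. Combined with Lemma~\ref{f(Dem)}(2), applied with $\lceil w \rceil^\mu$ in place of $w$ and using $s_i \in W_{\lceil w \rceil^\mu}$, this yields the stability $\tilde{f}_i(\mathcal{B}_w(\mu)) \subset \mathcal{B}_w(\mu) \sqcup \{ 0 \}$. With this in hand the inclusion $\supset$ of the displayed identity becomes a direct tensor product rule computation: any iterated application $\tilde{f}_i^a(b_1 \otimes b_2)$ splits as $\tilde{f}_i^{a_1}(b_1) \otimes \tilde{f}_i^{a_2}(b_2)$ for some $a_1 + a_2 = a$, and Lemma~\ref{f(Dem)}(1) together with the stability just shown keeps this element in $\mathcal{B}_{s_i v'}(\lambda) \otimes \mathcal{B}_w(\mu) \sqcup \{ 0 \}$.

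The main obstacle is the opposite inclusion $\subset$, which I would establish by raising to the top of the $i$-string. Given $x \in \mathcal{B}_{s_i v'}(\lambda) \otimes \mathcal{B}_w(\mu)$, set $\tilde{x} := \tilde{e}_i^{\varepsilon_i(x)}(x)$; because Demazure crystals are stable under the raising Kashiwara operators, $\tilde{x}$ still lies in $\mathcal{B}_{s_i v'}(\lambda) \otimes \mathcal{B}_w(\mu)$. Writing $\tilde{x} = x_1 \otimes x_2$, the tensor product rule combined with $\tilde{e}_i(\tilde{x}) = 0$ forces $\tilde{e}_i(x_1) = 0$, so $x_1$ is the top of its $i$-string in $\mathcal{B}(\lambda)$; that string meets $\mathcal{B}_{v'}(\lambda)$ because $x_1 \in \mathcal{B}_{s_i v'}(\lambda) = \bigcup_{a \geq 0} \tilde{f}_i^a(\mathcal{B}_{v'}(\lambda)) \setminus \{ 0 \}$ by Lemma~\ref{f(Dem)}(1). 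The string property (Lemma~\ref{string}) then leaves only the possibilities that the intersection with $\mathcal{B}_{v'}(\lambda)$ is the singleton at the top or the entire string, and in both cases $x_1 \in \mathcal{B}_{v'}(\lambda)$. Consequently $\tilde{x} \in \mathcal{B}_{v'}(\lambda) \otimes \mathcal{B}_w(\mu)$ and $x = \tilde{f}_i^{\varepsilon_i(x)}(\tilde{x})$ lies in the right-hand side, completing the induction.
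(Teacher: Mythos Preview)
Your proof is correct, but structured differently from the paper's. The paper proves both inclusions in one stroke, without induction: for $\mathcal{B}_v(\lambda) \otimes \mathcal{B}_w(\mu) \subset \mathcal{B}_{(i_1,\ldots,i_l),w,\lambda,\mu}$ it takes an arbitrary $b \otimes b'$, uses the \emph{string parametrization} $b = \tilde{f}_{i_1}^{a_1}\cdots\tilde{f}_{i_l}^{a_l}(b_\lambda)$, and applies $\tilde{e}_{i_l}^{\max}\cdots\tilde{e}_{i_1}^{\max}$ to land in $\mathcal{B}_e(\lambda)\otimes\mathcal{B}_w(\mu)$ (this inclusion holds for \emph{all} $v,w$, with no hypothesis on $\lfloor v\rfloor^\lambda$); for the reverse inclusion it uses the same $\tilde{f}_i$-stability of $\mathcal{B}_w(\mu)$ that you do. Your inductive organization trades the string parametrization for the string property (Lemma~\ref{string}) plus the auxiliary verification that suffixes of a reduced word for $\lfloor v\rfloor^\lambda$ remain minimal coset representatives. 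Both routes rest on the same two core facts---$\tilde{e}_i$-stability of Demazure crystals and $\tilde{f}_i$-stability of $\mathcal{B}_{\lceil w\rceil^\mu}(\mu)$ for $i\in D_L(\lceil w\rceil^\mu)$---so the difference is organizational rather than conceptual. One small remark: in your $\subset$ step, once you know $x_1 = \tilde{f}_i^a(y)$ with $y\in\mathcal{B}_{v'}(\lambda)$ and $\tilde{e}_i(x_1)=0$, you get $a=0$ immediately (else $\tilde{e}_i(x_1) = \tilde{f}_i^{a-1}(y)\neq 0$), so invoking the string property is not actually needed there.
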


By Remark \ref{rem_for_genDemcrys}, we can conclude that $\mathcal{B}_v(\lambda) \otimes \mathcal{B}_w(\mu)$ decomposes into a disjoint union of Demazure crystals if $\lfloor v \rfloor^\lambda \in W_{\lceil w \rceil^\mu}$. More precisely, we can give an explicit formula of such a decomposition in this case. To describe it, we define $u(\pi, v) \in W$ for $\lambda, \mu \in P^+$, $v, w \in W$ with $\lfloor v \rfloor^\lambda \in W_{\lceil w \rceil^\mu}$, and $\pi \in \mathcal{B}_w(\mu)^\lambda$. First, we fix a reduced expression $\lfloor v \rfloor^\lambda = s_{i_1} \cdots s_{i_l}$ for $v$. Then, we set
\begin{align*}
u_l := \begin{cases} s_{i_l} w(\pi) & \text{ if } \ell(s_{i_l} w(\pi)) > \ell(w(\pi)), \\ w(\pi) & \text{ if } \ell(s_{i_l} w(\pi)) < \ell(w(\pi)). \end{cases}
\end{align*} 
When $u_j \in W$ is defined for some $j \in \{2, \ldots, l\}$, then we define $u_{j-1} \in W$ by
\begin{align*}
u_{j-1} := \begin{cases} s_{i_{j-1}} u_j  & \text{ if } \ell(s_{i_{j-1}} u_j) > \ell(u_j), \\ u_j & \text{ if } \ell(s_{i_{j-1}} u_j) < \ell(u_j). \end{cases}
\end{align*}
Finally, we define $u(\pi, v) := u_1$. Note that $u(\pi, v)$ depends on the choice of a reduced word for $\lfloor v \rfloor^\lambda$.

We can write an explicit decomposition formula in terms of $u(\pi, v)$.

\begin{cor}\label{maincor}
Let $\lambda$ and $\mu$ be dominant integral weights, and $v, w$ elements of $W$. If $\lfloor v \rfloor^\lambda \in W_{\lceil w \rceil^\mu}$, then we have $C(\pi, v) \simeq \mathcal{B}_{u(\pi, v)}(\lambda + \wt(\pi))$ for all $\pi \in \mathcal{B}_w(\mu)^\lambda$. Hence it follows that
\begin{align*}
\mathcal{B}_v(\lambda) \otimes \mathcal{B}_w(\mu) \simeq \bigsqcup_{\pi \in \mathcal{B}_w(\mu)^\lambda} \mathcal{B}_{u(\pi, v)}(\lambda + \wt(\pi)).
\end{align*}
\end{cor}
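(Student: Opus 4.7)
The plan is to combine Theorem \ref{mainthm_1}, Theorem \ref{1tensDem}, and an iterated application of Lemma \ref{f(Dem)}. Fix a reduced expression $\lfloor v \rfloor^\lambda = s_{i_1} \cdots s_{i_l}$. By Theorem \ref{mainthm_1}, the hypothesis $\lfloor v \rfloor^\lambda \in W_{\lceil w \rceil^\mu}$ gives
\[
\mathcal{B}_v(\lambda) \otimes \mathcal{B}_w(\mu) = \bigcup_{a_1, \ldots, a_l \geq 0} \tilde{f}_{i_1}^{a_1} \cdots \tilde{f}_{i_l}^{a_l}\bigl(\mathcal{B}_e(\lambda) \otimes \mathcal{B}_w(\mu)\bigr) \setminus \{0\},
\]
while Theorem \ref{1tensDem} supplies the finer decomposition
\[
\mathcal{B}_e(\lambda) \otimes \mathcal{B}_w(\mu) = \bigsqcup_{\pi \in \mathcal{B}_w(\mu)^\lambda} C(\pi, e), \qquad C(\pi, e) \simeq \mathcal{B}_{w(\pi)}(\lambda + \wt(\pi)).
\]

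Fix $\pi \in \mathcal{B}_w(\mu)^\lambda$ and let $C(\pi, \lambda, \mu) \simeq \mathcal{B}(\lambda + \wt(\pi))$ be the ambient connected component of $\mathcal{B}(\lambda) \otimes \mathcal{B}(\mu)$ containing $\pi^\lambda \otimes \pi$; under this isomorphism $C(\pi, e)$ corresponds to the Demazure subcrystal $\mathcal{B}_{w(\pi)}(\lambda + \wt(\pi))$. The plan is to prove by downward induction on $k \in \{1, \ldots, l+1\}$ that
\[
\bigcup_{a_k, \ldots, a_l \geq 0} \tilde{f}_{i_k}^{a_k} \cdots \tilde{f}_{i_l}^{a_l}(C(\pi, e)) \setminus \{0\} = \mathcal{B}_{u_k}(\lambda + \wt(\pi)),
\]
with the conventions $u_{l+1} := w(\pi)$ and $u_k$ obtained from $u_{k+1}$ by the same rule as in the definition of $u(\pi, v)$. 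The base case $k = l+1$ is immediate. For the inductive step, one invokes Lemma \ref{f(Dem)} directly: whether $\ell(s_{i_k} u_{k+1}) > \ell(u_{k+1})$ or $\ell(s_{i_k} u_{k+1}) < \ell(u_{k+1})$, part (1) or (2) of that lemma identifies $\bigcup_{a \geq 0} \tilde{f}_{i_k}^{a}(\mathcal{B}_{u_{k+1}}(\lambda + \wt(\pi)))$ with exactly $\mathcal{B}_{u_k}(\lambda + \wt(\pi))$. Setting $k=1$ yields
\[
\bigcup_{a_1, \ldots, a_l \geq 0} \tilde{f}_{i_1}^{a_1} \cdots \tilde{f}_{i_l}^{a_l}(C(\pi, e)) \setminus \{0\} = \mathcal{B}_{u(\pi, v)}(\lambda + \wt(\pi)).
\]

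To conclude, observe that Kashiwara operators preserve connected components, hence the set above lies inside $C(\pi, v)$ (since it contains $\pi^\lambda \otimes \pi$). Taking the union over $\pi$ and combining with the first displayed equation gives
\[
\bigsqcup_{\pi \in \mathcal{B}_w(\mu)^\lambda} \mathcal{B}_{u(\pi, v)}(\lambda + \wt(\pi)) = \mathcal{B}_v(\lambda) \otimes \mathcal{B}_w(\mu) = \bigsqcup_{\pi \in \mathcal{B}_w(\mu)^\lambda} C(\pi, v),
\]
where the disjointness on the left follows from that of the ambient components $C(\pi, \lambda, \mu)$, and the components $C(\pi, v)$ are pairwise disjoint by definition. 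Since $\mathcal{B}_{u(\pi, v)}(\lambda + \wt(\pi)) \subset C(\pi, v)$ for each $\pi$, the inclusions must be equalities, yielding both the isomorphism $C(\pi, v) \simeq \mathcal{B}_{u(\pi, v)}(\lambda + \wt(\pi))$ and the stated decomposition.

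The main obstacle I foresee is the inductive step: one must verify cleanly that Lemma \ref{f(Dem)}, which is stated for Demazure subcrystals of $\mathcal{B}(\lambda)$, can be applied inside the ambient crystal $C(\pi, \lambda, \mu) \simeq \mathcal{B}(\lambda + \wt(\pi))$ at each stage, and that the dichotomy in that lemma matches the recursive definition of $u_k$ on the nose. Once this bookkeeping is in place, the rest of the argument is a matter of packaging Theorem \ref{mainthm_1} with the known Demazure description of $C(\pi, e)$ from Theorem \ref{1tensDem}.
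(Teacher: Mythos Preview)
Your proposal is correct and follows essentially the same approach as the paper's proof: both combine Theorem \ref{mainthm_1}, Theorem \ref{1tensDem}, and an iterated application of Lemma \ref{f(Dem)}, then use the containment $\bigcup_{a_1,\ldots,a_l \geq 0} \tilde{f}_{i_1}^{a_1}\cdots\tilde{f}_{i_l}^{a_l}(C(\pi,e))\setminus\{0\}\subset C(\pi,v)$ together with disjointness to force equality. The only difference is the order of exposition (you run the induction identifying $\mathcal{B}_{u_k}(\lambda+\wt(\pi))$ before the containment argument, whereas the paper does it afterward), and your worry about transporting Lemma \ref{f(Dem)} into $C(\pi,\lambda,\mu)$ is harmless since the crystal isomorphism $C(\pi,\lambda,\mu)\simeq\mathcal{B}(\lambda+\wt(\pi))$ intertwines all Kashiwara operators.
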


Note that if $w$ is the longest element $w_{\circ}$ of $W$, then we have $\mathcal{B}_{w_\circ}(\mu) = \mathcal{B}(\mu)$ and $W_{\lceil w_\circ \rceil^\mu} = W$. Hence, for all $\lambda, \mu \in P^+$ and $v \in W$, $\mathcal{B}_v(\lambda) \otimes \mathcal{B}(\mu)$ is a disjoint union of Demazure crystals.

Next we consider a necessary condition. In fact, the following assertion holds.

\begin{thm}\label{mainthm_4}
Let $\lambda, \mu \in P^+$ and $v, w \in W$. If $\lfloor v \rfloor^\lambda \not\in W_{\lceil w \rceil^\mu}$, then there exists a connected component of $\mathcal{B}_v(\lambda) \otimes \mathcal{B}_w(\mu)$ that is not isomorphic to any Demazure crystal.
\end{thm}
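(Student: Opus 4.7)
I will argue by contrapositive, producing a connected component whose intersection with an $i$-string of the ambient crystal violates the string property of Lemma~\ref{string}, so it cannot be isomorphic to any Demazure crystal.

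First I reduce to a convenient canonical form. Since a Demazure crystal depends only on the coset, we have $\mathcal{B}_v(\lambda) = \mathcal{B}_{\lfloor v\rfloor^\lambda}(\lambda)$ and $\mathcal{B}_w(\mu) = \mathcal{B}_{\lceil w\rceil^\mu}(\mu)$, so I may assume $v = \lfloor v\rfloor^\lambda$ and $w = \lceil w\rceil^\mu$; the hypothesis then becomes $v \notin W_w = W_{D_L(w)}$. Fix a reduced expression $v = s_{j_1}\cdots s_{j_l}$ and let $k$ be the smallest index for which the prefix $v_k := s_{j_1}\cdots s_{j_k}$ fails to lie in $W_w$; then $v_{k-1} \in W_w$ and $i := j_k \notin D_L(w)$, i.e.\ $\ell(s_i w) > \ell(w)$. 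By Lemma~\ref{f(Dem)}(1) this means $\mathcal{B}_w(\mu)$ is not $\tilde{f}_i$-closed, so $\mathcal{B}_{s_iw}(\mu) \setminus \mathcal{B}_w(\mu) \neq \emptyset$.

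Next I apply the forward direction together with the recursion. By Theorem~\ref{mainthm_1} together with Corollary~\ref{maincor}, for every $\pi \in \mathcal{B}_w(\mu)^\lambda$ the component $C(\pi,v_{k-1})$ is isomorphic to a Demazure crystal. I then apply Theorem~\ref{mainthm2} to $C(\pi,v_k) = C(\pi,s_i v_{k-1})$: the exceptional set that is subtracted from $\bigcup_{a\geq 0}\tilde{f}_i^a(C(\pi,v_{k-1}))\setminus\{0\}$ is non-empty precisely when we can find $\pi_1 \otimes \pi_2 \in C(\pi,v_{k-1})$ satisfying $\tilde{e}_i(\pi_1) = 0$, $\tilde{f}_i(\pi_2) \notin \mathcal{B}_w(\mu)\sqcup\{0\}$ and $\langle\wt(\pi_2),\alpha_i^\vee\rangle \geq 1$. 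The idea is to produce such a witness explicitly; a natural first attempt is to take $\pi \in \mathcal{B}_w(\mu)^\lambda$ for which $\pi_2$ lies on the $\tilde{f}_i$-boundary of $\mathcal{B}_w(\mu)$ and $\pi_1$ is the top of its $i$-string. Existence of such a boundary $\pi_2$ is guaranteed by $\ell(s_iw) > \ell(w)$ via Lemma~\ref{f(Dem)}(1), and existence of the correct $\pi_1$ is arranged using the description of $C(\pi,v_{k-1})$ from Corollary~\ref{maincor}.

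Writing $\pi_1^{\mathrm{low}} := \tilde{f}_i^{\langle\wt(\pi_1),\alpha_i^\vee\rangle}(\pi_1)$, the tensor-product rule for $\tilde{f}_i$ gives $\pi_1^{\mathrm{low}}\otimes \pi_2 \in \bigcup_{a\geq 0}\tilde{f}_i^a(C(\pi,v_{k-1}))$, and this element is \emph{not} in the exceptional set (it corresponds to $b=0$), so it lies in $C(\pi,v_k) \subseteq C(\pi,v)$, the latter inclusion coming from $v_k \leq v$ in the Bruhat order together with $\mathcal{B}_{v_k}(\lambda)\subseteq \mathcal{B}_v(\lambda)$. On the other hand, in the ambient connected component $C(\pi,\lambda,\mu) \simeq \mathcal{B}(\lambda+\wt(\pi))$, the next element of the $i$-string below $\pi_1^{\mathrm{low}}\otimes \pi_2$ is $\pi_1^{\mathrm{low}} \otimes \tilde{f}_i(\pi_2)$, which is not in $\mathcal{B}_v(\lambda)\otimes\mathcal{B}_w(\mu)$ at all because $\tilde{f}_i(\pi_2) \notin \mathcal{B}_w(\mu)$. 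Choosing the witness so that $\pi_1^{\mathrm{low}} \otimes \pi_2$ is not the top of its $i$-string in the ambient crystal (achievable by ensuring $\langle\wt(\pi_1),\alpha_i^\vee\rangle \geq 1$ or $\varepsilon_i(\pi_2) \geq 1$), the intersection of this $i$-string with $C(\pi,v)$ is neither empty, nor a single top element, nor the full string. By Lemma~\ref{string}, this forbids $C(\pi,v)$ from being isomorphic to any Demazure crystal, as desired.

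The principal obstacle is Step~3, namely the explicit construction and verification of the witness pair $(\pi_1,\pi_2) \in C(\pi,v_{k-1})$. One must simultaneously guarantee the three conditions from the exceptional set of Theorem~\ref{mainthm2} and the additional condition that $\pi_1^{\mathrm{low}}\otimes\pi_2$ not be the top of its ambient $i$-string. This requires a careful LS-path analysis: on the $\mathcal{B}_w(\mu)$-side, one uses that $i = j_k \notin D_L(\lceil w\rceil^\mu)$ to find $\pi_2 \in \mathcal{B}_w(\mu)$ with $\tilde{f}_i(\pi_2) \in \mathcal{B}_{s_iw}(\mu)\setminus\mathcal{B}_w(\mu)$; on the $\mathcal{B}_{v_{k-1}}(\lambda)$-side, one uses the explicit parametrization of $C(\pi,v_{k-1})$ afforded by Theorem~\ref{mainthm_1} and the Magyar element $w(\pi)$. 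Handling the corner case when every candidate $\pi_1^{\mathrm{low}}\otimes \pi_2$ happens to be its own $i$-string top in the ambient crystal (so no violation is visible) may require switching the choice of $\pi$, which is where the minimality of $k$ in Step~2 and the assumption $v = \lfloor v\rfloor^\lambda$ (guaranteeing $\langle\lambda,\alpha_{j_l}^\vee\rangle \geq 1$) become essential.
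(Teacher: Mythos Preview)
Your high-level strategy---exhibit an $i$-string whose intersection with a connected component violates Lemma~\ref{string}---is exactly the paper's. But what you have written is a plan, not a proof: you leave the construction of the witness $(\pi_1,\pi_2)$ explicitly undone and call it ``the principal obstacle,'' and that construction is essentially the whole argument. Your detour through Theorem~\ref{mainthm2} and the component $C(\pi,v_{k-1})$ only adds constraints: you now need the witness to live in $C(\pi,v_{k-1})$ for your \emph{smallest} $k$, which is strictly harder than simply placing it in $\mathcal{B}_v(\lambda)\otimes\mathcal{B}_w(\mu)$, and it is not clear the natural candidate satisfies that extra condition.

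The paper bypasses all of this with one explicit element and no recursion. After reducing to $v=\lfloor v\rfloor^\lambda$, $w=\lceil w\rceil^\mu$ and fixing a reduced expression $v=s_{i_1}\cdots s_{i_l}$, pick \emph{any} index $k$ with $i_k\notin D_L(w)$ and take
\[
\pi_1:=\pi^{\,s_{i_{k+1}}\cdots s_{i_l}\lambda},\qquad \pi_2:=\pi^{w\mu}.
\]
Since $w\mu$ is the lowest weight of $\mathcal{B}_w(\mu)$ and $\langle w\mu,\alpha_{i_k}^\vee\rangle>0$ by Lemma~\ref{key}, one has $\tilde{f}_{i_k}(\pi_2)\notin\mathcal{B}_w(\mu)\sqcup\{0\}$ and $\tilde{e}_{i_k}(\pi_2)=0$. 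The ``corner case'' you flag is dispatched by showing $\langle s_{i_{k+1}}\cdots s_{i_l}\lambda,\alpha_{i_k}^\vee\rangle>0$: a nonpositive value would force either a length drop (contradicting reducedness of $s_{i_1}\cdots s_{i_l}$) or $s_{i_1}\cdots\widehat{s_{i_k}}\cdots s_{i_l}\in vW_\lambda$ (contradicting minimality of $\lfloor v\rfloor^\lambda$). This inequality gives $\tilde{e}_{i_k}(\pi_1)=0$ with $\varphi_{i_k}(\pi_1)\geq 1$, and the same reasoning applied to each prefix shows the initial direction of $\tilde{f}_{i_k}(\pi_1)$ is $\leq v\lambda$, so $\tilde{f}_{i_k}(\pi_1)\in\mathcal{B}_v(\lambda)$. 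Hence $\tilde{f}_{i_k}(\pi_1\otimes\pi_2)=\tilde{f}_{i_k}(\pi_1)\otimes\pi_2$ lies in the component $C$ containing $\pi_1\otimes\pi_2$, while $\tilde{f}_{i_k}^{\,\varphi_{i_k}(\pi_1)+1}(\pi_1\otimes\pi_2)\neq 0$ has second factor $\tilde{f}_{i_k}(\pi_2)\notin\mathcal{B}_w(\mu)$ and so is not in $C$. This is the string-property violation, obtained directly.
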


Therefore, by combining Corollary \ref{maincor} and Theorem \ref{mainthm_4}, we obtain the following theorem, which is one of the main results of this paper.

\begin{thm}\label{maininthispaper}
Let $\lambda$ and $\mu$ be dominant integral weights, and $v, w$ elements of $W$. Each connected component of $\mathcal{B}_v(\lambda) \otimes \mathcal{B}_w(\mu)$ is isomorphic to some Demazure crystal if and only if $\lfloor v \rfloor^\lambda \in W_{\lceil w \rceil^\mu}$.
\end{thm}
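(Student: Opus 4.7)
The plan is to prove Theorem \ref{maininthispaper} as an immediate synthesis of the two statements already set up in the excerpt: Corollary \ref{maincor} gives the forward implication and Theorem \ref{mainthm_4} the reverse. Explicitly, assume first that $\lfloor v \rfloor^\lambda \in W_{\lceil w \rceil^\mu}$; then Corollary \ref{maincor} yields
\[ \mathcal{B}_v(\lambda) \otimes \mathcal{B}_w(\mu) \simeq \bigsqcup_{\pi \in \mathcal{B}_w(\mu)^\lambda} \mathcal{B}_{u(\pi, v)}(\lambda + \wt(\pi)), \]
so every connected component is isomorphic to a Demazure crystal. Conversely, if $\lfloor v \rfloor^\lambda \notin W_{\lceil w \rceil^\mu}$, Theorem \ref{mainthm_4} directly exhibits a connected component that is not isomorphic to any Demazure crystal, ruling out Demazure-decomposability.

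The real content lies in the two ingredients. For Corollary \ref{maincor} (and the underlying Theorem \ref{mainthm_1}), the plan is to fix a reduced expression $\lfloor v \rfloor^\lambda = s_{i_1} \cdots s_{i_l}$ and show by induction on $l$ that applying operators of the form $\tilde{f}_{i_1}^{a_1} \cdots \tilde{f}_{i_l}^{a_l}$ to $\mathcal{B}_e(\lambda) \otimes \mathcal{B}_w(\mu)$ produces exactly $\mathcal{B}_v(\lambda) \otimes \mathcal{B}_w(\mu)$. The $\supset$ inclusion is controlled by the definition of the Demazure crystal $\mathcal{B}_v(\lambda)$ together with the tensor-product rule; the $\subset$ direction hinges on the hypothesis $\lfloor v \rfloor^\lambda \in W_{\lceil w \rceil^\mu}$, which prevents the lowering operators from escaping $\mathcal{B}_w(\mu)$ on the right tensor factor (compare Lemma \ref{f(Dem)}(2)). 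Once this identification is made, Remark \ref{rem_for_genDemcrys}(2) together with Theorem \ref{1tensDem} pins each connected component down as a Demazure crystal whose parameter is computed by a left-to-right recursion, and a direct check shows that this recursion matches the definition of $u(\pi, v)$.

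For Theorem \ref{mainthm_4}, the plan is to turn the failure $\lfloor v \rfloor^\lambda \notin W_{\lceil w \rceil^\mu}$ into an explicit obstruction via the string property of Lemma \ref{string}. The idea is to locate some $i \in I$, a $\lambda$-dominant path $\pi \in \mathcal{B}_w(\mu)^\lambda$, and an element $b \otimes b' \in C(\pi, v)$ whose entire $i$-string in $\mathcal{B}(\lambda) \otimes \mathcal{B}(\mu)$ meets $C(\pi, v)$ in a subset that is neither empty, nor a singleton, nor the full string; by Lemma \ref{string} no Demazure crystal in $\mathcal{B}(\lambda + \wt(\pi))$ can then host $C(\pi, v)$. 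Producing such a witness is the combinatorial heart of the argument and requires an exchange-condition analysis inside $W$ to exploit exactly how $\lfloor v \rfloor^\lambda$ fails to lie in the parabolic subgroup $W_{\lceil w \rceil^\mu}$.

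The main obstacle is not the present theorem, whose proof is a two-sentence citation to the preceding results, but rather the construction behind Theorem \ref{mainthm_4}: one must produce a path-and-string pair that concretely violates the Demazure string property, and the delicacy is in ensuring that the obstruction lives inside a single connected component rather than crossing components, which requires simultaneously controlling the initial direction $\iota(\pi)$ of the chosen path and the action of the tensor-product Kashiwara operator $\tilde{f}_i$ on two-fold tensors.
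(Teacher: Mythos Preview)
Your proposal is correct and matches the paper's approach exactly: the paper states Theorem \ref{maininthispaper} immediately after Corollary \ref{maincor} and Theorem \ref{mainthm_4} with the sentence ``Therefore, by combining Corollary \ref{maincor} and Theorem \ref{mainthm_4}, we obtain the following theorem,'' and gives no further argument. Your sketches of the proofs of the two ingredients also accurately reflect the paper's arguments (Lemmas \ref{mainthm_2}--\ref{mainthm_3} for the equality in Theorem \ref{mainthm_1}, and the explicit string-property violation built from $\pi^{s_{i_{k+1}} \cdots s_{i_l}\lambda} \otimes \pi^{w\mu}$ for Theorem \ref{mainthm_4}).
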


Before proving Theorem \ref{mainthm_1}, Corollary \ref{maincor}, and Theorem \ref{mainthm_4}, we give some examples of tensor products of Demazure crystals.

\subsection{Examples of tensor products of Demazure crystals}
Here we give some examples. In this subsection, we consider the case $\mathfrak{g} = \mathfrak{sl}_3$. Let $I := \{1, 2\}$ and $\mathfrak{h} := \{ h \in \mathfrak{g} \ | \ h \text{ is a diagonal matrix} \}$. For $k \in \{ 1, 2, 3\}$, we define the maps $\varepsilon_k : \mathfrak{h} \rightarrow \mathbb{C}$ by
\begin{align*}
\varepsilon_k \left( \begin{pmatrix} z_1 & 0 & 0 \\ 0 & z_2 & 0 \\ 0 & 0 & z_3 \end{pmatrix} \right) := z_k.
\end{align*}

Let $\alpha_i := \varepsilon_i - \varepsilon_{i+1}$ for $i \in I$, and define $\varpi_1$ and $\varpi_2 \in \mathfrak{h}^\ast$ by $\varpi_1 := \varepsilon_1$ and $\varpi_2 := \varepsilon_1 + \varepsilon_2$, respectively. Then $\{ \alpha_1, \alpha_2 \}$ is the set of simple roots, and $\{\varpi_1, \varpi_2\}$ is the set of fundamental weights. In this case, for $i \in I$ the simple reflection $s_i$ is the linear transformation on $\mathfrak{h}^\ast$, which satisfy the following equality for $k \in \{1, 2, 3\}$.
\begin{align*}
s_i (\varepsilon_k) = \begin{cases} \varepsilon_{i+1} & (k = i), \\ \varepsilon_i & (k = i+1), \\ \varepsilon_k & (k \not= i, i+1). \end{cases}
\end{align*}

\begin{exm}
Let $v = s_1 s_2$, $w = s_1 s_2 s_1$, $\lambda = \varpi_1 + \varpi_2$, and $\mu = \varpi_1$. Then we can verify the following equation by direct calculation.
\begin{align}\label{exm1}
\mathcal{B}_v(\lambda) \otimes \mathcal{B}_w(\mu) \simeq \mathcal{B}_{s_1 s_2}( 2\varpi_1 + \varpi_2 ) \sqcup \mathcal{B}_{s_1 s_2} ( 2\varpi_2 ) \sqcup \mathcal{B}_{s_1} ( \varpi_1 ).
\end{align}
In this case, $\lfloor v \rfloor^\lambda = v$ is contained in $W_{\lceil w \rceil^\mu} = W_w$ since $w$ is the longest element of $W$.

We compute $u(\pi, v)$ for $\pi \in \mathcal{B}_w(\mu)^\lambda$ and compare (\ref{exm1}) with the isomorphism in Corollary \ref{maincor}. In our case, we have $\mathcal{B}_w(\mu) = \{ \pi^{\varepsilon_1}, \pi^{\varepsilon_2}, \pi^{\varepsilon_3} \}$.

First, we consider $\pi^{\varepsilon_1}$. For $t \in [0, 1]$, we have $\lambda + \pi^{\varepsilon_1}(t) = (t + 2)\varepsilon_1 + \varepsilon_2$, and hence $\langle \lambda + \pi^{\varepsilon_1}(t), \alpha_1^\vee \rangle = t + 1 > 0$, $\langle \lambda + \pi^{\varepsilon_1}(t), \alpha_2^\vee \rangle = 1 > 0$. It follows that $\pi^{\varepsilon_1} \in \mathcal{B}_w(\mu)^\lambda$ and $W(t) = \{ e \}$. Therefore, $w(\pi^{\varepsilon_1}) = e$ and $u(\pi^{\varepsilon_1}, v) = s_1 s_2$. Note that $\lambda + \wt(\pi^{\varepsilon_1}) = 3 \varepsilon_1 + \varepsilon_2 = 2\varpi_1 + \varpi_2$.

Next, we consider $\pi^{\varepsilon_2}$. Fix $t \in [0, 1]$. Then we have $\lambda + \pi^{\varepsilon_2}(t) = 2 \varepsilon_1 + (t + 1)\varepsilon_2$, and hence $\langle \lambda + \pi^{\varepsilon_2}(t), \alpha_1^\vee \rangle = -t + 1 \geq 0$, $\langle \lambda + \pi^{\varepsilon_2}(t), \alpha_2^\vee \rangle = t + 1 >0$. It follows that $\pi^{\varepsilon_2} \in \mathcal{B}_w(\mu)^\lambda$. Since $\langle \lambda + \pi^{\varepsilon_2}(t), \alpha_1^\vee \rangle = 0$ if and only if $t = 1$, we have
\begin{align*}
W(t) = \begin{cases} \{ e \} & (0 \leq t < 1), \\ \{ e, s_1 \} & (t = 1). \end{cases}
\end{align*}
Therefore, we conclude that $w(\pi^{\varepsilon_2}) = s_1$ and $u(\pi^{\varepsilon_2}, v) = s_1 s_2 s_1$. Note that $\lambda + \wt(\pi^{\varepsilon_2}) = 2\varepsilon_1 + 2\varepsilon_2 = 2\varpi_2$.

Finally, we consider $\pi^{\varepsilon_3}$. Take $t \in [0, 1]$. Then one has $\lambda + \pi^{\varepsilon_3}(t) = 2\varepsilon_1 + \varepsilon_2 + t\varepsilon_3$, and hence $\langle \lambda + \pi^{\varepsilon_3}(t), \alpha_1^\vee \rangle = 1 >0$, $\langle \lambda + \pi^{\varepsilon_3}(t), \alpha_2^\vee \rangle = -t + 1 \geq 0$. It follows that $\pi^{\varepsilon_3} \in \mathcal{B}_w(\mu)^\lambda$. Since $\langle \lambda + \pi^{\varepsilon_3}(t), \alpha_2^\vee \rangle = 0$ if and only if $t = 1$, we have
\begin{align*}
W(t) = \begin{cases} \{ e \} & (0 \leq t < 1), \\ \{e, s_2 \} & (t = 1). \end{cases}
\end{align*}
Therefore, we can verify that $w(\pi^{\varepsilon_3}) = s_2$ and $u(\pi^{\varepsilon_3}, v) = s_1 s_2$. Note that $\lambda + \wt(\pi^{\varepsilon_3}) = 2\varepsilon_1 + \varepsilon_2 + \varepsilon_3 = \varepsilon_1 = \varpi_1$. Recall that $\varepsilon_1 + \varepsilon_2 + \varepsilon_3 = 0$ since we are considering $\mathfrak{g} = \mathfrak{sl}_3$.

Thus we have
\begin{align*}
& \ \bigsqcup_{\pi \in \mathcal{B}_w(\mu)^\lambda} \mathcal{B}_{u(\pi, v)}(\lambda + \wt(\pi)) \\
=& \ \mathcal{B}_{u(\pi^{\varepsilon_1}, v)}(\lambda + \wt(\pi^{\varepsilon_1})) \sqcup \mathcal{B}_{u(\pi^{\varepsilon_2}, v)}(\lambda + \wt(\pi^{\varepsilon_2})) \sqcup \mathcal{B}_{u(\pi^{\varepsilon_3}, v)}(\lambda + \wt(\pi^{\varepsilon_3})) \\
=& \ \mathcal{B}_{s_1 s_2}(2\varpi_1 + \varpi_2) \sqcup \mathcal{B}_{s_1 s_2 s_1}(2\varpi_2) \sqcup \mathcal{B}_{s_1 s_2}(\varpi_1) \\
=& \ \mathcal{B}_{s_1 s_2}(2\varpi_1 + \varpi_2) \sqcup \mathcal{B}_{s_1 s_2}(2\varpi_2) \sqcup \mathcal{B}_{s_1}(\varpi_1) \\
\simeq& \ \mathcal{B}_v(\lambda) \otimes \mathcal{B}_w(\mu).
\end{align*}

\end{exm}

\begin{exm}
Let $v = s_1$, $w = s_1 s_2$, $\lambda = 2 \varpi_1 + \varpi_2$, and $\mu = \varpi_1 + 2 \varpi_2$. Then we can verify the following equation by direct calculation.
\begin{align}\label{exm2}
& \ \mathcal{B}_v(\lambda) \otimes \mathcal{B}_w(\mu) \nonumber \\ 
\simeq & \ \mathcal{B}_{s_1}( 3\varpi_1 + 3\varpi_2 ) \sqcup \mathcal{B}_{s_1} ( \varpi_1 + 4\varpi_2 ) \sqcup \mathcal{B}_{s_1 s_2} ( 4\varpi_1 + \varpi_2 ) \nonumber \\
& \ \sqcup \mathcal{B}_{s_1}( 2\varpi_1 + 2\varpi_2 ) \sqcup \mathcal{B}_{e} (3\varpi_2 ) \sqcup \mathcal{B}_{s_1} ( 3\varpi_1)  \sqcup \mathcal{B}_{s_1} ( \varpi_1 + \varpi_2 ). 
\end{align}
In this case, $\lfloor v \rfloor^\lambda = v$ is contained in $W_{\lceil w \rceil^\mu} = W_w$ since $\ell(s_1 w) = \ell(s_2) = 1 < 2 = \ell(w)$.

We compare (\ref{exm2}) with the isomorphism in Corollary \ref{maincor}. Set
\begin{align*}
\pi_1 := & \ \pi^\mu, \\
\pi_2 := & \ \pi^{- \varpi_1 + 3 \varpi_2}, \\
\pi_3 := & \ \left( 3 \varpi_1 - 2 \varpi_2, \varpi_1 + 2 \varpi_2; 0, \frac{1}{2}, 1 \right) , \\
\pi_4 := & \ \left( -3 \varpi_1 + \varpi_2, 3 \varpi_1 - 2 \varpi_2, \varpi_1 + 2 \varpi_2; 0, \frac{1}{3}, \frac{1}{2}, 1 \right), \\
\pi_5 := & \ \left( -3 \varpi_1 + \varpi_2, - \varpi_1 + 3 \varpi_2; 0, \frac{1}{2}, 1 \right), \\
\pi_6 := & \ \left( -3 \varpi_1 + \varpi_2, 3 \varpi_1 - 2 \varpi_2; 0, \frac{1}{3}, 1 \right), \\
\pi_7 := & \ \left( -3 \varpi_1 + \varpi_2, 3 \varpi_1 - 2 \varpi_2; 0, \frac{2}{3}, 1 \right).
\end{align*}
Then we have $\mathcal{B}_w(\mu)^\lambda = \{ \pi_1, \ldots, \pi_7 \}$. We can verify that $u(\pi_1, v) = s_1$, $u(\pi_2, v) = s_1$, $u(\pi_3, v) = s_1 s_2$, $u(\pi_4, v) = s_1$, $u(\pi_5, v) = s_1$, $u(\pi_6, v) = s_1 s_2$, and $u(\pi_7, v) = s_1$. Thus we have
\begin{align*}
& \ \bigsqcup_{\pi \in \mathcal{B}_w(\mu)^\lambda} \mathcal{B}_{u(\pi, v)}(\lambda + \wt(\pi)) \\
=& \ \bigsqcup_{k \in \{ 1, \ldots, 7 \}} \mathcal{B}_{u(\pi_k, v)}(\lambda + \wt(\pi_k)) \\
=& \ \mathcal{B}_{s_1}( 3\varpi_1 + 3\varpi_2 ) \sqcup \mathcal{B}_{s_1} ( \varpi_1 + 4\varpi_2 ) \sqcup \mathcal{B}_{s_1 s_2} ( 4\varpi_1 + \varpi_2 )  \\
& \ \sqcup \mathcal{B}_{s_1}( 2\varpi_1 + 2\varpi_2 ) \sqcup \mathcal{B}_{s_1} ( 3\varpi_2 ) \sqcup \mathcal{B}_{s_1 s_2} ( 3\varpi_1 ) \sqcup \mathcal{B}_{s_1} ( \varpi_1 + \varpi_2 ) \\
= & \ \mathcal{B}_{s_1}( 3\varpi_1 + 3\varpi_2 ) \sqcup \mathcal{B}_{s_1} ( \varpi_1 + 4\varpi_2 ) \sqcup \mathcal{B}_{s_1 s_2} ( 4\varpi_1 + \varpi_2 ) \nonumber \\
& \ \sqcup \mathcal{B}_{s_1}( 2\varpi_1 + 2\varpi_2 ) \sqcup \mathcal{B}_{e} (3\varpi_2 ) \sqcup \mathcal{B}_{s_1} ( 3\varpi_1 ) \sqcup \mathcal{B}_{s_1} ( \varpi_1 + \varpi_2 ) \\
\simeq & \ \mathcal{B}_v(\lambda) \otimes \mathcal{B}_w(\mu).
\end{align*}

\end{exm}

\begin{exm}
Let $v = w = s_1 s_2$, $\lambda = \varpi_1 + \varpi_2$, and $\mu = \varpi_1$. One of the connected components of the crystal graph of $\mathcal{B}_v(\lambda) \otimes \mathcal{B}_w(\mu)$ is as follows:

\begin{center}
\begin{picture}(29.2,3)(0,0)
\put(0,1){$\pi^\lambda \ast \tilde{f}_1(\pi^\mu)$}
\put(6,1){\vector(1,0){3}}
\put(7,1.5){2}
\put(10,1){$\tilde{f}_2(\pi^\lambda) \ast \tilde{f}_1(\pi^\mu)$}
\put(17.8,1){\vector(1,0){3}}
\put(18.8,1.5){1}
\put(21.8,1){$\tilde{f}_1\tilde{f}_2(\pi^\lambda) \ast \tilde{f}_1(\pi^\mu)$.}
\end{picture}
\end{center}
The highest weight of this connected component is $2 \varpi_2$. Therefore, if this connected component were a Demazure crystal, it must be $\mathcal{B}_{s_1 s_2} ( 2 \varpi_2 )$. However clearly it is not. Hence $\mathcal{B}_v(\lambda) \otimes \mathcal{B}_w(\mu)$ is not a disjoint union of Demazure crystals. This example shows that in general $\mathcal{B}_v(\lambda) \otimes \mathcal{B}_w(\mu)$ is not  a disjoint union of Demazure crystals for $v, w \in W$ and $\lambda, \mu \in P^+$. Note that in this case, we have $\lfloor v \rfloor^\lambda = v$ and $\lceil w \rceil^\mu = w$; thus, $\lfloor v \rfloor^\lambda$ is not contained in $W_{\lceil w \rceil^\mu}$.
\end{exm}

\subsection{Proofs of Theorem \ref{mainthm_1} and Corollary \ref{maincor}}

First of all, we prove the following two lemmas.

\begin{lem}\label{mainthm_2}
Let $\lambda$ and $\mu$ be dominant integral weights, and $v$ and $w$ elements of $W$, and take a reduced expression $v = s_{i_1} \cdots s_{i_l}$. Then we have $\mathcal{B}_v(\lambda) \otimes \mathcal{B}_w(\mu) \subset \mathcal{B}_{(i_1, \ldots, i_l), w, \lambda, \mu}$.
\end{lem}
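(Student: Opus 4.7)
The plan is to proceed by induction on $l = \ell(v)$. When $l = 0$, both sides equal $\{\pi^\lambda\} \otimes \mathcal{B}_w(\mu)$, so the containment is trivial.

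For the inductive step, write $v = s_{i_1} v'$ with $v' = s_{i_2} \cdots s_{i_l}$, noting $\ell(s_{i_1} v') > \ell(v')$. Fix an arbitrary $x \otimes y \in \mathcal{B}_v(\lambda) \otimes \mathcal{B}_w(\mu)$, set $a := \varepsilon_{i_1}(x \otimes y)$, and define $x_0 \otimes y_0 := \tilde{e}_{i_1}^a(x \otimes y)$, so that $\tilde{e}_{i_1}(x_0 \otimes y_0) = 0$. I claim that (i) $x_0 \in \mathcal{B}_{v'}(\lambda)$ and (ii) $y_0 \in \mathcal{B}_w(\mu)$.

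For (i), a direct inspection of the tensor product rule for $\tilde{e}_{i_1}$ shows that $\tilde{e}_{i_1}(x_0 \otimes y_0) = 0$ forces $\tilde{e}_{i_1}(x_0) = 0$; indeed, the alternative $\varphi_{i_1}(x_0) < \varepsilon_{i_1}(y_0)$ together with $\tilde{e}_{i_1}(y_0) = 0$ is self-contradictory, since it would require $\varepsilon_{i_1}(y_0) = 0$ while $\varphi_{i_1}(x_0) \geq 0$. Thus $x_0 = \tilde{e}_{i_1}^{\max}(x)$ is the top of the $i_1$-string $S$ through $x$. By Lemma \ref{f(Dem)}(1) one has $\mathcal{B}_v(\lambda) = \bigcup_{n \geq 0} \tilde{f}_{i_1}^n(\mathcal{B}_{v'}(\lambda)) \setminus \{0\}$, so $S$ meets $\mathcal{B}_{v'}(\lambda)$; the string property (Lemma \ref{string}) then implies $S \cap \mathcal{B}_{v'}(\lambda) \in \{\{x_0\}, S\}$, and in either case $x_0 \in \mathcal{B}_{v'}(\lambda)$. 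For (ii), since the raising Kashiwara operator preserves $\mathcal{B}_w(\mu) \sqcup \{0\}$, each reduction of the second tensor factor performed by the tensor rule remains in $\mathcal{B}_w(\mu)$, hence $y_0 \in \mathcal{B}_w(\mu)$.

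Thus $x_0 \otimes y_0 \in \mathcal{B}_{v'}(\lambda) \otimes \mathcal{B}_w(\mu)$, and the induction hypothesis applied to the reduced expression $v' = s_{i_2} \cdots s_{i_l}$ yields $a_2, \ldots, a_l \geq 0$ and $z \in \mathcal{B}_e(\lambda) \otimes \mathcal{B}_w(\mu)$ with $x_0 \otimes y_0 = \tilde{f}_{i_2}^{a_2} \cdots \tilde{f}_{i_l}^{a_l}(z)$. Since $a = \varepsilon_{i_1}(x \otimes y)$ guarantees $\tilde{f}_{i_1}^a(x_0 \otimes y_0) = x \otimes y$ via the crystal axioms, applying $\tilde{f}_{i_1}^a$ yields $x \otimes y = \tilde{f}_{i_1}^a \tilde{f}_{i_2}^{a_2} \cdots \tilde{f}_{i_l}^{a_l}(z) \in \mathcal{B}_{(i_1, \ldots, i_l), w, \lambda, \mu}$, completing the induction. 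The main obstacle is step (i): one must carefully argue that raising $x \otimes y$ by $\tilde{e}_{i_1}^a$ lifts the first tensor factor all the way to the top of its $i_1$-string, and then combine Lemma \ref{f(Dem)} with the string property to place that top element inside the smaller Demazure crystal $\mathcal{B}_{v'}(\lambda)$ rather than merely in $\mathcal{B}_v(\lambda)$.
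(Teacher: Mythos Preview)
Your proof is correct and follows essentially the same strategy as the paper: raise $x\otimes y$ by $\tilde{e}_{i_1}^{\max}$ (and then, iteratively, by $\tilde{e}_{i_2}^{\max},\ldots,\tilde{e}_{i_l}^{\max}$) to land in $\mathcal{B}_e(\lambda)\otimes\mathcal{B}_w(\mu)$. The only cosmetic difference is that the paper does this in one pass using the string parametrization $b=\tilde{f}_{i_1}^{a_1}\cdots\tilde{f}_{i_l}^{a_l}(b_\lambda)$ (which directly gives $\tilde{e}_{i_1}^{\max}(b)=\tilde{f}_{i_2}^{a_2}\cdots\tilde{f}_{i_l}^{a_l}(b_\lambda)\in\mathcal{B}_{v'}(\lambda)$), whereas you package the same computation as an induction and invoke Lemma~\ref{f(Dem)} together with the string property to place $x_0$ in $\mathcal{B}_{v'}(\lambda)$.
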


\begin{lem}\label{mainthm_3}
Let $\lambda$ and $\mu$ be dominant integral weights, $w$ an element of $W$, and $v$ an element of $W_w$ and take a reduced expression $v = s_{i_1} \cdots s_{i_l}$. Then we have $\mathcal{B}_v(\lambda) \otimes \mathcal{B}_w(\mu) \supset \mathcal{B}_{(i_1, \ldots, i_l), w, \lambda, \mu}$.
\end{lem}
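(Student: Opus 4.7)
The plan is to proceed by induction on $l = \ell(v)$. When $l = 0$ one has $v = e$ and both sides coincide with $\mathcal{B}_e(\lambda) \otimes \mathcal{B}_w(\mu)$, so the base case is immediate.

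For the inductive step, I set $v' := s_{i_2} \cdots s_{i_l}$, a reduced expression of length $l-1$. The first key observation is that $v' \in W_w$: since $W_w = \langle s_i \mid i \in D_L(w)\rangle$ is a parabolic subgroup and $v \in W_w$, the standard fact that every reduced expression of an element of a parabolic subgroup only uses the simple reflections generating that subgroup forces $i_k \in D_L(w)$ for all $k \in \{1, \ldots, l\}$; in particular $v' \in W_w$. The inductive hypothesis applied to $v'$ then gives
\[
\mathcal{B}_{(i_2, \ldots, i_l), w, \lambda, \mu} \subset \mathcal{B}_{v'}(\lambda) \otimes \mathcal{B}_w(\mu).
\]

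Unpacking the definition, $\mathcal{B}_{(i_1, \ldots, i_l), w, \lambda, \mu} = \bigcup_{a \geq 0} \tilde{f}_{i_1}^a(\mathcal{B}_{(i_2, \ldots, i_l), w, \lambda, \mu}) \setminus \{0\}$, so it suffices to check that $\tilde{f}_{i_1}^a(b \otimes b') \in (\mathcal{B}_v(\lambda) \otimes \mathcal{B}_w(\mu)) \sqcup \{0\}$ for every $a \geq 0$, $b \in \mathcal{B}_{v'}(\lambda)$, and $b' \in \mathcal{B}_w(\mu)$. The tensor product signature rule for Kashiwara operators yields $\tilde{f}_{i_1}^a(b \otimes b') = \tilde{f}_{i_1}^p(b) \otimes \tilde{f}_{i_1}^q(b')$ for some nonnegative integers $p, q$ with $p + q = a$ (once an application lands on the second factor it remains there for all subsequent applications). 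Combining $\ell(s_{i_1} v') = \ell(v) > \ell(v')$ with Lemma \ref{f(Dem)}(1) gives $\tilde{f}_{i_1}^p(\mathcal{B}_{v'}(\lambda)) \subset \mathcal{B}_{s_{i_1}v'}(\lambda) \sqcup \{0\} = \mathcal{B}_v(\lambda) \sqcup \{0\}$, while $i_1 \in D_L(w)$ and Lemma \ref{f(Dem)}(2) give $\tilde{f}_{i_1}^q(\mathcal{B}_w(\mu)) \subset \mathcal{B}_w(\mu) \sqcup \{0\}$. If $\tilde{f}_{i_1}^a(b \otimes b') \neq 0$, then both factors are nonzero, and the inductive step is complete.

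The main conceptual ingredient is the parabolic subgroup fact that funnels the entire reduced word $(i_1, \ldots, i_l)$ into $D_L(w)$; once this is granted, the argument is a routine combination of the inductive hypothesis, the signature rule for tensor products, and Lemma \ref{f(Dem)}. I do not anticipate any serious obstacle, as all these ingredients are standard.
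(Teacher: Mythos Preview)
Your proof is correct and follows essentially the same approach as the paper's. Both arguments hinge on the parabolic-subgroup fact that every $i_k$ lies in $D_L(w)$, then use the tensor-product rule to split each $\tilde f_{i_k}^{a_k}$ between the two factors, invoking Lemma~\ref{f(Dem)}(2) to keep the second factor inside $\mathcal{B}_w(\mu)$; the only cosmetic difference is that you package the iteration as an induction on $l$ (peeling off $\tilde f_{i_1}$ from the outside and using Lemma~\ref{f(Dem)}(1) for the first factor), whereas the paper unrolls the same computation directly and appeals to the definition of $\mathcal{B}_v(\lambda)$ for the first factor.
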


Note that the assumption of Lemma \ref{mainthm_2} is weaker than that of Lemma \ref{mainthm_3}.

\begin{proof}[Proof of Lemma \ref{mainthm_2}]
Take $b \otimes b^\prime \in \mathcal{B}_v(\lambda) \otimes \mathcal{B}_w(\mu)$ arbitrarily. Since $b \otimes b^\prime \not = 0$, it is sufficient to show that $b \otimes b^\prime \in \bigcup_{a_1, \ldots, a_l \geq 0} \tilde{f}_{i_1}^{a_1} \cdots \tilde{f}_{i_l}^{a_l} \left( \mathcal{B}_e(\lambda) \otimes \mathcal{B}_w(\mu) \right)$. For this, we show that $\tilde{e}_{i_l}^{\text{max}} \cdots \tilde{e}_{i_1}^{\text{max}} \left( b \otimes b^\prime \right) \in \mathcal{B}_e(\lambda) \otimes \mathcal{B}_w(\mu)$.

Let $b = \tilde{f}_{i_1}^{a_1} \cdots \tilde{f}_{i_l}^{a_l} (b_\lambda)$ be the string parametrization for $b$. By the tensor product rule, there is some $r_1 \in \mathbb{Z}$ with $r_1 \geq 0$ such that
\begin{align*}
\tilde{e}_{i_1}^{\text{max}} \left( b \otimes b^\prime \right) &= \tilde{e}_{i_1}^{\text{max}} \left( \tilde{f}_{i_1}^{a_1} \cdots \tilde{f}_{i_l}^{a_l} (b_\lambda) \otimes b^\prime \right) \\
&= \tilde{f}_{i_2}^{a_2} \cdots \tilde{f}_{i_l}^{a_l} (b_\lambda) \otimes \tilde{e}_{i_1}^{r_1} (b^\prime)
\end{align*}
since $\tilde{e}_{i_1} \tilde{f}_{i_2}^{a_2} \cdots \tilde{f}_{i_l}^{a_l} (b_\lambda) = 0$. Similarly, there is a nonnegative integer $r_2$ such that
\begin{align*}
\tilde{e}_{i_2}^{\text{max}}\tilde{e}_{i_1}^{\text{max}} \left( b \otimes b^\prime \right) &= \tilde{f}_{i_3}^{a_3} \cdots \tilde{f}_{i_l}^{a_l} (b_\lambda) \otimes \tilde{e}_{i_2}^{r_2} \tilde{e}_{i_1}^{r_1} (b^\prime).
\end{align*}
Repeating this, we obtain nonnegative integers $r_1, \ldots, r_l$ such that
\begin{align*}
\tilde{e}_{i_l}^{\text{max}} \cdots \tilde{e}_{i_1}^{\text{max}} \left( b \otimes b^\prime \right) &= b_\lambda \otimes \tilde{e}_{i_l}^{r_l} \cdots \tilde{e}_{i_1}^{r_1} (b^\prime).
\end{align*}
Since a Demazure crystal is stable under raising Kashiwara operators, we have $\tilde{e}_{i_l}^{r_l} \cdots \tilde{e}_{i_1}^{r_1} (b^\prime) \in \mathcal{B}_w(\mu) \sqcup \{ 0 \}$. Since $\tilde{e}_{i_l}^{\text{max}} \cdots \tilde{e}_{i_1}^{\text{max}} \left( b \otimes b^\prime \right)$ is not $0$, the element $\tilde{e}_{i_l}^{r_l} \cdots \tilde{e}_{i_1}^{r_1} (b^\prime)$ is not $0$. Therefore, we conclude that the element $\tilde{e}_{i_l}^{\text{max}} \cdots \tilde{e}_{i_1}^{\text{max}} \left( b \otimes b^\prime \right)$ belongs to $\mathcal{B}_e(\lambda) \otimes \mathcal{B}_w(\mu)$.
\end{proof}

\begin{proof}[Proof of Lemma \ref{mainthm_3}]
Take $b \in \mathcal{B}_w(\mu)$. Fix $k \in \{ 1, \ldots, l \}$. Since $\ell(s_{i_k}w) < \ell(w)$, we have $\tilde{f}_{i_k} (b) \in \mathcal{B}_w(\mu) \sqcup \{ 0 \}$ by Lemma \ref{f(Dem)}(2).

Let us prove that for all $b \in \mathcal{B}_w(\mu)$ and nonnegative integers $a_1, \ldots, a_l$, one has $\tilde{f}_{i_1}^{a_1} \cdots \tilde{f}_{i_l}^{a_l} \left( b_\lambda \otimes b \right) \in \mathcal{B}_v(\lambda) \otimes \mathcal{B}_w(\mu)$ if $\tilde{f}_{i_1}^{a_1} \cdots \tilde{f}_{i_l}^{a_l} \left( b_\lambda \otimes b \right)$ is defined. Now assume that $\tilde{f}_{i_1}^{a_1} \cdots \tilde{f}_{i_l}^{a_l} \left( b_\lambda \otimes b \right) \not= 0$. By the tensor product rule, there is nonnegative integers $c_l, d_l$ such that
\begin{align*}
\tilde{f}_{i_l}^{a_l} \left( b_\lambda \otimes b \right) = \tilde{f}_{i_l}^{c_l} (b_\lambda) \otimes \tilde{f}_{i_l}^{d_l} (b).
\end{align*}
By the discussion above, we have $\tilde{f}_{i_l}^{d_l} (b) \in \mathcal{B}_w(\mu)$ since $\tilde{f}_{i_1}^{a_1} \cdots \tilde{f}_{i_l}^{a_l} \left( b_\lambda \otimes b \right)$ is not $0$. In the same way, we obtain nonnegative integers $c_1, \ldots, c_l$ and $d_1, \ldots, d_l$ such that
\begin{align*}
\tilde{f}_{i_1}^{a_1} \cdots \tilde{f}_{i_l}^{a_l} \left( b_\lambda \otimes b \right) = \tilde{f}_{i_1}^{c_1} \cdots \tilde{f}_{i_l}^{c_l} (b_\lambda) \otimes \tilde{f}_{i_1}^{d_1} \cdots \tilde{f}_{i_l}^{d_l} (b),
\end{align*}
with $\tilde{f}_{i_1}^{d_1} \cdots \tilde{f}_{i_l}^{d_l} (b) \in \mathcal{B}_w(\mu)$. Since $(i_1, \ldots, i_l)$ is a reduced word for $v$ and since $\tilde{f}_{i_1}^{a_1} \cdots \tilde{f}_{i_l}^{a_l} \left( b_\lambda \otimes b \right) \not= 0$, one has $\tilde{f}_{i_1}^{c_1} \cdots \tilde{f}_{i_l}^{c_l} (b_\lambda) \in \mathcal{B}_v(\lambda)$. Therefore, we conclude that $\tilde{f}_{i_1}^{a_1} \cdots \tilde{f}_{i_l}^{a_l} \left( b_\lambda \otimes b \right)$ is contained in $\mathcal{B}_v(\lambda) \otimes \mathcal{B}_w(\mu)$.
\end{proof}

Combining these lemmas, we obtain the following assertion.

\begin{lem}\label{mainthm_5}
Let $\lambda$ and $\mu$ be dominant integral weights, $w$ an element of $W$, and $v$ an element of $W_w$, and take a reduced expression $v = s_{i_1} \cdots s_{i_l}$. Then we have $\mathcal{B}_v(\lambda) \otimes \mathcal{B}_w(\mu) = \mathcal{B}_{(i_1, \ldots, i_l), w, \lambda, \mu}$.
\end{lem}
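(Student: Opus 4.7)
The plan is very short: Lemma \ref{mainthm_5} is nothing more than the conjunction of the two preceding lemmas, so I would simply combine them. The hypothesis of Lemma \ref{mainthm_2} is weaker than that of Lemma \ref{mainthm_5} (only $v, w \in W$ and a reduced expression for $v$ are required), so the inclusion
\[
\mathcal{B}_v(\lambda) \otimes \mathcal{B}_w(\mu) \subset \mathcal{B}_{(i_1, \ldots, i_l), w, \lambda, \mu}
\]
is immediate. The hypothesis $v \in W_w$ of Lemma \ref{mainthm_3} is exactly the hypothesis of Lemma \ref{mainthm_5}, so the reverse inclusion
\[
\mathcal{B}_{(i_1, \ldots, i_l), w, \lambda, \mu} \subset \mathcal{B}_v(\lambda) \otimes \mathcal{B}_w(\mu)
\]
also applies verbatim. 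Together, these give the desired equality.

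Because the two halves are already carried out above, the only thing to check in writing up the proof is that both lemmas are being applied to the same fixed reduced expression $v = s_{i_1} \cdots s_{i_l}$ (they are, since the word $(i_1, \ldots, i_l)$ enters the definition of $\mathcal{B}_{(i_1, \ldots, i_l), w, \lambda, \mu}$ and the string-parametrization argument of Lemma \ref{mainthm_2} uses an arbitrary reduced expression). There is no genuine obstacle here; the substantive work was done in Lemmas \ref{mainthm_2} and \ref{mainthm_3}, and the present statement is essentially a one-line corollary. Accordingly, the write-up should be a single short paragraph recording the two inclusions and citing the two lemmas.
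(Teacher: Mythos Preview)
Your proposal is correct and matches the paper's approach exactly: the paper simply states that the lemma follows by combining Lemmas \ref{mainthm_2} and \ref{mainthm_3}, which is precisely what you do. There is nothing to add.
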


\begin{proof}[Proof of Theorem \ref{mainthm_1}]
Since $\mathcal{B}_v(\lambda) = \mathcal{B}_{\lfloor v \rfloor^\lambda}(\lambda)$ and $\mathcal{B}_w(\mu) = \mathcal{B}_{\lceil w \rceil^\mu}(\mu)$, we obtain the desired assertion by Lemma \ref{mainthm_5}. More precisely, if we take a reduced expression $\lfloor v \rfloor^\lambda = s_{i_1} \cdots s_{i_l}$, then we have
\begin{align*}
\mathcal{B}_v(\lambda) \otimes \mathcal{B}_w(\mu) &= \mathcal{B}_{\lfloor v \rfloor^\lambda}(\lambda) \otimes \mathcal{B}_{\lceil w \rceil^\mu}(\mu) \\
&= \bigcup_{a_1, \ldots, a_l \geq 0} \tilde{f}_{i_1}^{a_1} \cdots \tilde{f}_{i_l}^{a_l} \left( \mathcal{B}_e(\lambda) \otimes \mathcal{B}_{\lceil w \rceil^\mu}(\mu) \right) \setminus \{ 0 \} \\
&= \bigcup_{a_1, \ldots, a_l \geq 0} \tilde{f}_{i_1}^{a_1} \cdots \tilde{f}_{i_l}^{a_l} \left( \mathcal{B}_e(\lambda) \otimes \mathcal{B}_{w}(\mu) \right) \setminus \{ 0 \} \\
&= \mathcal{B}_{(i_1, \ldots, i_l), w, \lambda, \mu}.
\end{align*}
\end{proof}
Now we prove Corollary \ref{maincor}.

\begin{proof}[Proof of Corollary \ref{maincor}]
Fix a reduced decomposition $\lfloor v \rfloor^\lambda = s_{i_1} \cdots s_{i_l}$. By Theorem \ref{mainthm_1} and Theorem \ref{1tensDem}, we have
\begin{align}
\mathcal{B}_v(\lambda) \otimes \mathcal{B}_w(\mu) &= \bigcup_{a_1, \ldots, a_l \geq 0} \tilde{f}_{i_1}^{a_1} \cdots \tilde{f}_{i_l}^{a_l} \left( \mathcal{B}_e(\lambda) \otimes \mathcal{B}_w(\mu) \right) \setminus \{ 0 \} \nonumber \\
&= \bigcup_{a_1, \ldots, a_l \geq 0} \tilde{f}_{i_1}^{a_1} \cdots \tilde{f}_{i_l}^{a_l} \left( \bigsqcup_{\pi \in \mathcal{B}_w(\mu)^\lambda} C(\pi, e) \right) \setminus \{ 0 \} \nonumber \\
&= \bigcup_{\pi \in \mathcal{B}_w(\mu)^\lambda} \left( \bigcup_{a_1, \ldots, a_l \geq 0} \tilde{f}_{i_1}^{a_1} \cdots \tilde{f}_{i_l}^{a_l} ( C(\pi, e) ) \setminus \{ 0 \}\right). \label{union}
\end{align}
Since the subset $\bigcup_{a_1, \ldots, a_l \geq 0} \tilde{f}_{i_1}^{a_1} \cdots \tilde{f}_{i_l}^{a_l} ( C(\pi, e) ) \setminus \{ 0 \}$ of $\mathcal{B}_v(\lambda) \otimes \mathcal{B}_w(\mu)$ is connected and it contains $\pi^\lambda \otimes \pi$, one has 
\begin{align*}
\bigcup_{a_1, \ldots, a_l \geq 0} \tilde{f}_{i_1}^{a_1} \cdots \tilde{f}_{i_l}^{a_l} ( C(\pi, e) ) \setminus \{ 0 \} \subset C(\pi, v).
\end{align*}
Hence the union in (\ref{union}) is disjoint, that is, we have
\begin{align*}
\mathcal{B}_v(\lambda) \otimes \mathcal{B}_w(\mu) &= \bigsqcup_{\pi \in \mathcal{B}_w(\mu)^\lambda} \left( \bigcup_{a_1, \ldots, a_l \geq 0} \tilde{f}_{i_1}^{a_1} \cdots \tilde{f}_{i_l}^{a_l} ( C(\pi, e) ) \setminus \{ 0 \}\right).
\end{align*}

Assume that
\begin{align*}
\bigcup_{a_1, \ldots, a_l \geq 0} \tilde{f}_{i_1}^{a_1} \cdots \tilde{f}_{i_l}^{a_l} ( C(\pi, e) ) \setminus \{ 0 \} \subsetneq C(\pi, v)
\end{align*}
for some $\pi \in \mathcal{B}_w(\mu)^\lambda$. Then we see that
\begin{align*}
\mathcal{B}_v(\lambda) \otimes \mathcal{B}_w(\mu) &= \bigsqcup_{\pi \in \mathcal{B}_w(\mu)^\lambda} \left( \bigcup_{a_1, \ldots, a_l \geq 0} \tilde{f}_{i_1}^{a_1} \cdots \tilde{f}_{i_l}^{a_l} ( C(\pi, e) ) \setminus \{ 0 \}\right) \\
&\subsetneq \bigsqcup_{\pi \in \mathcal{B}_w(\mu)^\lambda} C(\pi, v) \\
&= \mathcal{B}_v(\lambda) \otimes \mathcal{B}_w(\mu),
\end{align*}
which is a contradiction. Thus, for all $\pi \in \mathcal{B}_w(\mu)^\lambda$, we have
\begin{align*}
\bigcup_{a_1, \ldots, a_l \geq 0} \tilde{f}_{i_1}^{a_1} \cdots \tilde{f}_{i_l}^{a_l} ( C(\pi, e) ) \setminus \{ 0 \} = C(\pi, v).
\end{align*}

Let $\pi \in \mathcal{B}_w(\mu)^\lambda$. Since $C(\pi, e) \simeq \mathcal{B}_{w(\pi)}(\lambda + \wt (\pi))$ by Theorem \ref{1tensDem}, it follows that
\begin{align*}
C(\pi, v) &= \bigcup_{a_1, \ldots, a_l \geq 0} \tilde{f}_{i_1}^{a_1} \cdots \tilde{f}_{i_l}^{a_l} ( C(\pi, e) ) \setminus \{ 0 \} \\
&\simeq \bigcup_{a_1, \ldots, a_l \geq 0} \tilde{f}_{i_1}^{a_1} \cdots \tilde{f}_{i_l}^{a_l} ( \mathcal{B}_{w(\pi)}(\lambda + \wt (\pi)) ) \setminus \{ 0 \}.
\end{align*}
Hence, to prove the corollary, it is sufficient to show that 
\begin{align*}
\bigcup_{a_1, \ldots, a_l \geq 0} \tilde{f}_{i_1}^{a_1} \cdots \tilde{f}_{i_l}^{a_l} ( \mathcal{B}_{w(\pi)}(\lambda + \wt(\pi)) ) \setminus \{ 0 \} = \mathcal{B}_{u(\pi, v)}(\lambda + \wt(\pi))
\end{align*}
for all $\pi \in \mathcal{B}_w(\mu)^\lambda$. Let $\pi \in \mathcal{B}_w(\mu)^\lambda$. By Lemma \ref{f(Dem)}, we have
\begin{align*}
& \ \bigcup_{a_l \geq 0} \tilde{f}_{i_l}^{a_l} ( \mathcal{B}_{w(\pi)}(\lambda + \wt(\pi)) ) \setminus \{ 0 \}, \\
=& \ \begin{cases} \mathcal{B}_{s_{i_l}w(\pi)}(\lambda + \wt(\pi)) & \text{ if } \ell (s_{i_l}w(\pi)) > \ell (w(\pi)) \\ \mathcal{B}_{w(\pi)}(\lambda + \wt(\pi)) & \text{ if } \ell (s_{i_l}w(\pi)) < \ell (w(\pi)) \end{cases} \\
=& \ \mathcal{B}_{u_l}(\lambda + \wt(\pi)).
\end{align*}

Now we proceed by induction. Suppose that
\begin{align*}
\bigcup_{a_j, \ldots, a_l \geq 0} \tilde{f}_{i_j}^{a_j} \cdots \tilde{f}_{i_l}^{a_l} ( \mathcal{B}_{w(\pi)}(\lambda + \wt(\pi)) ) \setminus \{ 0 \} = \mathcal{B}_{u_j}(\lambda + \wt(\pi))
\end{align*}
for $j \in \{2, \ldots, l \}$. Again by Lemma \ref{f(Dem)}, one has
\begin{align*}
& \ \bigcup_{a_{j-1}, \ldots, a_l \geq 0} \tilde{f}_{i_{j-1}}^{a_{j-1}} \cdots \tilde{f}_{i_l}^{a_l} ( \mathcal{B}_{w(\pi)}(\lambda + \wt(\pi)) ) \setminus \{ 0 \} \\
=& \ \bigcup_{a_{j-1} \geq 0} \tilde{f}_{i_{j-1}}^{a_{j-1}} \left( \bigcup_{a_j, \ldots, a_l \geq 0} \tilde{f}_{i_j}^{a_j} \cdots \tilde{f}_{i_l}^{a_l} ( \mathcal{B}_{w(\pi)}(\lambda + \wt(\pi)) ) \setminus \{ 0 \}\right) \setminus \{ 0\} \\
=& \ \bigcup_{a_{j-1} \geq 0} \tilde{f}_{i_{j-1}}^{a_{j-1}} (\mathcal{B}_{u_j}(\lambda + \wt(\pi))) \setminus \{ 0\} \\
=& \ \begin{cases} \mathcal{B}_{s_{i_{j-1}}u_j} (\lambda + \wt(\pi)) & \text{ if } \ell(s_{i_{j-1}}u_j) > \ell(u_j), \\ \mathcal{B}_{u_j} (\lambda + \wt(\pi)) & \text{ if } \ell(s_{i_{j-1}}u_j) < \ell(u_j)\end{cases} \\
=& \ \mathcal{B}_{u_{j-1}}(\lambda+\wt(\pi)).
\end{align*}

Therefore, we conclude that
\begin{align*}
& \ \bigcup_{a_1, \ldots, a_l \geq 0} \tilde{f}_{i_1}^{a_1} \cdots \tilde{f}_{i_l}^{a_l} ( \mathcal{B}_{w(\pi)}(\lambda + \wt(\pi)) ) \setminus \{ 0 \} \\ =& \ \mathcal{B}_{u_1}(\lambda + \wt(\pi)) \\
=& \ \mathcal{B}_{u(\pi, v)}(\lambda + \wt(\pi)).
\end{align*}
\end{proof}

\subsection{Proof of Theorem \ref{mainthm_4}}

The following lemma is important.

\begin{lem}\label{key}
For $\mu \in P^+$, $w \in W$, and $i \in I$, $\ell(s_{i} \lceil w \rceil^\mu) > \ell(\lceil w \rceil^\mu)$ holds if and only if $\langle w\mu , \alpha_{i}^\vee \rangle > 0$.
\end{lem}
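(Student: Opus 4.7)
The plan is to reduce the question to the reduced-root-condition using Lemma \ref{length1} together with the defining maximality property of $\lceil w \rceil^\mu$. Write $w_\mu := \lceil w \rceil^\mu$. Since $wW_\mu = w_\mu W_\mu$ we have $w\mu = w_\mu \mu$, so by the $W$-invariance of the pairing
\begin{align*}
\langle w\mu, \alpha_i^\vee \rangle = \langle w_\mu \mu, \alpha_i^\vee \rangle = \langle \mu, w_\mu^{-1} \alpha_i^\vee \rangle.
\end{align*}
By Lemma \ref{length1} together with Corollary \ref{length2}, the length condition $\ell(s_i w_\mu) > \ell(w_\mu)$ is equivalent to $w_\mu^{-1} \alpha_i \in \Phi^+$. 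Thus the claim boils down to showing that $w_\mu^{-1}\alpha_i \in \Phi^+$ is equivalent to $\langle \mu, w_\mu^{-1}\alpha_i^\vee \rangle > 0$.

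For the implication $(\Leftarrow)$, I would use only dominance of $\mu$: if $w_\mu^{-1}\alpha_i$ were in $-\Phi^+$, then $\langle \mu, w_\mu^{-1}\alpha_i^\vee\rangle \le 0$ because $\mu \in P^+$ pairs nonnegatively with every positive coroot. Hence the hypothesis $\langle \mu, w_\mu^{-1}\alpha_i^\vee \rangle > 0$ forces $w_\mu^{-1}\alpha_i \in \Phi^+$.

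For the converse $(\Rightarrow)$, set $\beta := w_\mu^{-1}\alpha_i \in \Phi^+$. Suppose for contradiction that $\langle \mu, \beta^\vee \rangle = 0$. Then $s_\beta \in W_\mu$ (a standard fact, since $W_\mu$ is generated by the reflections it contains, and $s_\beta$ fixes $\mu$ precisely when $\langle \mu,\beta^\vee\rangle = 0$). From $w_\mu \beta = \alpha_i$ we get
\begin{align*}
s_i \;=\; s_{w_\mu \beta} \;=\; w_\mu\, s_\beta\, w_\mu^{-1},
\end{align*}
hence $s_i w_\mu = w_\mu s_\beta \in w_\mu W_\mu = w W_\mu$. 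By the maximality defining $\lceil w \rceil^\mu$, every element of $wW_\mu$ has length $\le \ell(w_\mu)$, so $\ell(s_i w_\mu) \le \ell(w_\mu)$, contradicting the assumption $\ell(s_i w_\mu) > \ell(w_\mu)$. Therefore $\langle \mu, \beta^\vee \rangle > 0$, which is the desired strict inequality.

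The main (and essentially only) obstacle is the converse: it is not enough to know $w_\mu^{-1}\alpha_i$ is a positive root, since dominance a priori only yields $\langle \mu, w_\mu^{-1}\alpha_i^\vee\rangle \ge 0$. Upgrading this to strict positivity is exactly where the choice of the \emph{maximal}-length representative $\lceil w \rceil^\mu$ (rather than an arbitrary element of $wW_\mu$) is used, via the identity $s_i w_\mu = w_\mu s_\beta$ with $s_\beta \in W_\mu$.
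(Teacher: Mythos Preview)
Your proof is correct and follows essentially the same approach as the paper: both reduce to the sign of $w_\mu^{-1}\alpha_i$ via Lemma~\ref{length1}/Corollary~\ref{length2}, use dominance of $\mu$ for the $(\Leftarrow)$ direction, and for $(\Rightarrow)$ rule out the case $\langle \mu, w_\mu^{-1}\alpha_i^\vee\rangle = 0$ by showing that it would force $s_i w_\mu \in wW_\mu$, contradicting maximality of $\lceil w\rceil^\mu$. The only cosmetic difference is that the paper observes $s_i w_\mu\,\mu = w_\mu\,\mu$ directly, whereas you obtain $s_i w_\mu = w_\mu s_\beta \in w_\mu W_\mu$ via the conjugation identity $s_i = w_\mu s_\beta w_\mu^{-1}$; these are equivalent one-line arguments.
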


\begin{proof}
First of all, observe that $\langle w\mu , \alpha_{i}^\vee \rangle = \langle \lceil w \rceil^\mu \mu , \alpha_{i}^\vee \rangle = \langle \mu , (\lceil w \rceil^\mu)^{-1} \alpha_{i}^\vee \rangle$. 

Assume that $\ell(s_{i} \lceil w \rceil^\mu) > \ell(\lceil w \rceil^\mu)$. Then, $(\lceil w \rceil^\mu)^{-1} \alpha_{i}^\vee$ is a positive coroot by Corollary \ref{keycor}. Since $\mu$ is a dominant integral weight, we have $\langle \mu , (\lceil w \rceil^\mu)^{-1} \alpha_{i}^\vee \rangle \geq 0$. Hence $\langle w\mu , \alpha_{i}^\vee \rangle \geq 0$. Now suppose that $\langle w\mu , \alpha_i^\vee \rangle = 0$. Then $\langle \lceil w \rceil^\mu \mu , \alpha_i^\vee \rangle = \langle w\mu , \alpha_i^\vee \rangle = 0$, and hence $s_i \lceil w \rceil^\mu \mu = \lceil w \rceil^\mu \mu = w\mu$. This implies that $s_i \lceil w \rceil^\mu \in wW_\mu$. Since $\ell(s_{i} \lceil w \rceil^\mu) > \ell(\lceil w \rceil^\mu)$, this contradicts the fact that $\lceil w \rceil^\mu$ is maximal in $wW_\mu$. Thus $\langle w\mu, \alpha_i^\vee \rangle$ is not equal to $0$. Hence one has $\langle w\mu, \alpha_i^\vee \rangle > 0$.

Next, assume that $\langle w\mu, \alpha_i^\vee \rangle > 0$. Then $\langle \mu , (\lceil w \rceil^\mu)^{-1} \alpha_{i}^\vee \rangle > 0$. Since $\mu$ is a dominant integral weight, $(\lceil w \rceil^\mu)^{-1} \alpha_{i}^\vee$ must be a positive coroot. By Lemma \ref{length1} and Corollary \ref{length2}, we have $\ell(s_{i} \lceil w \rceil^\mu) = \ell(\lceil w \rceil^\mu) + 1 > \ell(\lceil w \rceil^\mu)$.
\end{proof}

\begin{proof}[Proof of Theorem \ref{mainthm_4}]
Fix a reduced expression $\lfloor v \rfloor^{\lambda} = s_{i_1} \cdots s_{i_l}$. Since $\lfloor v \rfloor^\lambda \not\in W_{\lceil w \rceil^\mu}$ by the assumption, there exists $k \in \{1, \ldots, l \}$ such that $\ell(s_{i_k} \lceil w \rceil^\mu) > \ell(\lceil w \rceil^\mu)$. For this $k$, we consider $\pi := \pi^{s_{i_{k+1}} \cdots s_{i_l} \lambda} \otimes \pi^{w\mu} \in \mathcal{B}_v(\lambda) \otimes \mathcal{B}_w(\mu)$. Let $C$ denote the connected component of $\mathcal{B}_v(\lambda) \otimes \mathcal{B}_w(\mu)$ containing $\pi$.

By Lemma \ref{key}, one has $\langle w\mu, \alpha_{i_k}^\vee \rangle >0$, which shows that the height function $h_{i_k}^{\pi^{w\mu}}(t) = \langle \pi^{w\mu}(t), \alpha_{i_k}^\vee \rangle$ of the straight-line path $\pi^{w\mu}$ is strictly increasing on $[0, 1]$. Hence $\tilde{f}_{i_k}(\pi^{w\mu}) \not= 0$ and $\tilde{e}_{i_k}(\pi^{w\mu}) = 0$. Since $w\mu$ is the lowest weight of $\mathcal{B}_w(\mu)$ and $\wt (\tilde{f}_{i_k}(\pi^{w\mu})) = w\mu - \alpha_{i_k}$ is less than $w\mu$, $\tilde{f}_{i_k}(\pi^{w\mu}) \not\in \mathcal{B}_w(\mu) \sqcup \{ 0 \}$.

Now we prove that $\tilde{f}_{i_k} (\pi^{s_{i_{k+1}} \cdots s_{i_l} \lambda}) \in \mathcal{B}_v(\lambda)$. First we show that the element $\tilde{f}_{i_k}(\pi^{s_{i_{k+1}} \cdots s_{i_l} \lambda})$ is not $0$.
Suppose that $\langle s_{i_{k+1}} \cdots s_{i_l} \lambda, \alpha_{i_k}^\vee \rangle \leq 0$. If $\langle s_{i_{k+1}} \cdots s_{i_l} \lambda, \alpha_{i_k}^\vee \rangle < 0$, then $\langle \lambda, (s_{i_{k+1}} \cdots s_{i_l} )^{-1} \alpha_{i_k}^\vee \rangle < 0$. Since $\lambda$ is a dominant integral weight, $(s_{i_{k+1}} \cdots s_{i_l} )^{-1} \alpha_{i_k}^\vee$ must be a negative coroot. By Lemma \ref{length1} and Corollary \ref{length2}, we have $\ell(s_{i_k}s_{i_{k+1}} \cdots s_{i_l}) < \ell(s_{i_{k+1}} \cdots s_{i_l})$, which contradicts the fact that $s_{i_1} \cdots s_{i_l}$ is reduced. Thus, $\langle s_{i_{k+1}} \cdots s_{i_l} \lambda, \alpha_{i_k}^\vee \rangle$ must be $0$. In this case, $s_{i_k}s_{i_{k+1}} \cdots s_{i_l} \lambda = s_{i_{k+1}} \cdots s_{i_l} \lambda$. Therefore,
\begin{align*}
v\lambda &= \lfloor v \rfloor^\lambda \lambda \\
&= s_{i_1} \cdots s_{i_{k-1}} s_{i_k}s_{i_{k+1}} \cdots s_{i_l} \lambda \\
&= s_{i_1} \cdots s_{i_{k-1}}s_{i_{k+1}} \cdots s_{i_l} \lambda,
\end{align*}
and hence $s_{i_1} \cdots s_{i_{k-1}}s_{i_{k+1}} \cdots s_{i_l} \in vW_\lambda$. However, this contradicts the fact that $\lfloor v \rfloor^\lambda$ is minimal in $vW_\lambda$ since $\ell(s_{i_1} \cdots s_{i_{k-1}}s_{i_{k+1}} \cdots s_{i_l}) \leq l-1 < l = \ell(\lfloor v \rfloor^\lambda)$.
Thus, $\langle s_{i_{k+1}} \cdots s_{i_l} \lambda, \alpha_{i_k}^\vee \rangle >0$. This implies that the height function $h_{i_k}^{\pi^{s_{i_{k+1}} \cdots s_{i_l} \lambda}}$ is strictly increasing on $[0, 1]$. Hence $\tilde{f}_{i_k}(\pi^{s_{i_{k+1}} \cdots s_{i_l} \lambda}) \not= 0$. Next we show that $\tilde{f}_{i_k}(\pi^{s_{i_{k+1}} \cdots s_{i_l} \lambda}) \in \mathcal{B}_v(\lambda)$. To prove this, it is sufficient to see that the initial direction of $\tilde{f}_{i_k}(\pi^{s_{i_{k+1}} \cdots s_{i_l} \lambda})$ is less than or equal to $v\lambda$. The initial direction of $\tilde{f}_{i_k}(\pi^{s_{i_{k+1}} \cdots s_{i_l} \lambda})$ is equal to $s_{i_k}s_{i_{k+1}} \cdots s_{i_l} \lambda$. Hence we need to show that $s_{i_k}s_{i_{k+1}} \cdots s_{i_l} \lambda \leq v\lambda$. By the same proof as that of $\langle s_{i_{k+1}} \cdots s_{i_l} \lambda, \alpha_{i_k}^\vee \rangle >0$ above, we can show that $\langle s_{i_{r+1}} \cdots s_{i_l} \lambda, \alpha_{i_r}^\vee \rangle >0$ for all $r \in \{1, \ldots, k\}$. From these inequalities, we deduce that $s_{i_{k+1}} \cdots s_{i_l} \lambda < s_{i_1} \cdots s_{i_k}s_{i_{k+1}} \cdots s_{i_l} \lambda = \lfloor v \rfloor^\lambda \lambda = v\lambda$, as desired.

Suppose, for a contradiction, that $C$ is isomorphic to some Demazure crystal. Recall that $\tilde{f}_{i_k} (\pi^{s_{i_{k+1}} \cdots s_{i_l} \lambda}) \not= 0$ and that $\tilde{e}_{i_k}(\pi^{w\mu}) = 0$. Therefore, $\varphi_{i_k} (\pi^{s_{i_{k+1}} \cdots s_{i_l} \lambda}) > 0 = \varepsilon_{i_k} (\pi^{w\mu})$, and hence we have $\tilde{f}_{i_k}(\pi^{s_{i_{k+1}} \cdots s_{i_l} \lambda} \otimes \pi^{w\mu}) = \tilde{f}_{i_k}(\pi^{s_{i_{k+1}} \cdots s_{i_l} \lambda}) \otimes \pi^{w\mu} \in \mathcal{B}_v(\lambda) \otimes \mathcal{B}_w(\mu)$. This implies that $\tilde{f}_{i_k}(\pi^{s_{i_{k+1}} \cdots s_{i_l} \lambda} \otimes \pi^{w\mu}) \in C$. Let $n = \varphi_{i_k}(\pi^{s_{i_{k+1}} \cdots s_{i_l} \lambda})$.
By the tensor product rule, we obtain that
\begin{align*}
\tilde{f}_{i_k}^{n+1}(\pi^{s_{i_{k+1}} \cdots s_{i_l} \lambda} \otimes \pi^{w\mu}) = \tilde{f}_{i_k}^{n}(\pi^{s_{i_{k+1}} \cdots s_{i_l} \lambda}) \otimes \tilde{f}_{i_k}(\pi^{w\mu}) \not= 0.
\end{align*}
However, $\tilde{f}_{i_k}(\pi^{w\mu}) \not\in \mathcal{B}_w(\mu)$, and hence $\tilde{f}_{i_k}^{n+1}(\pi^{s_{i_{k+1}} \cdots s_{i_l} \lambda} \otimes \pi^{w\mu}) \not\in \mathcal{B}_v(\lambda) \otimes \mathcal{B}_w(\mu)$. Thus $\tilde{f}_{i_k}^{n+1}(\pi^{s_{i_{k+1}} \cdots s_{i_l} \lambda} \otimes \pi^{w\mu}) \not\in C$. This contradicts Lemma \ref{string}. Therefore, $C$ is not isomorphic to any Demazure crystal.
\end{proof}

\section{The recursive formula describing connected components}\label{recursion}

In this section, we consider a recursive formula describing connected components of tensor products of Demazure crystals. We prove the following theorem in this section.
\begin{thm}\label{main_rec}
Let $\lambda, \mu$ be dominant integral weights, and $v, w$ elements of the Weyl group $W$. Take $i \in I$ which satisfies $\ell (s_i v) > \ell (v)$. Also, we take $\pi \in \mathcal{B}_w(\mu)^\lambda$. Then we have the following formula.
\begin{align*}
& \ C(\pi, s_i v) \\
=& \ \left( \bigcup_{a \geq 0} \tilde{f}_i^a(C(\pi, v)) \setminus \{ 0 \} \right) \\
& \ \setminus \left\{ \tilde{f}_{i}^{\langle \wt(\pi_1), \alpha_i^\vee \rangle} (\pi_1) \otimes \tilde{f}_{i}^b(\pi_2) \left| \begin{array}{l}\pi_1 \otimes \pi_2 \in C(\pi, v), \\ \tilde{e}_{i}(\pi_1) = 0, \ \tilde{f}_{i}(\pi_2) \not\in \mathcal{B}_w(\mu) \sqcup \{ 0 \}, \\ 1 \leq b \leq \langle \wt(\pi_2), \alpha_i^\vee \rangle. \end{array} \right. \right\}.
\end{align*}
\end{thm}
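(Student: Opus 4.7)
The plan is to first rewrite the subtracted set on the right as the precise ``overflow'' of $\bigcup_{a \geq 0} \tilde{f}_i^a(C(\pi, v))$ past $\mathcal{B}_{s_i v}(\lambda) \otimes \mathcal{B}_w(\mu)$, so that the right-hand side of the theorem equals
\begin{align*}
\left( \bigcup_{a \geq 0} \tilde{f}_i^a(C(\pi, v)) \setminus \{0\} \right) \cap \bigl( \mathcal{B}_{s_i v}(\lambda) \otimes \mathcal{B}_w(\mu) \bigr),
\end{align*}
and then establish the two inclusions between this set and $C(\pi, s_i v)$. To justify the rewriting, fix $x \in C(\pi, v)$ and let $\pi_1 \otimes \pi_2 := \tilde{e}_i^{\max}(x)$ be the top of the $i$-string through $x$ in $\mathcal{B}(\lambda) \otimes \mathcal{B}(\mu)$. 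Closure of both $C(\pi, v)$ and $\mathcal{B}_v(\lambda) \otimes \mathcal{B}_w(\mu)$ under $\tilde{e}_i$ gives $\pi_1 \otimes \pi_2 \in C(\pi, v)$ with $\tilde{e}_i(\pi_1) = 0$ and $\pi_2 \in \mathcal{B}_w(\mu)$. Setting $P := \varphi_i(\pi_1) = \langle \wt(\pi_1), \alpha_i^\vee \rangle$ and $r := \varepsilon_i(\pi_2)$, the standard tensor-product $i$-string structure gives $\tilde{f}_i^n(\pi_1 \otimes \pi_2) = \tilde{f}_i^n(\pi_1) \otimes \pi_2$ for $0 \leq n \leq P - r$ and $\tilde{f}_i^{P-r}(\pi_1) \otimes \tilde{f}_i^{n-(P-r)}(\pi_2)$ for larger $n$, and the first tensor factor always lies in $\mathcal{B}_{s_i v}(\lambda)$ by Lemma~\ref{f(Dem)}(1).

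The second tensor factor can leave $\mathcal{B}_w(\mu)$ only in the second range. The string property (Lemma~\ref{string}) applied to $\mathcal{B}_w(\mu) \subset \mathcal{B}(\mu)$ shows that this happens if and only if $\pi_2$ is itself the top of its $i$-string in $\mathcal{B}(\mu)$ (forcing $r = 0$) and $\tilde{f}_i(\pi_2) \notin \mathcal{B}_w(\mu) \sqcup \{0\}$; the corresponding ``bad'' elements are then $\tilde{f}_i^{\langle \wt(\pi_1), \alpha_i^\vee \rangle}(\pi_1) \otimes \tilde{f}_i^b(\pi_2)$ for $1 \leq b \leq \langle \wt(\pi_2), \alpha_i^\vee \rangle$, which is exactly the set being subtracted in the theorem.

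For the inclusion of the displayed set into $C(\pi, s_i v)$, take $y = \tilde{f}_i^a(x)$ with $x \in C(\pi, v)$ and $y \in \mathcal{B}_{s_i v}(\lambda) \otimes \mathcal{B}_w(\mu)$. The same $i$-string analysis shows that every intermediate $\tilde{f}_i^k(x)$ for $0 \leq k \leq a$ lies in $\mathcal{B}_{s_i v}(\lambda) \otimes \mathcal{B}_w(\mu)$: either the $i$-string of $\pi_2$ lies entirely in $\mathcal{B}_w(\mu)$ by Lemma~\ref{string}, or the requirement that $y$ stay in $\mathcal{B}_{s_i v}(\lambda) \otimes \mathcal{B}_w(\mu)$ forces the whole chain to remain in the first-factor-moving portion where the second factor is $\pi_2 \in \mathcal{B}_w(\mu)$. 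Hence $y$ is connected to $x$, and therefore to $\pi^\lambda \otimes \pi$, inside $\mathcal{B}_{s_i v}(\lambda) \otimes \mathcal{B}_w(\mu)$, giving $y \in C(\pi, s_i v)$.

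For the reverse inclusion, take $y \in C(\pi, s_i v)$ and set $y_0 := \tilde{e}_i^{\max}(y) = y_1 \otimes y_2$. Closure of $\mathcal{B}_{s_i v}(\lambda)$, $\mathcal{B}_w(\mu)$, and $C(\pi, s_i v)$ under $\tilde{e}_i$ gives $y_0 \in C(\pi, s_i v)$, $y_1 \in \mathcal{B}_{s_i v}(\lambda)$, $y_2 \in \mathcal{B}_w(\mu)$, and $\tilde{e}_i(y_1) = 0$. Combining Lemma~\ref{f(Dem)}(1) with the string property for $\mathcal{B}_v(\lambda)$ shows that the top of any $i$-string in $\mathcal{B}_{s_i v}(\lambda)$ must already lie in $\mathcal{B}_v(\lambda)$ (otherwise Lemma~\ref{string} makes the whole $i$-string disjoint from $\mathcal{B}_v(\lambda)$ and hence, by Lemma~\ref{f(Dem)}(1), from $\mathcal{B}_{s_i v}(\lambda)$, contradicting $y_1 \in \mathcal{B}_{s_i v}(\lambda)$). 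Thus $y_1 \in \mathcal{B}_v(\lambda)$ and $y_0 \in \mathcal{B}_v(\lambda) \otimes \mathcal{B}_w(\mu)$. By the decomposition of Theorem~\ref{LRLSver}, $y_0 \in C(\pi', v) \subset C(\pi', \lambda, \mu)$ for some $\pi' \in \mathcal{B}_w(\mu)^\lambda$; but also $y_0 \in C(\pi, s_i v) \subset C(\pi, \lambda, \mu)$, and disjointness of the components of $\mathcal{B}(\lambda) \otimes \mathcal{B}(\mu)$ forces $\pi' = \pi$, so $y_0 \in C(\pi, v)$. Therefore $y = \tilde{f}_i^{\varepsilon_i(y)}(y_0)$ belongs to the right-hand side. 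The main obstacle is the careful identification of the subtracted set as the precise overflow via the tensor $i$-string structure together with Lemma~\ref{string}; once this is pinned down, both inclusions reduce to the stability of Demazure crystals under raising operators and the disjointness of the LR-decomposition.
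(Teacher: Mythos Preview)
Your proof is correct and takes a genuinely different, more direct route than the paper's. The paper works through an auxiliary object $D(\pi,\text{\boldmath $i$})$, the connected component of a generalized Demazure crystal $\mathcal{B}_{\text{\boldmath $i$},w,\lambda,\mu}$; it invokes the nontrivial fact (from \cite{Magyar}) that each $D(\pi,\text{\boldmath $i$})$ is itself a Demazure crystal in order to use the string property there, then passes through a chain of intermediate sets $\varDelta'(\pi,v,\text{\boldmath $i$},i)$, $E'(\pi,v,i)$, and $\widetilde{C}(\pi,v,i)$ before reaching the statement. Your argument bypasses all of this: you first recognize that the subtracted set is exactly the part of $\bigcup_{a\ge 0}\tilde{f}_i^a(C(\pi,v))\setminus\{0\}$ that escapes $\mathcal{B}_{s_i v}(\lambda)\otimes\mathcal{B}_w(\mu)$, reducing the theorem to showing that $C(\pi,s_i v)$ equals the intersection of these two sets, and then establish both inclusions using only the tensor $i$-string formula, Lemma~\ref{f(Dem)}, the string property (Lemma~\ref{string}) for the \emph{factor} Demazure crystals $\mathcal{B}_v(\lambda)$ and $\mathcal{B}_w(\mu)$, closure under $\tilde{e}_i$, and the disjointness of the Littlewood--Richardson decomposition. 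What the paper's approach buys is the explicit comparison with $D(\pi,\text{\boldmath $i$})$, which feeds into other results in Section~\ref{recursion}; what your approach buys is a self-contained argument that does not require the theory of generalized Demazure crystals from \cite{Magyar}.
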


From now on, we fix $\lambda, \mu \in P^+$, and $w \in W$. Also, we fix a reduced expression $v = s_{i_1} \cdots s_{i_l}$ for $v$, and set $\text{\boldmath $i$} := (i_1, \ldots, i_l)$. First, we introduce some notation.
\begin{defn}
Let $\pi \in \mathcal{B}_w(\mu)^\lambda$. We take $i \in I$ such that $\ell(s_i v) > \ell(v)$. We set
\begin{align*}
& \ E(\pi, v, i)\\
 := \ & \left\{ \tilde{f}_{i}^{\langle \wt(\pi_1), \alpha_i^\vee \rangle} (\pi_1) \otimes \tilde{f}_{i}^b(\pi_2) \left| \begin{array}{l}\pi_1 \otimes \pi_2 \in C(\pi, v), \\ \tilde{e}_{i}(\pi_1) = 0, \ \tilde{f}_{i}(\pi_2) \not\in \mathcal{B}_w(\mu) \sqcup \{ 0 \}, \\ 1 \leq b \leq \langle \wt(\pi_2), \alpha_i^\vee \rangle. \end{array} \right. \right\}.
\end{align*}
\end{defn}

As the first step of the proof of Theorem \ref{main_rec}, we consider the string decomposition of $C(\pi, v)$, $C(\pi, s_i v)$, $D(\pi, \text{\boldmath $i$})$, and $D(\pi, (i, \text{\boldmath $i$}))$, where $(i, \text{\boldmath $i$}) := (i, i_1, \dots, i_l)$.

We take $\pi_1 \otimes \pi_2 \in C(\pi, v)$ such that $\tilde{e}_i(\pi_1 \otimes \pi_2) = 0$, and set 
\begin{align*}
S &:= \{ \tilde{f}_i^a(\pi_1 \otimes \pi_2) \ | \ a \geq 0 \} \cap C(\pi, v), \\
\widetilde{S} &:= \{ \tilde{f}_i^a(\pi_1 \otimes \pi_2) \ | \ a \geq 0 \} \cap D(\pi, \text{\boldmath $i$}), \\
S[i] &:= \{ \tilde{f}_i^a(\pi_1 \otimes \pi_2) \ | \ a \geq 0 \} \cap C(\pi, s_i v), \\
\widetilde{S}[i] &:= \{ \tilde{f}_i^a(\pi_1 \otimes \pi_2) \ | \ a \geq 0 \} \setminus \{ 0 \} \subset D(\pi, (i, \text{\boldmath $i$})).
\end{align*}
Note that $\widetilde{S}[i] = \{ \tilde{f}_i^a(\pi_1 \otimes \pi_2) \ | \ a \geq 0 \} \cap D(\pi, (i, \text{\boldmath $i$}))$.

By the string property for $D(\pi, \text{\boldmath $i$})$ and $D(\pi, (i, \text{\boldmath $i$}))$, for the relations among $S$, $\widetilde{S}$, $S[i]$, and $\widetilde{S}[i]$, there are only the following four cases:
\begin{enumerate}[{(}1{)}]
\item $S = \widetilde{S} = \widetilde{S}[i]$;
\item $S = \{ \pi_1 \otimes \pi_2 \} \subsetneq \widetilde{S} = \widetilde{S}[i]$;
\item $\{ \pi_1 \otimes \pi_2 \} \subsetneq S \subsetneq \widetilde{S} = \widetilde{S}[i]$;
\item $S = \widetilde{S} = \{ \pi_1 \otimes \pi_2 \}$.
\end{enumerate}

\begin{lem}
If $S \subsetneq S[i]$, then $S = \widetilde{S} = \{ \pi_1 \otimes \pi_2 \}$.
\end{lem}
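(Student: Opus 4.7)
The plan is to assume $S \subsetneq S[i]$ and track an element $x \in S[i] \setminus S$. Writing $x = \tilde{f}_i^a(\pi_1 \otimes \pi_2)$ and decomposing via the tensor product rule as $x = \tilde{f}_i^c(\pi_1) \otimes \tilde{f}_i^d(\pi_2)$ for suitable $c, d \ge 0$, the definitions of $S$ and $S[i]$ force $\tilde{f}_i^d(\pi_2) \in \mathcal{B}_w(\mu)$ and $\tilde{f}_i^c(\pi_1) \in \mathcal{B}_{s_i v}(\lambda) \setminus \mathcal{B}_v(\lambda)$ (the containment in $\mathcal{B}_{s_iv}(\lambda)$ being automatic by Lemma~\ref{f(Dem)}(1) since $\pi_1 \in \mathcal{B}_v(\lambda)$). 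From $\tilde{e}_i(\pi_1 \otimes \pi_2) = 0$ and the tensor product rule for $\tilde{e}_i$, one first deduces $\tilde{e}_i(\pi_1) = 0$ and $\varphi_i(\pi_1) \ge \varepsilon_i(\pi_2)$. Applying Lemma~\ref{string} to the $i$-string of $\pi_1$ in $\mathcal{B}_v(\lambda)$, whose top is $\pi_1$, the existence of some $c \ge 1$ with $\tilde{f}_i^c(\pi_1) \notin \mathcal{B}_v(\lambda)$ forces $\tilde{f}_i(\pi_1) \notin \mathcal{B}_v(\lambda)$.

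Next I would rule out $\varphi_i(\pi_1) = \varepsilon_i(\pi_2)$. In that situation the tensor product rule gives $\tilde{f}_i^a(\pi_1 \otimes \pi_2) = \pi_1 \otimes \tilde{f}_i^a(\pi_2)$ for every $a \le \varphi_i(\pi_1 \otimes \pi_2)$, so membership in either $\mathcal{B}_v(\lambda) \otimes \mathcal{B}_w(\mu)$ or $\mathcal{B}_{s_iv}(\lambda) \otimes \mathcal{B}_w(\mu)$ collapses to the single condition $\tilde{f}_i^a(\pi_2) \in \mathcal{B}_w(\mu)$, yielding $S = S[i]$ against our hypothesis. Hence $\varphi_i(\pi_1) > \varepsilon_i(\pi_2)$, which gives $\tilde{f}_i(\pi_1 \otimes \pi_2) = \tilde{f}_i(\pi_1) \otimes \pi_2 \notin \mathcal{B}_v(\lambda) \otimes \mathcal{B}_w(\mu)$, so $S = \{\pi_1 \otimes \pi_2\}$.

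It remains to show $\widetilde{S} = \{\pi_1 \otimes \pi_2\}$. The key technical input is the inclusion $\mathcal{B}_{\text{\boldmath $i$}, w, \lambda, \mu} \subset \mathcal{B}_v(\lambda) \otimes \mathcal{B}(\mu)$: by iterating the tensor product rule, every nonzero element $\tilde{f}_{i_1}^{a_1} \cdots \tilde{f}_{i_l}^{a_l}(b_\lambda \otimes b)$ factors as
\[
\bigl(\tilde{f}_{i_1}^{c_1} \cdots \tilde{f}_{i_l}^{c_l}(b_\lambda)\bigr) \otimes \bigl(\tilde{f}_{i_1}^{d_1} \cdots \tilde{f}_{i_l}^{d_l}(b)\bigr)
\]
with nonzero left factor, and that left factor lies in $\mathcal{B}_v(\lambda)$ by the very definition of the Demazure crystal from the reduced word $(i_1, \ldots, i_l)$. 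Now by Lemma~\ref{string} applied to the Demazure crystal isomorphic to $D(\pi, \text{\boldmath $i$})$ via Remark~\ref{rem_for_genDemcrys}(2), the only possibilities are $\widetilde{S} = \{\pi_1 \otimes \pi_2\}$ or $\widetilde{S} =$ full $i$-string. In the latter case, $\tilde{f}_i(\pi_1) \otimes \pi_2 = \tilde{f}_i(\pi_1 \otimes \pi_2) \in D(\pi, \text{\boldmath $i$}) \subset \mathcal{B}_{\text{\boldmath $i$}, w, \lambda, \mu} \subset \mathcal{B}_v(\lambda) \otimes \mathcal{B}(\mu)$, forcing $\tilde{f}_i(\pi_1) \in \mathcal{B}_v(\lambda)$ in contradiction with the first paragraph. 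Hence $\widetilde{S} = \{\pi_1 \otimes \pi_2\}$, and combining with $S \subset \widetilde{S}$ completes the proof.

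The main obstacle I anticipate is carefully establishing the inclusion $\mathcal{B}_{\text{\boldmath $i$}, w, \lambda, \mu} \subset \mathcal{B}_v(\lambda) \otimes \mathcal{B}(\mu)$: one must split each successive $\tilde{f}_{i_k}$-power into a left and right contribution and check that nonvanishing of the full product forces both tensor factors to remain nonzero so that the Demazure description applies. Once this containment is in hand, the rest is a routine case analysis governed by the tensor product rule, the string property, and the elementary comparison $\mathcal{B}_v(\lambda) \subset \mathcal{B}_{s_i v}(\lambda)$ from Lemma~\ref{f(Dem)}.
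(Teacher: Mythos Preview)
Your proof is correct, and it rests on the same technical input as the paper's argument: the inclusion $\mathcal{B}_{\text{\boldmath $i$}, w, \lambda, \mu} \subset \mathcal{B}_v(\lambda) \otimes \mathcal{B}(\mu)$, obtained exactly as you say by iterating the tensor product rule on $\tilde{f}_{i_1}^{a_1}\cdots\tilde{f}_{i_l}^{a_l}(b_\lambda \otimes b)$. The organization, however, is genuinely different. The paper proceeds contrapositively through the four-case trichotomy listed just before the lemma: it shows that in case~(1) the equality $S=S[i]$ is immediate, and that in cases~(2) and~(3) (i.e.\ whenever $S \subsetneq \widetilde{S}$) any element $\pi_a \otimes \pi_b \in \widetilde{S}\setminus S$ has $\pi_a \in \mathcal{B}_v(\lambda)$ by the key inclusion, and if it were also in $S[i]$ then $\pi_b \in \mathcal{B}_w(\mu)$, forcing $\pi_a\otimes\pi_b \in C(\pi,v)$ and hence into $S$, a contradiction; thus $S[i]\subset S$, so again $S = S[i]$. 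Only case~(4) remains, which is precisely the conclusion.

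Your argument instead works forward from $S \subsetneq S[i]$ and separately pins down $S$ and $\widetilde{S}$: you first use the string property for $\mathcal{B}_v(\lambda)$ together with the tensor product rule (the split into $\varphi_i(\pi_1)=\varepsilon_i(\pi_2)$ versus $>$) to show $S=\{\pi_1\otimes\pi_2\}$, and then invoke the string property for $D(\pi,\text{\boldmath $i$})$ plus the key inclusion to rule out $\widetilde{S}$ being the full string. This is a bit longer, since you do explicit casework on $\varphi_i(\pi_1)$ versus $\varepsilon_i(\pi_2)$ that the paper's contrapositive sidesteps entirely, but it has the virtue of not relying on the pre-established four-case list and of making the role of Lemma~\ref{string} on $\mathcal{B}_v(\lambda)$ transparent.
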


\begin{proof}
First, observe that if $S = \widetilde{S} = \widetilde{S}[i]$, then $S = S[i]$ since $ S[i] \subset \bigcup_{n \geq 0} \tilde{f}_i^n(S) \setminus \{ 0 \} = \widetilde{S}[i] = S \subset S[i]$.

Now, suppose, for a contradiction, that $S \subsetneq \widetilde{S}$; then, we are in the case (2) or (3) above. In this case, there exists an element $\pi_a \otimes \pi_b \in \widetilde{S} \setminus S$. Since $\widetilde{S} \subset \mathcal{B}_{\text{\boldmath $i$}, \lambda, w\mu} = \bigcup_{a_1, \ldots, a_r \geq 0} \tilde{f}_{i_1}^{a_1} \cdots \tilde{f}_{i_r}^{a_r} (\mathcal{B}_e(\lambda) \otimes \mathcal{B}_w(\mu) ) \setminus \{ 0 \}$, the path $\pi_a$ is contained in $\mathcal{B}_v(\lambda) = \{ \tilde{f}_{i_1}^{a_1} \cdots \tilde{f}_{i_r}^{a_r} (\pi^\lambda) \ | \ a_1, \ldots, a_r \geq 0 \} \setminus \{ 0 \}$. We assume that $\pi_a \otimes \pi_b \in S[i]$. Then the element $\pi_a \otimes \pi_b$ is contained in $\mathcal{B}_{s_i v}(\lambda) \otimes \mathcal{B}_w(\mu)$ since $S[i]$ is the subset of $\mathcal{B}_{s_i v}(\lambda) \otimes \mathcal{B}_w(\mu)$. Hence the path $\pi_b$ is contained in $\mathcal{B}_w(\mu)$. Therefore, $\pi_a \otimes \pi_b \in \mathcal{B}_v(\lambda) \otimes \mathcal{B}_w(\mu)$ which implies that $\pi_a \otimes \pi_b \in C(\pi, v)$. This contradicts the assumption that $\pi_a \otimes \pi_b \not\in S$. Thus $\pi_a \otimes \pi_b \not\in S[i]$, and hence $\widetilde{S} \setminus S \subset \widetilde{S} \setminus S[i]$. Therefore, $S[i] \subset S$, and hence $S = S[i]$.
\end{proof}

As a corollary, we obtain the following inclusion relation.

\begin{cor}
The following inclusion holds.
\begin{align*}
C(\pi, s_i v) \subset C(\pi, v) \cup \left\{  \tilde{f}_i^a (\pi_1 \otimes \pi_2 ) \left| \begin{array}{l}\pi_1 \otimes \pi_2 \in C(\pi, v), \\ \tilde{e}_i(\pi_1 \otimes \pi_2) = 0, \\ \tilde{f}_i^a(\pi_1 \otimes \pi_2) \not\in D(\pi, \text{\boldmath $i$})\sqcup \{ 0 \}, \\ a \geq 0. \end{array} \right. \right\} \setminus \{ 0 \}.
\end{align*}
\end{cor}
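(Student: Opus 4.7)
The plan is, given $x \in C(\pi, s_i v)$, to locate the highest element $y$ of its $i$-string, to show that $y$ already belongs to $C(\pi, v)$, and then to invoke the preceding lemma on the $i$-string through $y$ to place $x$ in one of the two sets on the right-hand side.

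First, set $y := \tilde{e}_i^{\max}(x)$. Because each Demazure crystal is stable under $\tilde{e}_i$ and the tensor product rule confines $\tilde{e}_i$ to one factor at a time, the set $\mathcal{B}_{s_i v}(\lambda) \otimes \mathcal{B}_w(\mu)$ is closed under $\tilde{e}_i$; moreover $y$ is connected to $x$ inside this set, so $y \in C(\pi, s_i v)$, $\tilde{e}_i(y) = 0$, and $x = \tilde{f}_i^a(y)$ for some $a \geq 0$. I next claim $y \in C(\pi, v)$. Write $y = \pi_1 \otimes \pi_2$. The tensor product rule combined with $\tilde{e}_i(y) = 0$ forces $\tilde{e}_i(\pi_1) = 0$. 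Since $\ell(s_i v) > \ell(v)$, Lemma \ref{f(Dem)}(1) gives $\mathcal{B}_{s_i v}(\lambda) = \bigcup_{a \geq 0} \tilde{f}_i^a(\mathcal{B}_v(\lambda)) \setminus \{0\}$, and any element of $\mathcal{B}_{s_i v}(\lambda)$ killed by $\tilde{e}_i$ must correspond to $a = 0$ and so already lies in $\mathcal{B}_v(\lambda)$. Hence $\pi_1 \in \mathcal{B}_v(\lambda)$, so $y \in \mathcal{B}_v(\lambda) \otimes \mathcal{B}_w(\mu) = \bigsqcup_{\pi' \in \mathcal{B}_w(\mu)^\lambda} C(\pi', v)$. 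Noting that $\mathcal{B}_v(\lambda) \otimes \mathcal{B}_w(\mu) \subset \mathcal{B}_{s_i v}(\lambda) \otimes \mathcal{B}_w(\mu)$ and that $C(\pi', v)$ is connected and contains $\pi^{\prime\lambda} \otimes \pi'$, we have $C(\pi', v) \subset C(\pi', s_i v)$; as the components $C(\pi', s_i v)$ are pairwise disjoint and $y \in C(\pi, s_i v)$, we conclude $y \in C(\pi, v)$.

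With $y \in C(\pi, v)$ and $\tilde{e}_i(y) = 0$, the preceding lemma applies to $\pi_1 \otimes \pi_2 := y$. If $x \in C(\pi, v)$ the inclusion is trivial. Otherwise $x \in S[i] \setminus S$, so $S \subsetneq S[i]$, and the lemma forces $S = \widetilde{S} = \{y\}$. Since $x \neq y$, this yields $x \notin D(\pi, \text{\boldmath $i$})$, and combined with $x \neq 0$ and the established properties of $y$, this places $x = \tilde{f}_i^a(y)$ in the second set on the right-hand side.

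The only delicate point is the identification of the connected component containing $y$: one must rule out the possibility that $y$ lies in some $C(\pi', v)$ with $\pi' \neq \pi$. This is handled precisely by the chain $C(\pi', v) \subset C(\pi', s_i v)$ together with the disjointness of distinct components $C(\pi', s_i v)$, which together pin $y$ down to $C(\pi, v)$. Every other step is a direct application either of the tensor product rule or of the preceding lemma.
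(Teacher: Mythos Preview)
Your proof is correct and follows the same route as the paper, which states this corollary without proof as an immediate consequence of the preceding lemma and the $i$-string setup. You make explicit the one point the paper leaves implicit: that every $i$-highest element of $C(\pi, s_i v)$ already lies in $C(\pi, v)$, which you establish via $\mathcal{B}_{s_i v}(\lambda) = \bigcup_{a \geq 0} \tilde{f}_i^a(\mathcal{B}_v(\lambda)) \setminus \{0\}$ together with the disjointness of the components $C(\pi', s_i v)$.
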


By the tensor product rule, we can simplify one condition in the above.

\begin{lem}\label{rec_lem1}
Let $\pi_1 \otimes \pi_2 \in \mathcal{B}(\lambda) \otimes \mathcal{B}(\mu)$. Then the following statements are equivalent:

\begin{enumerate}[{(}1{)}]
\item $\tilde{e}_i(\pi_1 \otimes \pi_2) = 0$;
\item $\tilde{e}_i(\pi_1) = 0$ and $\tilde{e}_i^{\langle \wt(\pi_1), \alpha_i^\vee \rangle + 1} (\pi_2) = 0$.
\end{enumerate}
\end{lem}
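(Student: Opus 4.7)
The plan is to apply the tensor product rule for $\tilde{e}_i$ directly, rewriting the conditions on $\varphi_i(\pi_1)$ and $\varepsilon_i(\pi_2)$ in terms of the quantities that appear in (2). The key identity is that for any $b$ in a highest weight crystal, $\tilde{e}_i(b) = 0$ is equivalent to $\varepsilon_i(b) = 0$, and combined with the axiom $\varphi_i(b) = \varepsilon_i(b) + \langle \wt(b), \alpha_i^\vee \rangle$ this gives $\varphi_i(b) = \langle \wt(b), \alpha_i^\vee \rangle$ whenever $\tilde{e}_i(b) = 0$.

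For the direction $(1) \Rightarrow (2)$, I would start from the tensor product rule
\begin{align*}
\tilde{e}_i(\pi_1 \otimes \pi_2) = \begin{cases} \tilde{e}_i(\pi_1) \otimes \pi_2 & (\varphi_i(\pi_1) \geq \varepsilon_i(\pi_2)), \\ \pi_1 \otimes \tilde{e}_i(\pi_2) & (\varphi_i(\pi_1) < \varepsilon_i(\pi_2)), \end{cases}
\end{align*}
and split into the two cases. In the second case, $\tilde{e}_i(\pi_1 \otimes \pi_2) = 0$ forces $\tilde{e}_i(\pi_2) = 0$, hence $\varepsilon_i(\pi_2) = 0$; but then $\varphi_i(\pi_1) < 0$, which is impossible in $\mathcal{B}(\lambda)$ since $\varphi_i$ takes nonnegative values there. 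So we must be in the first case, which gives $\tilde{e}_i(\pi_1) = 0$ and $\varphi_i(\pi_1) \geq \varepsilon_i(\pi_2)$. Combining $\tilde{e}_i(\pi_1) = 0$ with the axiom above yields $\varphi_i(\pi_1) = \langle \wt(\pi_1), \alpha_i^\vee \rangle$, and the inequality $\varepsilon_i(\pi_2) \leq \langle \wt(\pi_1), \alpha_i^\vee \rangle$ is precisely $\tilde{e}_i^{\langle \wt(\pi_1), \alpha_i^\vee \rangle + 1}(\pi_2) = 0$.

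For the reverse direction, assuming $\tilde{e}_i(\pi_1) = 0$ gives $\varphi_i(\pi_1) = \langle \wt(\pi_1), \alpha_i^\vee \rangle$ as above, and $\tilde{e}_i^{\langle \wt(\pi_1), \alpha_i^\vee \rangle + 1}(\pi_2) = 0$ rewrites as $\varepsilon_i(\pi_2) \leq \langle \wt(\pi_1), \alpha_i^\vee \rangle = \varphi_i(\pi_1)$. The first branch of the tensor product rule therefore applies and gives $\tilde{e}_i(\pi_1 \otimes \pi_2) = \tilde{e}_i(\pi_1) \otimes \pi_2 = 0$.

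There is no real obstacle here; the whole argument is a one-line unpacking of the tensor product rule together with the $\varphi_i = \varepsilon_i + \langle \wt, \alpha_i^\vee \rangle$ identity. The only subtle point to watch is to rule out the second branch of the rule, but that is immediate from the nonnegativity of $\varphi_i$ on $\mathcal{B}(\lambda)$.
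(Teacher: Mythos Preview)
Your proof is correct and follows essentially the same route as the paper's own proof: the paper writes the argument as a chain of equivalences $\tilde{e}_i(\pi_1 \otimes \pi_2) = 0 \Leftrightarrow \varphi_i(\pi_1) \geq \varepsilon_i(\pi_2) \text{ and } \tilde{e}_i(\pi_1) = 0 \Leftrightarrow \tilde{e}_i(\pi_1) = 0 \text{ and } \varepsilon_i(\pi_2) \leq \langle \wt(\pi_1), \alpha_i^\vee \rangle \Leftrightarrow (2)$, which is exactly your tensor-product-rule unpacking. Your version is slightly more detailed in that you explicitly rule out the second branch of the tensor product rule via nonnegativity of $\varphi_i$, whereas the paper leaves that step implicit.
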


\begin{proof}
This is because the following are equivalent:
\begin{align*}
& \tilde{e}_i(\pi_1 \otimes \pi_2) = 0 \\
\Longleftrightarrow \ & \varphi_i(\pi_1) \geq \varepsilon_i(\pi_2) \text{ and } \tilde{e}_i(\pi_1) = 0 \\
\Longleftrightarrow \ & \tilde{e}_i(\pi_1) = 0 \text{ and } \varepsilon_i(\pi_2) \leq \langle \wt(\pi_1), \alpha_i^\vee \rangle \\
\Longleftrightarrow \ & \tilde{e}_i(\pi_1) = 0 \text{ and } \tilde{e}_i^{\langle \wt(\pi_1), \alpha_i^\vee \rangle + 1}(\pi_2) = 0.
\end{align*}
\end{proof}

\begin{cor}\label{rec_step1}
The following inclusion holds.
\begin{align*}
& \ C(\pi, s_i v) \\ \subset & \ C(\pi, v) \cup \left\{  \tilde{f}_i^a (\pi_1 \otimes \pi_2 ) \left| \begin{array}{l}\pi_1 \otimes \pi_2 \in C(\pi, v), \\ \tilde{e}_i(\pi_1) = 0, \ \tilde{e}_i^{\langle \wt(\pi_1), \alpha_i^\vee \rangle + 1} (\pi_2) = 0, \\ \tilde{f}_i(\pi_1 \otimes \pi_2) \not\in D(\pi, \text{\boldmath $i$})\sqcup \{ 0 \}, \\ a \geq 0. \end{array} \right. \right\} \setminus \{ 0 \}.
\end{align*}
\end{cor}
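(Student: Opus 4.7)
The plan is to obtain Corollary \ref{rec_step1} directly from the preceding corollary by simplifying the two defining conditions of the extra set attached to $C(\pi, v)$. The condition $\tilde{e}_i(\pi_1 \otimes \pi_2) = 0$ can be substituted verbatim by its equivalent form $\tilde{e}_i(\pi_1) = 0$ together with $\tilde{e}_i^{\langle \wt(\pi_1), \alpha_i^\vee \rangle + 1}(\pi_2) = 0$, which is exactly Lemma \ref{rec_lem1}. All the content therefore lies in establishing that the existence statement ``$\tilde{f}_i^a(\pi_1 \otimes \pi_2) \not\in D(\pi, \text{\boldmath $i$}) \sqcup \{0\}$ for some $a \geq 0$'' is equivalent to its single instance ``$\tilde{f}_i(\pi_1 \otimes \pi_2) \not\in D(\pi, \text{\boldmath $i$}) \sqcup \{0\}$''.

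To prove this equivalence, fix $\pi_1 \otimes \pi_2 \in C(\pi, v)$ with $\tilde{e}_i(\pi_1 \otimes \pi_2) = 0$, and consider the $i$-string $S^\ast := \{\tilde{f}_i^n(\pi_1 \otimes \pi_2) \mid n \geq 0\} \setminus \{0\}$ based at $\pi_1 \otimes \pi_2$. By Remark \ref{rem_for_genDemcrys}(2), $D(\pi, \text{\boldmath $i$})$ is isomorphic to a Demazure crystal, so Lemma \ref{string} forces $S^\ast \cap D(\pi, \text{\boldmath $i$})$ to equal $\emptyset$, $\{\pi_1 \otimes \pi_2\}$, or $S^\ast$. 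The first option is ruled out by the inclusion $C(\pi, v) \subset D(\pi, \text{\boldmath $i$})$ from Remark \ref{rem_for_genDemcrys}(3). If some $a \geq 0$ satisfies $\tilde{f}_i^a(\pi_1 \otimes \pi_2) \not\in D(\pi, \text{\boldmath $i$}) \sqcup \{0\}$, then the third option is also excluded, and moreover $\tilde{f}_i(\pi_1 \otimes \pi_2) \not= 0$ (otherwise $S^\ast = \{\pi_1 \otimes \pi_2\}$ would lie inside $D(\pi, \text{\boldmath $i$})$). Hence $S^\ast \cap D(\pi, \text{\boldmath $i$}) = \{\pi_1 \otimes \pi_2\}$, and in particular $\tilde{f}_i(\pi_1 \otimes \pi_2) \not\in D(\pi, \text{\boldmath $i$}) \sqcup \{0\}$; the reverse implication is trivial by setting $a = 1$. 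Combining both simplifications yields Corollary \ref{rec_step1} from the preceding corollary.

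The only conceptual point that requires care is the application of the string property: Lemma \ref{string} is formulated for Demazure crystals sitting inside a highest weight crystal $\mathcal{B}(\lambda)$, whereas $D(\pi, \text{\boldmath $i$})$ sits inside the tensor product $\mathcal{B}(\lambda) \otimes \mathcal{B}(\mu)$. However, $D(\pi, \text{\boldmath $i$})$ is reached from $\pi^\lambda \otimes \pi$ by Kashiwara operators that stay within $\mathcal{B}_{\text{\boldmath $i$}, w, \lambda, \mu}$, so it is contained in the single connected component $C(\pi, \lambda, \mu) \simeq \mathcal{B}(\lambda + \wt(\pi))$ of the tensor product, inside which Remark \ref{rem_for_genDemcrys}(2) identifies it with a genuine Demazure crystal. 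The string property then transfers without issue, so this is not a real obstacle.
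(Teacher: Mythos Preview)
Your argument is correct and matches the paper's approach. The paper states Corollary \ref{rec_step1} without proof, relying on Lemma \ref{rec_lem1} for the rewriting of $\tilde{e}_i(\pi_1\otimes\pi_2)=0$ and on the string property for $D(\pi,\text{\boldmath $i$})$ (already invoked a few lines earlier when the four cases for $S,\widetilde{S},S[i],\widetilde{S}[i]$ are listed) to pass from the condition $\tilde{f}_i^a(\pi_1\otimes\pi_2)\not\in D(\pi,\text{\boldmath $i$})\sqcup\{0\}$ to $\tilde{f}_i(\pi_1\otimes\pi_2)\not\in D(\pi,\text{\boldmath $i$})\sqcup\{0\}$; you simply spell out this second step explicitly, and your care about transporting Lemma \ref{string} through the isomorphism $C(\pi,\lambda,\mu)\simeq\mathcal{B}(\lambda+\wt(\pi))$ is exactly the point the paper takes for granted via Remark \ref{rem_for_genDemcrys}(2).
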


\begin{defn}
We set
\begin{align*}
\varDelta^\prime (\pi, v, \text{\boldmath $i$}, i) := \left\{ \tilde{f}_{i}^{a} (\pi_1 \otimes \pi_2) \ \left| \begin{array}{l}\pi_1 \otimes \pi_2 \in C(\pi, v), \\ \tilde{e}_{i}(\pi_1) = 0 , \ \tilde{e}_{i}^{\langle \wt (\pi_1), \alpha_i^\vee \rangle + 1} (\pi_2) = 0, \\ \tilde{f}_i(\pi_1 \otimes \pi_2) \not\in D(\pi, \text{\boldmath $i$}) \sqcup \{ 0 \}, \\ a \geq 0.\end{array} \right. \right\} \setminus \{ 0 \}.
\end{align*}
\end{defn}

As the second step of the proof of Theorem \ref{main_rec}, we remove some unnecessary conditions from the right-hand side of the inclusion in Corollary \ref{rec_step1}.

\begin{prop}\label{rec_step2}
The following equality holds.
\begin{align*}
& \ C(\pi, s_i v) \\
=& \ (C(\pi, v) \cup \varDelta^\prime (\pi, v, \text{\boldmath $i$}, i)) \\
& \ \setminus \left\{ \tilde{f}_{i}^{\max\{ \varphi_i(\pi_1) - \varepsilon_i(\pi_2), 0 \}} (\pi_1) \otimes \tilde{f}_{i}^b(\pi_2) \left| \begin{array}{l}\pi_1 \otimes \pi_2 \in C(\pi, v), \\ \tilde{e}_{i}(\pi_1 \otimes \pi_2) = 0, \\ \tilde{f}_{i}(\pi_2) \not\in \mathcal{B}_w(\mu) \sqcup \{ 0 \}, \\ 1 \leq b \leq \varphi_i(\pi_2). \end{array} \right. \right\}.
\end{align*}
\end{prop}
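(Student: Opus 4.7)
The plan is to refine the inclusion $C(\pi, s_i v) \subset C(\pi, v) \cup \varDelta'(\pi, v, \text{\boldmath $i$}, i)$ given by Corollary~\ref{rec_step1}, by pinpointing which elements of the right-hand side fail to lie in $C(\pi, s_i v)$. Since $C(\pi, s_i v) \subset \mathcal{B}_{s_i v}(\lambda) \otimes \mathcal{B}_w(\mu)$, an element produced from some $\pi_1 \otimes \pi_2 \in C(\pi, v)$ by iterated $\tilde{f}_i$ is excluded exactly when the second tensor factor is pushed out of $\mathcal{B}_w(\mu)$.

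Fix $\pi_1 \otimes \pi_2 \in C(\pi, v)$ with $\tilde{e}_i(\pi_1 \otimes \pi_2) = 0$. By Lemma~\ref{rec_lem1} one has $\tilde{e}_i(\pi_1) = 0$ and $\varepsilon_i(\pi_2) \leq \varphi_i(\pi_1)$, so $p := \max\{\varphi_i(\pi_1) - \varepsilon_i(\pi_2), 0\} = \varphi_i(\pi_1) - \varepsilon_i(\pi_2)$. The tensor product rule then yields
\[
\tilde{f}_i^a(\pi_1 \otimes \pi_2) = \begin{cases} \tilde{f}_i^a(\pi_1) \otimes \pi_2, & 0 \leq a \leq p, \\ \tilde{f}_i^p(\pi_1) \otimes \tilde{f}_i^{a-p}(\pi_2), & a > p, \end{cases}
\]
and Lemma~\ref{f(Dem)}(1) guarantees that the first tensor factor remains in $\mathcal{B}_{s_i v}(\lambda)$. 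For $a \leq p$ the second factor is $\pi_2 \in \mathcal{B}_w(\mu)$, so the resulting element lies in $\mathcal{B}_{s_i v}(\lambda) \otimes \mathcal{B}_w(\mu)$ and is $\tilde{e}_i$-connected to $\pi_1 \otimes \pi_2 \in C(\pi, v) \subset C(\pi, s_i v)$; hence it belongs to $C(\pi, s_i v)$.

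For $a = p + b$ with $1 \leq b \leq \varphi_i(\pi_2)$, I would apply the string property (Lemma~\ref{string}) to the $i$-string $S$ in $\mathcal{B}(\mu)$ containing $\pi_2$. If $\tilde{f}_i(\pi_2) \in \mathcal{B}_w(\mu)$, then $\pi_2$ and $\tilde{f}_i(\pi_2)$ are two distinct elements of $S \cap \mathcal{B}_w(\mu)$, which forces $S \subset \mathcal{B}_w(\mu) \sqcup \{0\}$; thus $\tilde{f}_i^b(\pi_2) \in \mathcal{B}_w(\mu) \sqcup \{0\}$ and the connectivity argument above again places the element in $C(\pi, s_i v)$. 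If instead $\tilde{f}_i(\pi_2) \notin \mathcal{B}_w(\mu) \sqcup \{0\}$, then $\tilde{e}_i$-stability of $\mathcal{B}_w(\mu)$ forces $\tilde{e}_i(\pi_2) = 0$, so $\pi_2$ is the top of $S$ and Lemma~\ref{string} gives $S \cap \mathcal{B}_w(\mu) = \{\pi_2\}$; consequently $\tilde{f}_i^b(\pi_2) \notin \mathcal{B}_w(\mu)$ for every such $b$, and $\tilde{f}_i^p(\pi_1) \otimes \tilde{f}_i^b(\pi_2)$ lies outside $\mathcal{B}_{s_i v}(\lambda) \otimes \mathcal{B}_w(\mu)$, hence outside $C(\pi, s_i v)$. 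These excluded elements are exactly the ones appearing in the subtracted set of the proposition, so combining both directions gives the equality. The main technical obstacle is the careful case analysis in the tensor product rule combined with the string-property dichotomy; no further input beyond the quoted lemmas and Corollary~\ref{rec_step1} is required.
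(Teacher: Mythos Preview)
Your argument is correct and follows the same route as the paper: you analyze each $i$-string with top $\pi_1\otimes\pi_2\in C(\pi,v)$, split according to whether $\tilde f_i$ acts on the first or second factor via the tensor product rule, and then invoke the string property (Lemma~\ref{string}) to decide when the second factor leaves $\mathcal{B}_w(\mu)$. The only cosmetic difference is that the paper restricts from the outset to those $\pi_1\otimes\pi_2$ satisfying the extra $\varDelta'$ condition $\tilde f_i(\pi_1\otimes\pi_2)\notin D(\pi,\text{\boldmath $i$})\sqcup\{0\}$, whereas you treat all $i$-highest elements uniformly; your phrase ``$\tilde e_i$-stability of $\mathcal{B}_w(\mu)$ forces $\tilde e_i(\pi_2)=0$'' should really read ``the string property (Lemma~\ref{string}) forces $\pi_2$ to be the top of $S$,'' but the conclusion and the overall proof are the same.
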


\begin{defn}
We set
\begin{align*}
E^\prime (\pi, v, i) := \left\{ \tilde{f}_{i}^{\max\{ \varphi_i(\pi_1) - \varepsilon_i(\pi_2), 0 \}} (\pi_1) \otimes \tilde{f}_{i}^b(\pi_2) \left| \begin{array}{l}\pi_1 \otimes \pi_2 \in C(\pi, v), \\ \tilde{e}_{i}(\pi_1 \otimes \pi_2) = 0, \\ \tilde{f}_{i}(\pi_2) \not\in \mathcal{B}_w(\mu) \sqcup \{ 0 \}, \\ 1 \leq b \leq \varphi_i(\pi_2). \end{array} \right. \right\}.
\end{align*}
\end{defn}

\begin{proof}[Proof of Proposition \ref{rec_step2}]
Since $C(\pi, v) \subset C(\pi, s_i v)$, it is sufficient to find all those elements in $\varDelta^\prime (\pi, v, \text{\boldmath $i$}, i)$ which do not belong to $C(\pi, s_i v)$. To find these elements, we need to find all these $\pi_1 \otimes \pi_2 \in C(\pi, v)$ and $a \geq 0$ which satisfy the following conditions:
\begin{enumerate}[{(}1{)}]
\item $\tilde{e}_i(\pi_1 \otimes \pi_2) = 0$;
\item $\tilde{f}_i(\pi_1 \otimes \pi_2) \not\in D(\pi, \text{\boldmath $i$}) \sqcup \{ 0 \}$;
\item $\tilde{f}_i^a(\pi_1 \otimes \pi_2) \not\in C(\pi, s_i v) \sqcup \{ 0 \}$.
\end{enumerate}

Take an element $\pi_1 \otimes \pi_2 \in C(\pi, v)$ which satisfies conditions (1) and (2) above. We set $M := \max\{ \varphi_i(\pi_1) - \varepsilon_i(\pi_2), 0 \}$.

If $a$ satisfies $0 \leq a \leq M$, then $\tilde{f}_i^a(\pi_1 \otimes \pi_2)$ is equal to $\tilde{f}_i^a(\pi_1) \otimes \pi_2$ by the tensor product rule. Hence $\tilde{f}_i^a(\pi_1 \otimes \pi_2) \in C(\pi, s_i v)$ since $\tilde{f}_i^a(\pi_1) \in \mathcal{B}_{s_i v}(\lambda)$. If $a$ satisfies $a > M + \varphi_i(\pi_2)$, then
\begin{align*}
\tilde{f}_i^a(\pi_1 \otimes \pi_2) &= \tilde{f}_i^M (\pi_1) \otimes \tilde{f}_i^{a-M} (\pi_2) \\
&= \tilde{f}_i^M (\pi_1) \otimes 0 \\
&= 0
\end{align*}
since $a-M > \varphi_i(\pi_2)$.

From now on, we assume that $M < a \leq M + \varphi_i(\pi_2)$. First we assume that $\tilde{f}_i(\pi_2) \in \mathcal{B}_w(\mu) \sqcup \{ 0 \}$. Then $\tilde{f}_i^{a-M}(\pi_2) \in \mathcal{B}_w(\mu)$ by the string property for $\mathcal{B}_w(\mu)$. Therefore, we have
\begin{align*}
\tilde{f}_i^a(\pi_1 \otimes \pi_2) = \tilde{f}_i^M(\pi_1) \otimes \tilde{f}_i^{a-M}(\pi_2) \in \mathcal{B}_{s_i v}(\lambda) \otimes \mathcal{B}_w(\mu).
\end{align*}
Hence $\tilde{f}_i^a(\pi_1 \otimes \pi_2) \in C(\pi, s_i v)$. Next we assume that $\tilde{f}_i(\pi_2) \not\in \mathcal{B}_w(\mu) \sqcup \{ 0 \}$. Then $\tilde{f}_i^{a-M} (\pi_2)$ does not belong to $\mathcal{B}_w(\mu) \sqcup \{ 0 \}$. Therefore,
\begin{align*}
\tilde{f}_i^a(\pi_1 \otimes \pi_2) = \tilde{f}_i^M(\pi_1) \otimes \tilde{f}_i^{a-M} (\pi_2) \not\in \mathcal{B}_{s_i v}(\lambda) \otimes \mathcal{B}_w(\mu) \sqcup \{ 0 \}.
\end{align*}
Hence $\tilde{f}_i^a(\pi_1 \otimes \pi_2) \not\in C(\pi, s_i v) \sqcup \{ 0 \}$.

From the above discussion, we conclude that the set of all those elements in $\varDelta^\prime (\pi, v, \text{\boldmath $i$}, i)$ which do not belong to $C(\pi, s_i v)$, is identical to $E^\prime(\pi, v, i)$.
\end{proof}

Also, we can simplify the conditions of $E^\prime(\pi, v, i)$.

\begin{lem}\label{rec_lem2}
Take a path $\pi \in \mathcal{B}_w(\mu)$ which satisfies $\tilde{f}_i(\pi) \not\in \mathcal{B}_w(\mu) \sqcup \{ 0 \}$. Then, $\varepsilon_i(\pi) = 0$ and $\varphi_i(\pi) = \langle \wt(\pi), \alpha_i^\vee \rangle$.
\end{lem}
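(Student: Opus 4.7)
The plan is to use the string property of Demazure crystals (Lemma \ref{string}) together with the basic crystal relation $\varphi_i(b) = \varepsilon_i(b) + \langle \wt(b), \alpha_i^\vee \rangle$. The whole lemma reduces to showing $\varepsilon_i(\pi) = 0$; once that is known, the formula for $\varphi_i(\pi)$ is immediate from this crystal axiom.

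To show $\varepsilon_i(\pi) = 0$, I would consider the full $i$-string $S$ through $\pi$ in $\mathcal{B}(\mu)$, and let $b := \tilde{e}_i^{\varepsilon_i(\pi)}(\pi)$ denote its highest weight element. Since $\pi \in S \cap \mathcal{B}_w(\mu)$, this intersection is nonempty, so by Lemma \ref{string} it is either $\{b\}$ or all of $S$. The hypothesis $\tilde{f}_i(\pi) \not\in \mathcal{B}_w(\mu) \sqcup \{0\}$ rules out the second option: if $S \cap \mathcal{B}_w(\mu) = S$, then in particular $\tilde{f}_i(\pi)$ would lie in $\mathcal{B}_w(\mu) \sqcup \{0\}$, contrary to assumption. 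Hence $S \cap \mathcal{B}_w(\mu) = \{b\}$, which forces $\pi = b$, and therefore $\varepsilon_i(\pi) = 0$.

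Combining this with the crystal axiom gives
\begin{align*}
\varphi_i(\pi) = \varepsilon_i(\pi) + \langle \wt(\pi), \alpha_i^\vee \rangle = \langle \wt(\pi), \alpha_i^\vee \rangle,
\end{align*}
which completes the proof. There is no real obstacle here; the only subtle point is to correctly read the hypothesis $\tilde{f}_i(\pi) \not\in \mathcal{B}_w(\mu) \sqcup \{0\}$ as excluding both $\tilde{f}_i(\pi) = 0$ and $\tilde{f}_i(\pi) \in \mathcal{B}_w(\mu)$, which is exactly what is needed to eliminate the case $S \cap \mathcal{B}_w(\mu) = S$ regardless of whether $\pi$ is at the bottom of $S$ or not.
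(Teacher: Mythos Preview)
Your proof is correct and follows essentially the same approach as the paper: both invoke the string property (Lemma~\ref{string}) to conclude that $\pi$ must be the $i$-highest element of its string, hence $\varepsilon_i(\pi)=0$, and then apply the crystal axiom $\varphi_i(\pi)=\varepsilon_i(\pi)+\langle\wt(\pi),\alpha_i^\vee\rangle$. Your version simply spells out the case analysis that the paper compresses into a single sentence.
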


\begin{proof}
By the string property for $\mathcal{B}_w(\mu)$ (see Lemma \ref{string}), the path $\pi$ is an $i$-highest element, that is, $\tilde{e}_i(\pi) = 0$, which implies that $\varepsilon_i(\pi) = 0$. Hence, by an axiom for crystals, we have $\varphi_i(\pi) = \varepsilon_i(\pi) + \langle \wt(\pi), \alpha_i^\vee \rangle = \langle \wt(\pi), \alpha_i^\vee \rangle$.
\end{proof}

\begin{lem}
Let $\pi_1 \otimes \pi_2 \in \mathcal{B}_v(\lambda) \otimes \mathcal{B}_w(\mu)$. If $\tilde{f}_i(\pi_2) \not\in \mathcal{B}_w(\mu) \sqcup \{ 0 \}$, then the following conditions are equivalent:
\begin{enumerate}[{(}1{)}]
\item $\tilde{e}_i(\pi_1 \otimes \pi_2) = 0$;
\item $\tilde{e}_i(\pi_1) = 0$.
\end{enumerate}
\end{lem}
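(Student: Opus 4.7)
The plan is to show the two implications separately, using the preceding two lemmas as the main tools. The direction $(1) \Rightarrow (2)$ is essentially built in: by Lemma \ref{rec_lem1}, the condition $\tilde{e}_i(\pi_1 \otimes \pi_2) = 0$ already decomposes as the conjunction $\tilde{e}_i(\pi_1) = 0$ and $\tilde{e}_i^{\langle \wt(\pi_1), \alpha_i^\vee\rangle + 1}(\pi_2) = 0$, so we simply read off the first conjunct.

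For the direction $(2) \Rightarrow (1)$, I would again invoke Lemma \ref{rec_lem1}, which reduces the task to proving $\tilde{e}_i^{\langle \wt(\pi_1), \alpha_i^\vee\rangle + 1}(\pi_2) = 0$ on top of the assumption $\tilde{e}_i(\pi_1) = 0$. Here Lemma \ref{rec_lem2} is the key input: since $\tilde{f}_i(\pi_2) \not\in \mathcal{B}_w(\mu) \sqcup \{0\}$, we have $\varepsilon_i(\pi_2) = 0$, which means $\tilde{e}_i(\pi_2) = 0$ already. Therefore $\tilde{e}_i^n(\pi_2) = 0$ for every $n \geq 1$.

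It remains to observe that the exponent $\langle \wt(\pi_1), \alpha_i^\vee\rangle + 1$ is indeed at least $1$. Since $\tilde{e}_i(\pi_1) = 0$ gives $\varepsilon_i(\pi_1) = 0$, the crystal axiom $\varphi_i(\pi_1) = \varepsilon_i(\pi_1) + \langle \wt(\pi_1), \alpha_i^\vee\rangle$ forces $\langle \wt(\pi_1), \alpha_i^\vee\rangle = \varphi_i(\pi_1) \geq 0$. Hence $\langle \wt(\pi_1), \alpha_i^\vee\rangle + 1 \geq 1$, and the previous paragraph applies to conclude the proof.

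Nothing in this argument looks like it will be an obstacle; the whole content is simply chaining together Lemma \ref{rec_lem1} (which rewrites the tensor-product condition), Lemma \ref{rec_lem2} (which uses the hypothesis on $\pi_2$ to force $\varepsilon_i(\pi_2) = 0$), and the basic crystal identity relating $\varphi_i$, $\varepsilon_i$, and the weight pairing.
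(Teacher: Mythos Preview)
Your proposal is correct and follows essentially the same approach as the paper: use Lemma~\ref{rec_lem2} to obtain $\tilde{e}_i(\pi_2)=0$, then invoke Lemma~\ref{rec_lem1} so that the equivalence collapses to the single condition $\tilde{e}_i(\pi_1)=0$. Your version is slightly more explicit than the paper's (which compresses the argument into two sentences), in particular in verifying that the exponent $\langle \wt(\pi_1),\alpha_i^\vee\rangle + 1 \geq 1$ when $\tilde{e}_i(\pi_1)=0$.
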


\begin{proof}
By Lemma \ref{rec_lem2}, we have $\tilde{e}_i(\pi_2) = 0$, and hence $\tilde{e}_i^{\langle \wt(\pi_1), \alpha_i^\vee \rangle + 1} (\pi_2) = 0$. Therefore, this lemma follows from Lemma \ref{rec_lem1}.
\end{proof}

\begin{cor}\label{rec_step2_cor}
It holds that
\begin{align*}
C(\pi, s_i v) = (C(\pi, v) \cup \varDelta^\prime (\pi, v, \text{\boldmath $i$}, i)) \setminus E(\pi, v, i).
\end{align*}
\end{cor}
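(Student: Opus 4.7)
The plan is short: this corollary should follow essentially by substitution from Proposition~\ref{rec_step2}, since Proposition~\ref{rec_step2} already gives the desired decomposition but with $E^\prime(\pi, v, \text{\boldmath $i$})$ in place of $E(\pi, v, i)$. Thus the entire task reduces to proving the set-theoretic identity $E^\prime(\pi, v, i) = E(\pi, v, i)$, after which the corollary is immediate.

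To verify this identity I would compare the two defining data term by term. Both sets range over pairs $\pi_1 \otimes \pi_2 \in C(\pi, v)$ satisfying $\tilde{f}_i(\pi_2) \notin \mathcal{B}_w(\mu) \sqcup \{0\}$. Under this hypothesis, Lemma~\ref{rec_lem2} gives $\varepsilon_i(\pi_2) = 0$ and $\varphi_i(\pi_2) = \langle \wt(\pi_2), \alpha_i^\vee \rangle$, so the index range $1 \leq b \leq \varphi_i(\pi_2)$ in $E^\prime$ matches the range $1 \leq b \leq \langle \wt(\pi_2), \alpha_i^\vee \rangle$ in $E$. The equivalence $\tilde{e}_i(\pi_1 \otimes \pi_2) = 0 \iff \tilde{e}_i(\pi_1) = 0$ needed to match the ``$i$-highest'' conditions is exactly the content of the lemma stated immediately before the corollary.

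It then remains to check the exponent on $\pi_1$. Assuming $\tilde{e}_i(\pi_1) = 0$, we have $\varepsilon_i(\pi_1) = 0$, and the crystal axiom $\varphi_i(\pi_1) = \varepsilon_i(\pi_1) + \langle \wt(\pi_1), \alpha_i^\vee \rangle$ yields $\varphi_i(\pi_1) = \langle \wt(\pi_1), \alpha_i^\vee \rangle$; since $\varphi_i(\pi_1) \geq 0$, this quantity is automatically nonnegative. Combined with $\varepsilon_i(\pi_2) = 0$ from Lemma~\ref{rec_lem2}, this gives
\[
\max\{\varphi_i(\pi_1) - \varepsilon_i(\pi_2),\, 0\} = \max\{\langle \wt(\pi_1), \alpha_i^\vee \rangle,\, 0\} = \langle \wt(\pi_1), \alpha_i^\vee \rangle,
\]
so the exponents in the two definitions agree. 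Hence $E^\prime(\pi, v, i) = E(\pi, v, i)$, and the corollary follows from Proposition~\ref{rec_step2}. I do not foresee a real obstacle: the hard combinatorial content sits in Proposition~\ref{rec_step2}, and this corollary is a routine rewriting of the $\varphi_i,\varepsilon_i$ data in terms of weights, enabled by the two preceding lemmas.
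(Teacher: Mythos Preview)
Your proposal is correct and matches exactly the argument the paper intends: the corollary is stated without proof in the paper, immediately after the lemma giving the equivalence $\tilde{e}_i(\pi_1 \otimes \pi_2) = 0 \Leftrightarrow \tilde{e}_i(\pi_1) = 0$ under the hypothesis $\tilde{f}_i(\pi_2) \notin \mathcal{B}_w(\mu) \sqcup \{0\}$, and your term-by-term verification that $E'(\pi,v,i) = E(\pi,v,i)$ via Lemma~\ref{rec_lem2} and that lemma is precisely the implied reasoning. One trivial slip: at the start you write $E^\prime(\pi, v, \text{\boldmath $i$})$ where you mean $E^\prime(\pi, v, i)$, but this does not affect the argument.
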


\begin{thm}\label{rec_step3}
Let $\lambda, \mu$ be dominant integral weights, and $v, w$ elements of the Weyl group $W$. Take $i \in I$ which satisfies $\ell (s_i v) > \ell (v)$. Also, we take $\pi \in \mathcal{B}_w(\mu)^\lambda$. Then we have the following formula.
\begin{align*}
& \ C(\pi, s_i v) \\
=& \ \left( \left\{ \tilde{f}_{i}^{a} (\pi_1 \otimes \pi_2) \ \left| \begin{array}{l}\pi_1 \otimes \pi_2 \in C(\pi, v), \\ \tilde{e}_{i}(\pi_1) = 0 , \ \tilde{e}_{i}^{\langle \wt (\pi_1), \alpha_i^\vee \rangle + 1} (\pi_2) = 0, \\ a \geq 0.\end{array} \right. \right\} \setminus \{ 0 \} \right)\\
& \ \setminus \left\{ \tilde{f}_{i}^{\langle \wt(\pi_1), \alpha_i^\vee \rangle} (\pi_1) \otimes \tilde{f}_{i}^b(\pi_2) \left| \begin{array}{l}\pi_1 \otimes \pi_2 \in C(\pi, v), \\ \tilde{e}_{i}(\pi_1) = 0, \ \tilde{f}_{i}(\pi_2) \not\in \mathcal{B}_w(\mu) \sqcup \{ 0 \}, \\ 1 \leq b \leq \langle \wt(\pi_2), \alpha_i^\vee \rangle. \end{array} \right. \right\}.
\end{align*}
\end{thm}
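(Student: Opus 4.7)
The starting point is Corollary \ref{rec_step2_cor}, which gives
\begin{align*}
C(\pi, s_i v) = (C(\pi, v) \cup \varDelta^\prime (\pi, v, \text{\boldmath $i$}, i)) \setminus E(\pi, v, i).
\end{align*}
Writing $A$ for the first (unrestricted) set appearing in the right-hand side of the claimed formula, the plan is to deduce the theorem from this corollary by establishing
\begin{align*}
A \setminus E(\pi, v, i) = (C(\pi, v) \cup \varDelta^\prime(\pi, v, \text{\boldmath $i$}, i)) \setminus E(\pi, v, i).
\end{align*}
The inclusion $C(\pi, v) \cup \varDelta^\prime(\pi, v, \text{\boldmath $i$}, i) \subseteq A$ will be immediate: since the Demazure crystals $\mathcal{B}_v(\lambda)$ and $\mathcal{B}_w(\mu)$ are stable under raising Kashiwara operators, $C(\pi, v)$ is stable under $\tilde{e}_i$, so every element of $C(\pi, v)$ has the form $\tilde{f}_i^a$ applied to its $i$-highest predecessor (which remains in $C(\pi, v)$); moreover, by Lemma \ref{rec_lem1} the pair of conditions $\tilde{e}_i(\pi_1) = 0$ and $\tilde{e}_i^{\langle \wt(\pi_1), \alpha_i^\vee \rangle + 1}(\pi_2) = 0$ defining membership in $A$ is equivalent to $\tilde{e}_i(\pi_1 \otimes \pi_2) = 0$, which holds automatically for the $i$-highest predecessor.

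The substantive content is the reverse inclusion $A \setminus (C(\pi, v) \cup \varDelta^\prime(\pi, v, \text{\boldmath $i$}, i)) \subseteq E(\pi, v, i)$. I will pick a typical offending $x = \tilde{f}_i^{a}(\pi_1 \otimes \pi_2) \in A$ with $\pi_1 \otimes \pi_2 \in C(\pi, v)$ satisfying $\tilde{e}_i(\pi_1 \otimes \pi_2) = 0$, and assume $x$ lies in neither $C(\pi, v)$ nor $\varDelta^\prime$. The first exclusion forces $a \geq 1$, while the second forces $\tilde{f}_i(\pi_1 \otimes \pi_2) \in D(\pi, \text{\boldmath $i$})$ (the alternative $\tilde{f}_i(\pi_1 \otimes \pi_2) = 0$ would give $x = 0$ since $a \geq 1$). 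Invoking the string property (Lemma \ref{string}) through the identification in Remark \ref{rem_for_genDemcrys}(2) of the connected component $D(\pi, \text{\boldmath $i$})$ with a Demazure crystal, the entire $i$-string above $\pi_1 \otimes \pi_2$, and in particular $x$, lies in $D(\pi, \text{\boldmath $i$}) \subseteq \mathcal{B}_{\text{\boldmath $i$}, w, \lambda, \mu}$.

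Next I would exploit $\mathcal{B}_{\text{\boldmath $i$}, w, \lambda, \mu} \subseteq \mathcal{B}_v(\lambda) \otimes \mathcal{B}(\mu)$, which holds because $\text{\boldmath $i$}$ is a reduced word for $v$ and the tensor product rule keeps the first tensor factor of $\tilde{f}_{i_1}^{c_1} \cdots \tilde{f}_{i_l}^{c_l}(b_\lambda \otimes b)$ inside $\mathcal{B}_v(\lambda)$. Writing $x = \tilde{f}_i^{a_1}(\pi_1) \otimes \tilde{f}_i^{a_2}(\pi_2)$ via the tensor product rule, the first factor automatically belongs to $\mathcal{B}_v(\lambda)$; but $x \notin \mathcal{B}_v(\lambda) \otimes \mathcal{B}_w(\mu)$ (as $x \notin C(\pi, v)$), so $\tilde{f}_i^{a_2}(\pi_2) \notin \mathcal{B}_w(\mu)$ and in particular $a_2 \geq 1$. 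The string property for $\mathcal{B}_w(\mu)$ applied to the $i$-string through $\pi_2$ then forces the intersection with $\mathcal{B}_w(\mu)$ to be exactly $\{\pi_2\}$, yielding both $\tilde{e}_i(\pi_2) = 0$ and $\tilde{f}_i(\pi_2) \notin \mathcal{B}_w(\mu) \sqcup \{0\}$. Lemma \ref{rec_lem2} then delivers $\varepsilon_i(\pi_2) = 0$ and $\varphi_i(\pi_2) = \langle \wt(\pi_2), \alpha_i^\vee \rangle$. Combining this with $\tilde{e}_i(\pi_1) = 0$ (so $\varphi_i(\pi_1) = \langle \wt(\pi_1), \alpha_i^\vee \rangle$) and the explicit form of the tensor product rule, the split is pinned down to $a_1 = \langle \wt(\pi_1), \alpha_i^\vee \rangle$ and $1 \leq a_2 \leq \langle \wt(\pi_2), \alpha_i^\vee \rangle$, placing $x$ in $E(\pi, v, i)$ as required.

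The main technical hurdle I anticipate is the legitimate transfer of the string property to $D(\pi, \text{\boldmath $i$})$, which is only \emph{isomorphic} to, rather than literally being, a Demazure crystal; this relies squarely on Remark \ref{rem_for_genDemcrys}(2) and the elementary fact that a crystal isomorphism preserves $i$-strings. Once that has been nailed down, the remainder of the argument is a careful bookkeeping exercise combining the tensor product rule with the string property for the genuine Demazure crystal $\mathcal{B}_w(\mu)$.
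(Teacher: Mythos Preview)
Your proposal is correct and follows essentially the same route as the paper. Both start from Corollary~\ref{rec_step2_cor}, note the trivial inclusion $C(\pi,v)\cup\varDelta'(\pi,v,\text{\boldmath $i$},i)\subseteq\widetilde{C}(\pi,v,i)$, and reduce to showing $\widetilde{C}(\pi,v,i)\setminus(C(\pi,v)\cup\varDelta'(\pi,v,\text{\boldmath $i$},i))\subseteq E(\pi,v,i)$; both then use that $\tilde{f}_i(\pi_1\otimes\pi_2)\in D(\pi,\text{\boldmath $i$})$ together with the string property of $D(\pi,\text{\boldmath $i$})$ (via Remark~\ref{rem_for_genDemcrys}(2)) and the containment $\mathcal{B}_{\text{\boldmath $i$},w,\lambda,\mu}\subseteq\mathcal{B}_v(\lambda)\otimes\mathcal{B}(\mu)$ to force the first tensor factor into $\mathcal{B}_v(\lambda)$, whence the second factor must leave $\mathcal{B}_w(\mu)$, and Lemma~\ref{rec_lem2} plus the tensor product rule finish. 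Your write-up is in fact slightly more explicit than the paper's about the transfer of the string property to $D(\pi,\text{\boldmath $i$})$, which the paper invokes tacitly.
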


\begin{defn}
Let $\pi \in \mathcal{B}_w(\mu)^\lambda$. We take $i \in I$ such that $\ell(s_i v) > \ell(v)$. We set 
\begin{align*}
\widetilde{C}(\pi, v, i) := \left\{ \tilde{f}_{i}^{a} (\pi_1 \otimes \pi_2) \ \left| \begin{array}{l}\pi_1 \otimes \pi_2 \in C(\pi, v), \\ \tilde{e}_{i}(\pi_1) = 0 , \ \tilde{e}_{i}^{\langle \wt (\pi_1), \alpha_i^\vee \rangle + 1} (\pi_2) = 0, \\ a \geq 0.\end{array} \right. \right\} \setminus \{ 0 \}.
\end{align*}
\end{defn}

\begin{proof}[Proof of Theorem \ref{rec_step3}]
By Corollary \ref{rec_step2_cor} and the fact that $C(\pi, v)$ is included in $\widetilde{C}(\pi, v, i)$, it is sufficient to prove that
\begin{align*}
\widetilde{C} (\pi, v, i) \setminus (C(\pi, v) \cup \varDelta^\prime(\pi, v, \text{\boldmath $i$}, i)) \subset E(\pi, v, i).
\end{align*}

Take an element $\pi_1 \otimes \pi_2 \in C(\pi, v)$ satisfying the following conditions:
\begin{enumerate}[{(}1{)}]
\item $\tilde{e}_i(\pi_1) = 0$;
\item $\tilde{e}_i^{\langle \wt (\pi_1) , \alpha_i^\vee \rangle + 1} (\pi_2) = 0$;
\item $\tilde{f}_i(\pi_1 \otimes \pi_2) \in D(\pi, \text{\boldmath $i$})$.
\end{enumerate}

Assume that there exists $a \geq 0$ such that $\tilde{f}_i^a(\pi_1 \otimes \pi_2) \not\in C(\pi, v) \sqcup \{ 0 \}$. We need to prove that $\tilde{f}_i^a(\pi_1 \otimes \pi_2) \in E(\pi, v, i)$.

First, we show that $\tilde{f}_i(\pi_2) \not\in \mathcal{B}_w(\mu) \sqcup \{ 0 \}$. Suppose that $\tilde{f}_i(\pi_2) \in \mathcal{B}_w(\mu) \sqcup \{ 0 \}$. Since $\tilde{f}_i(\pi_1 \otimes \pi_2) \in D(\pi, \text{\boldmath $i$})$ and $D(\pi, \text{\boldmath $i$})$ is a subset of $\mathcal{B}_{\text{\boldmath $i$}, \lambda, w\mu}$, the path $\tilde{f}_i^c(\pi_1)$ belongs to $\mathcal{B}_v(\lambda)$ for $c$ such that $0 \leq c \leq \max\{\varphi_i(\pi_1) - \varepsilon_i(\pi_2), 0\}$. Therefore, $\tilde{f}_i^a(\pi_1 \otimes \pi_2) \in \mathcal{B}_v(\lambda) \otimes \mathcal{B}_w(\mu)$ which shows that $\tilde{f}_i^a(\pi_1 \otimes \pi_2) \in C(\pi, v)$, a contradiction. Thus, $\tilde{f}_i(\pi_2) \not\in \mathcal{B}_w(\mu) \sqcup \{ 0 \}$.

Since $\tilde{e}_i(\pi_1) = 0$ or equivalently $\varepsilon_i(\pi_1) = 0$, we have $\varphi_i(\pi_1) = \langle \wt (\pi_1), \alpha_i^\vee \rangle$ by an axiom of crystals. Also, we have $\varepsilon_i(\pi_2) = 0$ by Lemma \ref{rec_lem2}. Hence $\max\{ \varphi_i(\pi_1) - \varepsilon_i(\pi_2), 0 \} = \varphi_i(\pi_1) = \langle \wt (\pi_1), \alpha_i^\vee \rangle$.

Now, suppose that $0 \leq a \leq \langle \wt (\pi_1), \alpha_i^\vee \rangle$. Then $\tilde{f}_i^a (\pi_1 \otimes \pi_2) = \tilde{f}_i^a(\pi_1) \otimes \pi_2$. However, we have already shown that $\tilde{f}_i^a(\pi_1) \in \mathcal{B}_v(\lambda)$, and $\pi_2 \in \mathcal{B}_w(\mu)$ by the definition. Hence it follows that $\tilde{f}_i^a (\pi_1 \otimes \pi_2) \in \mathcal{B}_v(\lambda) \otimes \mathcal{B}_w(\mu)$. This implies that $\tilde{f}_i^a (\pi_1 \otimes \pi_2) \in C(\pi, v)$, a contradiction. Thus, $a > \langle \wt (\pi_1), \alpha_i^\vee \rangle$. Set $b := a - \langle \wt (\pi_1), \alpha_i^\vee \rangle$. Then $b \geq 1$ and $\tilde{f}_i^a (\pi_1 \otimes \pi_2) = \tilde{f}_i^{\langle \wt (\pi_1), \alpha_i^\vee \rangle}(\pi_1) \otimes \tilde{f}_i^b(\pi_2)$. Since $\tilde{f}_i^a (\pi_1 \otimes \pi_2) \not= 0$, we have $\tilde{f}_i^b(\pi_2) \not= 0$, which implies that $b \leq \varphi_i(\pi_2)$.

From the above argument, we can verify that $\tilde{f}_i^a(\pi_1 \otimes \pi_2) \in E(\pi, v, i)$.
\end{proof}

\begin{proof}[Proof of Theorem \ref{main_rec}]
It remains to prove
\begin{align}\label{rest_proof}
\bigcup_{a \geq 0} \tilde{f}_i^a(C(\pi, v)) \setminus \{ 0 \} = \widetilde{C}(\pi, v, i).
\end{align}

First, take an element $\xi \in \bigcup_{a \geq 0} \tilde{f}_i^a(C(\pi, v)) \setminus \{ 0 \}$. Then, there exist $a \geq 0$ and $\pi \otimes \pi^\prime \in C(\pi, v)$ such that $\xi = \tilde{f}_i^a(\pi \otimes \pi^\prime)$. Take $n \geq 0$ such that $\tilde{e}_i^{n}(\pi \otimes \pi^\prime) \not= 0$ and that $\tilde{e}_i^{n+1}(\pi \otimes \pi^\prime) = 0$. Set $\pi_1 \otimes \pi_2 := \tilde{e}_i^{n}(\pi \otimes \pi^\prime)$. Then, $\pi_1 \otimes \pi_2$ satisfies the following conditions:
\begin{enumerate}[{(}1{)}]
\item $\pi_1 \otimes \pi_2 \in C(\pi, v)$;
\item $\tilde{e}_i(\pi_1) = 0$, $\tilde{e}_i^{\langle \wt(\pi_1) , \alpha_i^\vee \rangle + 1}(\pi_2) = 0$;
\item $\pi \otimes \pi^\prime = \tilde{f}_i^n(\pi_1 \otimes \pi_2)$.
\end{enumerate}
Hence $\xi = \tilde{f}_i^{a+n}(\pi_1 \otimes \pi_2) \in \widetilde{C}(\pi, v, i)$, which implies that
\begin{align*}
\bigcup_{a \geq 0} \tilde{f}_i^a(C(\pi, v)) \setminus \{ 0 \} \subset \widetilde{C}(\pi, v, i).
\end{align*}

On the other hand, by the definition of $\widetilde{C}(\pi, v, i)$, it is clear that
\begin{align*}
\bigcup_{a \geq 0} \tilde{f}_i^a(C(\pi, v)) \setminus \{ 0 \} \supset \widetilde{C}(\pi, v, i).
\end{align*}

From the above argument, the proof of (\ref{rest_proof}) is completed, and hence Theorem \ref{main_rec} follows.
\end{proof}

\section{Some applications}\label{application}

In this section, we give some applications of the theorems that we proved above.

\subsection{Key positivity problem}
For $\nu \in P$, there is a unique $\lambda_\nu \in W\nu$ such that $\lambda_\nu \in P^+$. In this situation, there exists $u \in W$ such that $u\lambda_\nu = \nu$. Take $u_\nu := \lfloor u \rfloor^{\lambda_\nu}$, and set $\kappa_\nu := \ch(\mathcal{B}_{u_\nu}(\lambda_\nu))$. Note that $u_\nu$ does not depend on the choice of $u$.

If $\mathfrak{g} = \mathfrak{sl}_{n+1}$, then $\kappa_\nu$ is identical to a {\it key polynomial}. For details about key polynomials, see \cite{Reiner}. It is known that a product of key polynomials is a linear combination of key polynomials with integer coefficients.
\begin{thm}[{\cite[Corollary 7]{Reiner}}]
Assume that $\mathfrak{g} = \mathfrak{sl}_{n+1}$. Let $\lambda$ and $\mu$ be dominant integral weights, and $v, w$ elements of $W$. Then there exists an integer $a_{v, w, \lambda, \mu}^{\nu}$ for each $\nu \in P$ such that 
\begin{align*}
\kappa_{v\lambda} \kappa_{w\mu} = \sum_{\nu \in P} a_{v, w, \lambda, \mu}^{\nu} \kappa_{\nu}.
\end{align*}
\end{thm}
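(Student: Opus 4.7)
The plan is to reduce the theorem to the assertion that $\{\kappa_\nu\}_{\nu \in P}$ is a $\mathbb{Z}$-basis of the integral group ring $\mathbb{Z}[P]$. Granted this, the conclusion is immediate: the characters $\kappa_{v\lambda}$ and $\kappa_{w\mu}$ lie in $\mathbb{Z}[P]$ by definition, hence so does their product, and its expansion in the basis $\{\kappa_\nu\}_{\nu \in P}$ is forced to have integer coefficients by uniqueness.

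To establish the basis property, I would prove a unitriangularity identity of the form
\[
\kappa_\nu = e^\nu + \sum_{\mu \prec \nu} c_{\nu,\mu}\, e^\mu, \qquad c_{\nu,\mu} \in \mathbb{Z}_{\geq 0},
\]
with respect to a partial order $\prec$ on $P$ each of whose principal lower sets is finite. A convenient choice is: $\mu \prec \nu$ if and only if either $\lambda_\mu \lneq \lambda_\nu$ in the dominance order on $P^+$, or $\lambda_\mu = \lambda_\nu$ and $u_\mu \lneq u_\nu$ in the Bruhat order on minimal-length coset representatives in $W/W_{\lambda_\nu}$. Since $L(\lambda_\nu)$ has only finitely many weights and the Bruhat interval below $u_\nu$ is finite, principal lower sets in $\prec$ are indeed finite, and a unitriangular expansion of this form forces $\{\kappa_\nu\}_{\nu \in P}$ to be a $\mathbb{Z}$-basis of $\mathbb{Z}[P]$.

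The heart of the identity is an analysis of the weights of $\mathcal{B}_{u_\nu}(\lambda_\nu)$. The extremal weight $\nu = u_\nu \lambda_\nu$ has multiplicity one in $L(\lambda_\nu)$, and hence in $\mathcal{B}_{u_\nu}(\lambda_\nu)$, which accounts for the coefficient $1$ on $e^\nu$. Any other weight $\mu$ of $\mathcal{B}_{u_\nu}(\lambda_\nu)$ is a weight of $L(\lambda_\nu)$, so $\lambda_\mu \preceq \lambda_\nu$ in dominance: if this inequality is strict then $\mu \prec \nu$ by the first clause, and otherwise $\mu \in W\lambda_\nu$, so that the LS-path realization $\mathcal{B}_{u_\nu}(\lambda_\nu) = \{\pi \in \mathcal{B}(\lambda_\nu) \mid \iota(\pi) \leq u_\nu \lambda_\nu\}$ from Section \ref{LS} forces $u_\mu \leq u_\nu$, with strict inequality since $\mu \neq \nu$, giving $\mu \prec \nu$ by the second clause.

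The main obstacle is this last step, which identifies the extremal weights of $\mathcal{B}_{u_\nu}(\lambda_\nu)$ lying in $W\lambda_\nu$ with the Bruhat interval $\{u' \in W/W_{\lambda_\nu} \mid u' \leq u_\nu\}$. This rests on the LS-path description of Demazure crystals combined with the multiplicity-freeness of extremal weight vectors of $L(\lambda)$. Once the triangularity is in hand, the theorem follows from the general observation that unitriangular transition matrices preserve integrality of expansions.
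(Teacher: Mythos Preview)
The paper does not prove this statement; it is quoted from \cite[Corollary~7]{Reiner} as background for the key positivity discussion, with no argument supplied. So there is no in-paper proof to compare against.

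Your proposal is correct and is essentially the standard proof: one shows that $\{\kappa_\nu\}_{\nu \in P}$ is a $\mathbb{Z}$-basis of $\mathbb{Z}[P]$ by exhibiting a unitriangular transition to the monomial basis $\{e^\nu\}_{\nu \in P}$, after which the theorem is immediate. Your verification of the triangularity is sound. The coefficient of $e^\nu$ in $\kappa_\nu$ is $1$ because the extremal element $\pi^\nu$ is the unique element of $\mathcal{B}_{u_\nu}(\lambda_\nu)$ of weight $\nu$; for any other weight $\mu$ of $\mathcal{B}_{u_\nu}(\lambda_\nu)$, either $\mu \notin W\lambda_\nu$, in which case $\lambda_\mu$ is a dominant weight of $L(\lambda_\nu)$ strictly below $\lambda_\nu$, or $\mu \in W\lambda_\nu$, in which case the unique LS-path of weight $\mu$ is the straight line $\pi^\mu$ with $\iota(\pi^\mu) = \mu$, and the Demazure condition $\iota(\pi^\mu) \leq u_\nu\lambda_\nu$ forces $u_\mu < u_\nu$. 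Well-foundedness of your order holds because $\{\lambda' \in P^+ : \lambda' \leq \lambda_\nu\}$ is finite (it lies in the intersection of the lattice $P$ with the convex hull of $W\lambda_\nu$) and Bruhat intervals are finite. This is exactly how Reiner--Shimozono argue in type~$A$, phrased there in the language of compositions and key polynomials in $x_1,\ldots,x_n$.
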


The key positivity problem is the problem whether integers $a_{v, w, \lambda, \mu}^{\nu}$ are nonnegative or not. In some special cases, we have already known $a_{v, w, \lambda, \mu}^{\nu} \geq 0$ for each $\nu \in P$.

\begin{thm}\label{keypos_known1}
Let $\lambda, \mu \in P^+$, and $w \in W$. Then there exists a nonnegative integer $a_{e, w, \lambda, \mu}^{\nu}$ for each $\nu \in P$ such that 
\begin{align*}
\kappa_{\lambda} \kappa_{w\mu} = \sum_{\nu \in P} a_{e, w, \lambda, \mu}^{\nu} \kappa_{\nu}.
\end{align*}
\end{thm}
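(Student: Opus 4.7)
The plan is to interpret $\kappa_\lambda\,\kappa_{w\mu}$ as the character of a tensor product of Demazure crystals, decompose that tensor product using Theorem \ref{1tensDem}, and then match each summand with a key polynomial. First I would observe that, since $\lambda\in P^+$, the unique dominant element in $W\lambda$ is $\lambda$ itself, so we may take $u_\lambda=\lfloor e\rfloor^\lambda=e$, and therefore $\kappa_\lambda=\ch(\mathcal{B}_e(\lambda))=\ch(\{b_\lambda\})=e^\lambda$. Combined with the multiplicativity of the character on tensor products of crystals, this gives
\begin{align*}
\kappa_\lambda\,\kappa_{w\mu}=e^\lambda\cdot\ch(\mathcal{B}_w(\mu))=\ch\bigl(\mathcal{B}_e(\lambda)\otimes\mathcal{B}_w(\mu)\bigr).
\end{align*}

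Next, I would apply Theorem \ref{1tensDem}, which provides the crystal isomorphism $\mathcal{B}_e(\lambda)\otimes\mathcal{B}_w(\mu)\simeq\bigsqcup_{\pi\in\mathcal{B}_w(\mu)^\lambda}\mathcal{B}_{w(\pi)}(\lambda+\wt(\pi))$. Taking characters on both sides yields
\begin{align*}
\kappa_\lambda\,\kappa_{w\mu}=\sum_{\pi\in\mathcal{B}_w(\mu)^\lambda}\ch\bigl(\mathcal{B}_{w(\pi)}(\lambda+\wt(\pi))\bigr).
\end{align*}
The one point that must be checked here is that each weight $\eta_\pi:=\lambda+\wt(\pi)=\lambda+\pi(1)$ is dominant integral, which is immediate from the $\lambda$-dominance of $\pi$ (setting $t=1$ in $\langle\lambda+\pi(t),\alpha_i^\vee\rangle\geq0$).

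Finally, I would identify each summand with a key polynomial. Set $\nu_\pi:=w(\pi)\,\eta_\pi$. Since $\eta_\pi\in P^+$, we have $\lambda_{\nu_\pi}=\eta_\pi$, and because a Demazure crystal depends only on the weight $w(\pi)\,\eta_\pi$ rather than on the Weyl group element used to represent it, $\mathcal{B}_{w(\pi)}(\eta_\pi)=\mathcal{B}_{\lfloor w(\pi)\rfloor^{\eta_\pi}}(\eta_\pi)=\mathcal{B}_{u_{\nu_\pi}}(\lambda_{\nu_\pi})$. Hence $\ch(\mathcal{B}_{w(\pi)}(\eta_\pi))=\kappa_{\nu_\pi}$, and therefore
\begin{align*}
\kappa_\lambda\,\kappa_{w\mu}=\sum_{\pi\in\mathcal{B}_w(\mu)^\lambda}\kappa_{w(\pi)(\lambda+\wt(\pi))},
\end{align*}
with the coefficient of $\kappa_\nu$ being the nonnegative integer $a_{e,w,\lambda,\mu}^\nu=\#\{\pi\in\mathcal{B}_w(\mu)^\lambda\mid w(\pi)(\lambda+\wt(\pi))=\nu\}$. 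There is no serious obstacle here: all the combinatorial content is already packaged in Theorem \ref{1tensDem}, and the only care-point is the dominance of $\eta_\pi$ noted above.
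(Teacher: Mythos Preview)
Your proof is correct and takes essentially the same approach as the paper, which simply says ``Take characters of both sides of the isomorphism in Theorem \ref{1tensDem}.'' You have merely spelled out the details of that one-line argument, including the identification $\kappa_\lambda=e^\lambda$, the dominance of $\lambda+\wt(\pi)$, and the explicit description of the coefficients.
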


\begin{proof}
Take characters of both sides of the isomorphism in Theorem \ref{1tensDem}.
\end{proof}

\begin{rem}
For $\lambda \in P^+$, we have $\mathcal{B}_e(\lambda) = \{ b_\lambda \}$. Hence $\kappa_\lambda = \ch(\mathcal{B}_e(\lambda)) = e^\lambda$.
\end{rem}

\begin{thm}[{\cite[Theorem 6.1]{Haglund}}]\label{refLRrule}
Assume that $\mathfrak{g} = \mathfrak{sl}_{n+1}$. Let $\lambda, \mu \in P^+$, and $v \in W$. Then there exists a nonnegative integer $a_{v, w_\circ, \lambda, \mu}^{\nu}$ for each $\nu \in P$ such that 
\begin{align*}
\kappa_{v\lambda} \kappa_{w_\circ \mu} = \sum_{\nu \in P} a_{v, w_\circ, \lambda, \mu}^{\nu} \kappa_{\nu}.
\end{align*}
\end{thm}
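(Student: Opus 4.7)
The plan is to deduce this directly from Corollary~\ref{maincor} by specializing to $w = w_\circ$. The key observation is that for the longest element, $\ell(s_i w_\circ) < \ell(w_\circ)$ for every $i \in I$, so $D_L(w_\circ) = I$ and hence $W_{w_\circ} = W$. Moreover, since $w_\circ$ has maximal length in the entire Weyl group, it is the maximal-length representative of every coset $w_\circ W_\mu$, so $\lceil w_\circ \rceil^\mu = w_\circ$ and $W_{\lceil w_\circ \rceil^\mu} = W$. Consequently the hypothesis $\lfloor v \rfloor^\lambda \in W_{\lceil w \rceil^\mu}$ of Corollary~\ref{maincor} is automatic for all $v \in W$, and we obtain the crystal isomorphism
\begin{align*}
\mathcal{B}_v(\lambda) \otimes \mathcal{B}_{w_\circ}(\mu) \simeq \bigsqcup_{\pi \in \mathcal{B}_{w_\circ}(\mu)^\lambda} \mathcal{B}_{u(\pi, v)}(\lambda + \wt(\pi)).
\end{align*}

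Next I would take characters of both sides. The character is multiplicative under tensor product of crystals and additive under disjoint union, so the left-hand side becomes $\ch(\mathcal{B}_v(\lambda)) \cdot \ch(\mathcal{B}_{w_\circ}(\mu)) = \kappa_{v\lambda} \kappa_{w_\circ \mu}$, where I use the straightforward identity $\ch(\mathcal{B}_w(\lambda)) = \kappa_{w\lambda}$ for any $w \in W$ and $\lambda \in P^+$ (valid because $\mathcal{B}_w(\lambda) = \mathcal{B}_{\lfloor w \rfloor^\lambda}(\lambda)$ matches the definition of $\kappa_{w\lambda}$ via its minimal-length representative). The right-hand side becomes
\begin{align*}
\sum_{\pi \in \mathcal{B}_{w_\circ}(\mu)^\lambda} \kappa_{u(\pi, v)(\lambda + \wt(\pi))},
\end{align*}
which is a sum of key polynomials, each appearing with coefficient $+1$. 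Regrouping terms by the value $\nu := u(\pi, v)(\lambda + \wt(\pi)) \in P$ then gives
\begin{align*}
a_{v, w_\circ, \lambda, \mu}^{\nu} = \# \bigl\{ \pi \in \mathcal{B}_{w_\circ}(\mu)^\lambda \ \big| \ u(\pi, v)(\lambda + \wt(\pi)) = \nu \bigr\} \in \mathbb{Z}_{\geq 0},
\end{align*}
as required.

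Regarding potential obstacles: the argument is essentially immediate once Corollary~\ref{maincor} is in hand, so there is no serious difficulty in the sense of a hard step. The one place that deserves a careful check is the identification $\ch(\mathcal{B}_w(\lambda)) = \kappa_{w\lambda}$ in the definition of $\kappa_\nu$, particularly because $u(\pi, v)$ as constructed in the paper is not necessarily a minimal-length coset representative for $(u(\pi, v))(\lambda + \wt(\pi))$; however, this is resolved by invoking the standard independence of the Demazure crystal on the choice of representative within $wW_\lambda$. I would also note that although the statement of Theorem~\ref{refLRrule} restricts to $\mathfrak{g} = \mathfrak{sl}_{n+1}$ (because classically $\kappa_\nu$ means a key polynomial only in type $A$), the proof sketched here works uniformly in every finite type, yielding the more general Theorem~\ref{app1} advertised in the introduction.
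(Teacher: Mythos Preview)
Your proposal is correct and mirrors the paper's own derivation: the paper does not prove Theorem~\ref{refLRrule} directly (it is quoted from \cite{Haglund}), but in the remark following Corollary~\ref{keypos2} it observes exactly what you do, namely that $\lfloor v \rfloor^\lambda \in W = W_{w_\circ} = W_{\lceil w_\circ \rceil^\mu}$, so the result is the special case $w = w_\circ$ of Theorem~\ref{key_pos}/Corollary~\ref{maincor}, and your character computation matches the proof of Theorem~\ref{key_pos}. Your counting formula for $a_{v, w_\circ, \lambda, \mu}^{\nu}$ is an equivalent repackaging of the one given there.
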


\begin{rem}
(1) For $\mu \in P^+$, we have $\kappa_{w_\circ \mu} = \ch(\mathcal{B}_{w_\circ}(\mu)) = \ch(\mathcal{B}(\mu))$. Hence if $\mathfrak{g} = \mathfrak{sl}_{n+1}$, then $\kappa_{w_\circ \mu}$ is identical to a Schur polynomial.

\noindent (2) Haglund, Luoto, Maoson, and van Willigenburg wrote an explicit formula for $a_{v, w_\circ, \lambda, \mu}^{\nu}$ in terms of Littlewood-Richardson key skylines in \cite{Haglund}.
\end{rem}

However, the key positivity problem is an open problem. We obtain a partial solution for the key positivity problem, which is a refinement of Theorem \ref{keypos_known1} and Theorem \ref{refLRrule}.

\begin{thm}\label{key_pos}
Let $\lambda$ and $\mu$ be dominant integral weights, and $v, w$ elements of $W$. If $\lfloor v \rfloor^\lambda \in W_{\lceil w \rceil^\mu}$, then there exists a nonnegative integer $a_{v, w, \lambda, \mu}^{\nu}$ for each $\nu \in P$ such that 
\begin{align*}
\kappa_{v\lambda} \kappa_{w\mu} = \sum_{\nu \in P} a_{v, w, \lambda, \mu}^{\nu} \kappa_{\nu}.
\end{align*}
\end{thm}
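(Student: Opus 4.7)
The plan is to deduce Theorem \ref{key_pos} as a direct character-theoretic consequence of Corollary \ref{maincor}. Since the hypothesis $\lfloor v \rfloor^\lambda \in W_{\lceil w \rceil^\mu}$ is exactly the hypothesis of that corollary, I already have the crystal isomorphism
\[
\mathcal{B}_v(\lambda) \otimes \mathcal{B}_w(\mu) \simeq \bigsqcup_{\pi \in \mathcal{B}_w(\mu)^\lambda} \mathcal{B}_{u(\pi, v)}(\lambda + \wt(\pi)),
\]
so all that remains is to take $\ch(\cdot)$ of both sides and match things up with the definition of $\kappa_\nu$.

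First I would take characters of the left-hand side. Since $\wt(b \otimes b') = \wt(b) + \wt(b')$, the tensor product rule on crystals gives $\ch(\mathcal{B}_v(\lambda) \otimes \mathcal{B}_w(\mu)) = \ch(\mathcal{B}_v(\lambda)) \cdot \ch(\mathcal{B}_w(\mu))$. By the definition of the key polynomial, $\ch(\mathcal{B}_v(\lambda)) = \kappa_{v\lambda}$ and $\ch(\mathcal{B}_w(\mu)) = \kappa_{w\mu}$; this uses the standard fact that Demazure crystals depend only on the coset modulo the stabilizer, i.e.\ $\mathcal{B}_v(\lambda) = \mathcal{B}_{\lfloor v \rfloor^\lambda}(\lambda)$, so that the minimal-length representative chosen in the definition of $\kappa_\nu$ picks out the same crystal.

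Next I would take characters of the right-hand side. Disjoint unions contribute additively to characters, so
\[
\ch\!\left( \bigsqcup_{\pi \in \mathcal{B}_w(\mu)^\lambda} \mathcal{B}_{u(\pi, v)}(\lambda + \wt(\pi)) \right) = \sum_{\pi \in \mathcal{B}_w(\mu)^\lambda} \ch\!\left( \mathcal{B}_{u(\pi, v)}(\lambda + \wt(\pi)) \right).
\]
For each $\pi$, the weight $\nu(\pi) := u(\pi, v)(\lambda + \wt(\pi))$ lies in $W(\lambda + \wt(\pi))$, so $\lambda_{\nu(\pi)} = \lambda + \wt(\pi)$ and, again using independence of Demazure crystals on the coset, $\ch(\mathcal{B}_{u(\pi,v)}(\lambda+\wt(\pi))) = \kappa_{\nu(\pi)}$. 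Setting $a_{v,w,\lambda,\mu}^\nu := \#\{\pi \in \mathcal{B}_w(\mu)^\lambda \mid \nu(\pi) = \nu\}$, which is a nonnegative integer, yields the desired expansion $\kappa_{v\lambda}\kappa_{w\mu} = \sum_{\nu \in P} a_{v,w,\lambda,\mu}^\nu \kappa_\nu$.

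There is essentially no obstacle here; the content is entirely in Corollary \ref{maincor}. The only small bookkeeping point is confirming that the characters of the Demazure crystals appearing on the right-hand side are genuine key polynomials in the sense of the paper's definition, which is immediate from $\mathcal{B}_u(\lambda_\nu) = \mathcal{B}_{\lfloor u\rfloor^{\lambda_\nu}}(\lambda_\nu)$ for any $u$ with $u\lambda_\nu = \nu$.
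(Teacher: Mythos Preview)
Your proposal is correct and follows essentially the same approach as the paper: apply Corollary \ref{maincor}, take characters of both sides, and identify each $\ch(\mathcal{B}_{u(\pi,v)}(\lambda+\wt(\pi)))$ as a key polynomial. Your coefficient $a_{v,w,\lambda,\mu}^\nu = \#\{\pi \in \mathcal{B}_w(\mu)^\lambda \mid u(\pi,v)(\lambda+\wt(\pi)) = \nu\}$ agrees with the paper's formula $\#\{\pi \in \mathcal{B}_w(\mu)^\lambda \mid \wt(\pi) = \lambda_\nu - \lambda,\ \lfloor u(\pi,v)\rfloor^{\lambda_\nu} = u_\nu\}$, since $\lambda+\wt(\pi)$ is dominant.
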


\begin{proof}
By Corollary \ref{maincor}, we have
\begin{align*}
\mathcal{B}_v(\lambda) \otimes \mathcal{B}_w(\mu) &\simeq \bigsqcup_{\pi \in \mathcal{B}_w(\mu)^\lambda} \mathcal{B}_{u(\pi, v)}(\lambda + \wt (\pi)) \\
&= \bigsqcup_{\nu \in P} \mathcal{B}_{u_\nu}(\lambda_\nu)^{\oplus a_{v, w, \lambda, \mu}^{\nu}},
\end{align*}
where $a_{v, w, \lambda, \mu}^{\nu} = \# \{ \pi \in \mathcal{B}_w(\mu)^\lambda \ | \ \wt(\pi) = \lambda_\nu - \lambda, \ \lfloor u(\pi, v) \rfloor^{\lambda_\nu} = u_\nu \}$ are nonnegative integers. By taking characters, we conclude the desired assertion.
\end{proof}

\begin{cor}\label{keypos2}
Let $\lambda$ and $\mu$ be dominant integral weights, and $v, w$ elements of $W$. If $\lfloor v \rfloor^\lambda \in W_{\lceil w \rceil^\mu}$ or $\lfloor w \rfloor^\mu \in W_{\lceil v \rceil^\lambda}$, then the product of $\kappa_{v\lambda}$ and $\kappa_{w\mu}$ is a linear combination of some key polynomials with nonnegative integer coefficients.
\end{cor}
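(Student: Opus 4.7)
The plan is to obtain this corollary as a direct consequence of Theorem \ref{key_pos} together with the commutativity of multiplication in the group algebra $\mathbb{Z}[P]$. Since Theorem \ref{key_pos} already handles the first case $\lfloor v \rfloor^\lambda \in W_{\lceil w \rceil^\mu}$ verbatim, the only work is to reduce the second case to the first.

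First, I would observe that $\kappa_{v\lambda}\kappa_{w\mu} = \kappa_{w\mu}\kappa_{v\lambda}$ in $\mathbb{Z}[P]$, because $\mathbb{Z}[P]$ is a commutative ring. Next, suppose we are in the second case, namely $\lfloor w \rfloor^\mu \in W_{\lceil v \rceil^\lambda}$. Then I would apply Theorem \ref{key_pos} with the roles of the pairs $(v,\lambda)$ and $(w,\mu)$ interchanged: taking $(v', \lambda') := (w, \mu)$ and $(w', \mu') := (v, \lambda)$, the hypothesis $\lfloor v' \rfloor^{\lambda'} \in W_{\lceil w' \rceil^{\mu'}}$ becomes exactly $\lfloor w \rfloor^\mu \in W_{\lceil v \rceil^\lambda}$. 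Theorem \ref{key_pos} then yields nonnegative integers $a^\nu_{w,v,\mu,\lambda}$ with
\begin{align*}
\kappa_{w\mu}\kappa_{v\lambda} = \sum_{\nu \in P} a^\nu_{w,v,\mu,\lambda}\,\kappa_\nu,
\end{align*}
and by the commutativity observation above, the left-hand side equals $\kappa_{v\lambda}\kappa_{w\mu}$, which is what we want.

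There is essentially no obstacle here: the content is entirely absorbed into Theorem \ref{key_pos}, and the role of the corollary is merely to record the symmetrized hypothesis. If anything, the only point that deserves a line of care is making sure that Theorem \ref{key_pos} is stated with enough generality that its application after swapping is literally the same statement (it is, since $\lambda,\mu$ and $v,w$ play symmetric roles in its hypothesis and conclusion). Thus the entire proof reduces to ``apply Theorem \ref{key_pos}, possibly after swapping the two factors.''
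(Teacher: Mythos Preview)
Your proposal is correct and matches the paper's approach: the paper states the corollary without proof, treating it as an immediate consequence of Theorem \ref{key_pos} together with the commutativity of the product $\kappa_{v\lambda}\kappa_{w\mu} = \kappa_{w\mu}\kappa_{v\lambda}$ in $\mathbb{Z}[P]$. Your reduction of the second case to the first by swapping the roles of $(v,\lambda)$ and $(w,\mu)$ is exactly what is intended.
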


\begin{rem}
(1) For $\lambda, \mu \in P^+$ and $w \in W$, we have $\lfloor e \rfloor^\lambda = e \in W_{\lceil w \rceil^\mu}$. Hence Corollary \ref{keypos2} is a refinement of Theorem \ref{keypos_known1}.

\noindent (2) For $\lambda, \mu \in P^+$ and $v \in W$, we have $\lfloor v \rfloor^\lambda \in W = W_{w_\circ} = W_{\lceil w_\circ \rceil^\mu}$. Hence Corollary \ref{keypos2} is a refinement of Theorem \ref{refLRrule}.
\end{rem}

\begin{rem}
Classically, we know the Schubert positivity property; a product of Schubert polynomials is a linear combination of Schubert polynomials with nonnegative integer coefficients (see \cite[Section 10.6, Exercise 12]{Fulton}). Since a Schubert polynomial for a vexillary permutation is identical to certain key polynomial, the Schubert positivity property shows that key positivity property holds under certain condition. This condition is neither a necessary condition nor a sufficient condition for the condition in Corollary \ref{keypos2}.
\end{rem}

\subsection{The Leibniz rule for root operators}

First, we define maps $S_i$, $i \in I$, which appear in our theorem below.

\begin{defn}
For $i \in I$ and $\lambda \in P^+$, we define
\begin{align*}
S_i : \mathcal{B}(\lambda) \to \mathcal{B}(\lambda), \ b \mapsto \left\{ \begin{array}{ll} \tilde{f}_i^{\langle \wt(b), \alpha_i^\vee \rangle}(b) & \text{if } \langle \wt(b), \alpha_i^\vee \rangle \geq 0, \\ \tilde{e}_i^{-\langle \wt(b), \alpha_i^\vee \rangle}(b) & \text{if } \langle \wt(b), \alpha_i^\vee \rangle \leq 0. \end{array} \right.
\end{align*}

\begin{rem}
First, the following diagram is commutative, where the map $s_i : P \to P$ is the simple reflection corresponding to the simple root $\alpha_i$.
\begin{center}
\begin{tikzcd}
  \mathcal{B}(\lambda) \ar[r, "S_i"] \arrow[d, "\wt"'] & \mathcal{B}(\lambda) \ar[d, "\wt"] \\
  P \ar[r, "s_i"] & P.
\end{tikzcd}
\end{center}
Next, maps $S_i$, $i \in I$, satisfy the braid relations: Take $i, j \in I$. If $s_i s_j \in W$ has order $m$, then
\begin{align*}
S_i \circ S_j \circ S_i \circ S_j \circ \cdots = S_j \circ S_i \circ S_j \circ S_i \circ \cdots, 
\end{align*}
where the number of terms on both sides is $m$.

For these reasons, maps $S_i$, $i \in I$, can be thought of ``lifts of the simple reflections.'' For details, see \cite[Section 8]{Littelmann_path}.
\end{rem}
\end{defn}

Now, we give an equation, which is an analog of the Leibniz rule for Demazure operators.

\begin{thm}
It holds that
\begin{align*}
& \ \bigcup_{n \geq 0} \tilde{f}_{i}^{n} (\mathcal{B}_{v}(\lambda) \otimes \mathcal{B}_{w}(\mu)) \setminus \{ 0 \} \\
 = & \ (\mathcal{B}_{s_i v}(\lambda) \otimes \mathcal{B}_{w}(\mu)) \sqcup (S_{i} (\tilde{e}_i^{\max}(\mathcal{B}_v(\lambda))) \otimes (\mathcal{B}_{s_i w}(\mu) \setminus \mathcal{B}_{w}(\mu))).
\end{align*}
\end{thm}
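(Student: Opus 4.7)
The plan is to compare the two sides element-by-element using the tensor product rule for $\tilde f_i$, the stability of Demazure crystals under $\tilde e_i$, and the string property (Lemma \ref{string}). Set $B_1 := \mathcal{B}_{s_iv}(\lambda) \otimes \mathcal{B}_w(\mu)$ and $B_2 := S_i(\tilde e_i^{\max}(\mathcal{B}_v(\lambda))) \otimes (\mathcal{B}_{s_iw}(\mu) \setminus \mathcal{B}_w(\mu))$. Their disjointness is immediate from the second tensor factor, since elements of $B_1$ have second component in $\mathcal{B}_w(\mu)$ while elements of $B_2$ do not.

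For the inclusion $\subseteq$, take $c \otimes c' = \tilde f_i^n(b \otimes b') \neq 0$ with $b \in \mathcal{B}_v(\lambda)$, $b' \in \mathcal{B}_w(\mu)$. The tensor product rule produces $n_1, n_2 \geq 0$ with $n_1 + n_2 = n$, $c = \tilde f_i^{n_1}(b)$, and $c' = \tilde f_i^{n_2}(b')$; Lemma \ref{f(Dem)}(1) (using $\ell(s_iv)>\ell(v)$) gives $c \in \mathcal{B}_{s_iv}(\lambda)$. If $c' \in \mathcal{B}_w(\mu)$, then $c \otimes c' \in B_1$. Otherwise, Lemma \ref{f(Dem)} forces $\ell(s_iw) > \ell(w)$ and $c' \in \mathcal{B}_{s_iw}(\mu) \setminus \mathcal{B}_w(\mu)$; the string property applied to the $i$-string of $b'$ in $\mathcal{B}_w(\mu)$ shows that this string meets $\mathcal{B}_w(\mu)$ only at its top, so $\varepsilon_i(b') = 0$. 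Combined with $n_2 \geq 1$, the tensor product rule forces $n_1 = \varphi_i(b)$; writing $b_0 := \tilde e_i^{\max}(b) \in \mathcal{B}_v(\lambda)$ (string property again), I obtain $c = \tilde f_i^{\varphi_i(b)}(b) = \tilde f_i^{\varphi_i(b_0)}(b_0) = S_i(b_0)$, placing $c \otimes c'$ in $B_2$.

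For the reverse inclusion $\supseteq$, I treat the two pieces separately. Given $c \otimes c' \in B_1$, let $c_0 \otimes c_0'$ be the $i$-highest element of its $i$-string in $\mathcal{B}(\lambda) \otimes \mathcal{B}(\mu)$; by the tensor product rule $c_0 = \tilde e_i^{\max}(c)$ and $c_0'$ is an $\tilde e_i$-raise of $c'$. Since $c \in \mathcal{B}_{s_iv}(\lambda) = \bigcup_n \tilde f_i^n(\mathcal{B}_v(\lambda)) \setminus \{0\}$, the $i$-string of $c$ meets $\mathcal{B}_v(\lambda)$, so the string property yields $c_0 \in \mathcal{B}_v(\lambda)$; $\tilde e_i$-stability of $\mathcal{B}_w(\mu)$ gives $c_0' \in \mathcal{B}_w(\mu)$, so $c \otimes c' = \tilde f_i^n(c_0 \otimes c_0')$ lies in the LHS. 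Given $c \otimes c' \in B_2$, write $c = S_i(b_0) = \tilde f_i^{\varphi_i(b_0)}(b_0)$ with $b_0 \in \mathcal{B}_v(\lambda)$, $\tilde e_i(b_0) = 0$, and $c' = \tilde f_i^m(b_0')$ with $b_0' \in \mathcal{B}_w(\mu)$, $\tilde e_i(b_0') = 0$, $m \geq 1$ (the presentation of $b_0'$ is forced by the string property, since $c' \notin \mathcal{B}_w(\mu)$); the tensor product rule then yields $\tilde f_i^{\varphi_i(b_0) + m}(b_0 \otimes b_0') = c \otimes c'$, again in the LHS.

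The main technical hurdle is the identification $c = S_i(\tilde e_i^{\max}(b))$ in the exit case $c' \notin \mathcal{B}_w(\mu)$ of the $\subseteq$ direction, which requires the simultaneous use of the string property in $\mathcal{B}_w(\mu)$ (to pin down $\varepsilon_i(b') = 0$) and in $\mathcal{B}_v(\lambda)$ (to lift $b$ to its $i$-highest $b_0$ inside $\mathcal{B}_v(\lambda)$), together with the combinatorial observation that $n_2 \geq 1$ forces every left-lowering to precede any right-lowering. Once these are in place, the remaining verifications are routine bookkeeping with the tensor product rule and Lemma \ref{f(Dem)}.
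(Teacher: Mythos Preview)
Your proof is correct, but it takes a genuinely different route from the paper. The paper derives this identity as a consequence of the recursive formula Theorem~\ref{main_rec}: it decomposes $\mathcal{B}_{s_iv}(\lambda)\otimes\mathcal{B}_w(\mu)$ into connected components $C(\pi,s_iv)$, applies Theorem~\ref{main_rec} to each, reassembles the pieces, and then proves separately that $\bigsqcup_\pi E(\pi,v,i) = S_i(\tilde e_i^{\max}(\mathcal{B}_v(\lambda)))\otimes(\mathcal{B}_{s_iw}(\mu)\setminus\mathcal{B}_w(\mu))$. Your argument bypasses the recursive formula and the connected-component decomposition entirely, working instead by a direct element-by-element comparison using only the tensor product rule, $\tilde e_i$-stability of Demazure crystals, the string property (Lemma~\ref{string}), and Lemma~\ref{f(Dem)}. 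This is more elementary and self-contained: it shows the Leibniz-type identity can be established independently of Section~\ref{recursion}. The paper's approach, by contrast, exhibits the identity as a structural corollary of the recursive description of $C(\pi,s_iv)$, which fits the narrative of deducing global statements from the component-wise analysis. One minor remark: where you invoke the string property to get $\tilde e_i^{\max}(b)\in\mathcal{B}_v(\lambda)$, plain $\tilde e_i$-stability already suffices; the string property is only genuinely needed to force $\varepsilon_i(b')=0$ in the exit case and to place $c_0\in\mathcal{B}_v(\lambda)$ in the $B_1$ part of the reverse inclusion.
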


\begin{proof}
By Theorem \ref{main_rec}, we have
\begin{align*}
& \ \mathcal{B}_{s_i v}(\lambda) \otimes \mathcal{B}_w(\mu) \\
=& \ \bigsqcup_{\pi \in \mathcal{B}_w(\mu)^\lambda} C(\pi, s_i v) \\
=& \ \bigsqcup_{\pi \in \mathcal{B}_w(\mu)^\lambda} \left( \left( \bigcup_{a \geq 0} \tilde{f}_i^a (C(\pi, v)) \setminus \{ 0 \} \right) \setminus E(\pi, v, i) \right) \\
=& \ \bigsqcup_{\pi \in \mathcal{B}_w(\mu)^\lambda} \left( \bigcup_{a \geq 0} \tilde{f}_i^a (C(\pi, v)) \setminus \{ 0 \} \right) \setminus \bigsqcup_{\pi \in \mathcal{B}_w(\mu)^\lambda} E(\pi, v, i) \\
=& \ \left( \bigsqcup_{\pi \in \mathcal{B}_w(\mu)^\lambda} \bigcup_{a \geq 0} \tilde{f}_i^a (C(\pi, v)) \setminus \{ 0 \} \right) \setminus \bigsqcup_{\pi \in \mathcal{B}_w(\mu)^\lambda} E(\pi, v, i) \\
=& \ \left( \bigcup_{a \geq 0} \tilde{f}_i^a \left( \bigsqcup_{\pi \in \mathcal{B}_w(\mu)^\lambda} C(\pi, v) \right) \setminus \{ 0 \} \right) \setminus \bigsqcup_{\pi \in \mathcal{B}_w(\mu)^\lambda} E(\pi, v, i) \\
=& \ \left( \bigcup_{a \geq 0} \tilde{f}_i^a (\mathcal{B}_v(\lambda) \otimes \mathcal{B}_w(\mu)) \setminus \{ 0 \} \right) \setminus \bigsqcup_{\pi \in \mathcal{B}_w(\mu)^\lambda} E(\pi, v, i).
\end{align*}
Hence it is sufficient to show that
\begin{align}
\bigsqcup_{\pi \in \mathcal{B}_w(\mu)^\lambda} E(\pi, v, i) = S_{i} (\tilde{e}_i^{\max}(\mathcal{B}_v(\lambda))) \otimes (\mathcal{B}_{s_i w}(\mu) \setminus \mathcal{B}_{w}(\mu)). \label{to_prove_2}
\end{align}
We have
\begin{align*}
& \ \bigsqcup_{\pi \in \mathcal{B}_w(\mu)^\lambda} E(\pi, v, i)  \\
=& \ \left\{ \tilde{f}_{i}^{\langle \wt(\pi_1), \alpha_i^\vee \rangle} (\pi_1) \otimes \tilde{f}_{i}^b(\pi_2) \left| \begin{array}{l}\pi \in \mathcal{B}_w(\mu)^\lambda, \\ \pi_1 \otimes \pi_2 \in C(\pi, v), \\ \tilde{e}_{i}(\pi_1) = 0, \ \tilde{f}_{i}(\pi_2) \not\in \mathcal{B}_w(\mu) \sqcup \{ 0 \}, \\ 1 \leq b \leq \langle \wt(\pi_2), \alpha_i^\vee \rangle. \end{array} \right. \right\} \\
=& \ \left\{ \tilde{f}_{i}^{\langle \wt(\pi_1), \alpha_i^\vee \rangle} (\pi_1) \otimes \tilde{f}_{i}^b(\pi_2) \left| \begin{array}{l}\pi_1 \otimes \pi_2 \in \bigsqcup_{\pi \in \mathcal{B}_w(\mu)^\lambda} C(\pi, v), \\ \tilde{e}_{i}(\pi_1) = 0, \ \tilde{f}_{i}(\pi_2) \not\in \mathcal{B}_w(\mu) \sqcup \{ 0 \}, \\ 1 \leq b \leq \langle \wt(\pi_2), \alpha_i^\vee \rangle. \end{array} \right. \right\} \\
=& \ \left\{ \tilde{f}_{i}^{\langle \wt(\pi_1), \alpha_i^\vee \rangle} (\pi_1) \otimes \tilde{f}_{i}^b(\pi_2) \left| \begin{array}{l}\pi_1 \otimes \pi_2 \in \mathcal{B}_v(\lambda) \otimes \mathcal{B}_w(\mu), \\ \tilde{e}_{i}(\pi_1) = 0, \ \tilde{f}_{i}(\pi_2) \not\in \mathcal{B}_w(\mu) \sqcup \{ 0 \}, \\ 1 \leq b \leq \langle \wt(\pi_2), \alpha_i^\vee \rangle. \end{array} \right. \right\} \\
=& \ \left\{ \tilde{f}_{i}^{\langle \wt(\pi_1), \alpha_i^\vee \rangle} (\pi_1) \otimes \tilde{f}_{i}^b(\pi_2) \left| \begin{array}{l}\pi_1 \in \mathcal{B}_v(\lambda), \ \pi_2 \in  \mathcal{B}_w(\mu), \\ \tilde{e}_{i}(\pi_1) = 0, \ \tilde{f}_{i}(\pi_2) \not\in \mathcal{B}_w(\mu) \sqcup \{ 0 \}, \\ 1 \leq b \leq \langle \wt(\pi_2), \alpha_i^\vee \rangle. \end{array} \right. \right\} \\
=& \ \left\{ \tilde{f}_{i}^{\langle \wt(\pi_1), \alpha_i^\vee \rangle} (\pi_1) \left| \begin{array}{l}\pi_1 \in \mathcal{B}_v(\lambda), \\ \tilde{e}_{i}(\pi_1) = 0.\end{array} \right. \right\} \otimes \left\{ \tilde{f}_{i}^b(\pi_2) \left| \begin{array}{l}\pi_2 \in  \mathcal{B}_w(\mu), \\ \tilde{f}_{i}(\pi_2) \not\in \mathcal{B}_w(\mu) \sqcup \{ 0 \}, \\ 1 \leq b \leq \langle \wt(\pi_2), \alpha_i^\vee \rangle. \end{array} \right. \right\}.
\end{align*}

Let $\pi_1 \in \mathcal{B}_v(\lambda)$. If $\tilde{e}_i(\pi_1) = 0$, then $\langle \wt (\pi_1), \alpha_1^\vee \rangle \geq 0$. Hence it follows that $\tilde{f}_{i}^{\langle \wt(\pi_1), \alpha_i^\vee \rangle} (\pi_1) = S_i(\pi_1)$. Therefore, we have
\begin{align*}
& \ \left\{ \tilde{f}_{i}^{\langle \wt(\pi_1), \alpha_i^\vee \rangle} (\pi_1) \left| \begin{array}{l}\pi_1 \in \mathcal{B}_v(\lambda), \\ \tilde{e}_{i}(\pi_1) = 0.\end{array} \right. \right\} \\
=& \ \{ \tilde{f}_{i}^{\langle \wt(\pi_1), \alpha_i^\vee \rangle} (\pi_1) \ | \ \pi_1 \in \tilde{e}_i^{\max}(\mathcal{B}_v(\lambda)) \} \\
=& \ \{ S_i(\pi_1) \ | \ \pi_1 \in \tilde{e}_i^{\max}(\mathcal{B}_v(\lambda)) \} \\
=& \ S_i(\tilde{e}_i^{\max}(\mathcal{B}_v(\lambda))).
\end{align*}

Next we prove that
\begin{align}\label{RHS_2}
\left\{ \tilde{f}_{i}^b(\pi_2) \left| \begin{array}{l}\pi_2 \in \mathcal{B}_w(\mu), \\ \tilde{f}_{i}(\pi_2) \not\in \mathcal{B}_w(\mu) \sqcup \{ 0 \}, \\ 1 \leq b \leq \langle \wt(\pi_2), \alpha_i^\vee \rangle. \end{array} \right. \right\} = \mathcal{B}_{s_i w}(\mu) \setminus \mathcal{B}_w(\mu).
\end{align}

Assume that $\ell(s_i w) < \ell(w)$. Then one has $\mathcal{B}_{s_i w}(\mu) \setminus \mathcal{B}_w(\mu) = \emptyset$. Also, there is no $\pi_2 \in \mathcal{B}_w(\mu)$ such that $\tilde{f}_{i}(\pi_2) \not\in \mathcal{B}_w(\mu) \sqcup \{ 0 \}$. Hence we have
\begin{align*}
\left\{ \tilde{f}_{i}^b(\pi_2) \left| \begin{array}{l}\pi_2 \in \mathcal{B}_w(\mu), \\ \tilde{f}_{i}(\pi_2) \not\in \mathcal{B}_w(\mu) \sqcup \{ 0 \}, \\ 1 \leq b \leq \langle \wt(\pi_2), \alpha_i^\vee \rangle. \end{array} \right. \right\} = \emptyset.
\end{align*}
This proves (\ref{RHS_2}).

From now on, we assume that $\ell(s_i w) > \ell(w)$. In this case, the set $\bigcup_{n \geq 0}\tilde{f}_i^n(\mathcal{B}_w(\mu)) \setminus \{ 0 \}$ is identical to $\mathcal{B}_{s_i w}(\mu)$. Hence if we take $\pi_2 \in \mathcal{B}_w(\mu)$ and $b \in \mathbb{Z}$ such that $\tilde{f}_{i}(\pi_2) \not\in \mathcal{B}_w(\mu) \sqcup \{ 0 \}$ and $1 \leq b \leq \langle \wt(\pi_2), \alpha_i^\vee \rangle$, then we have $\tilde{f}_i^b(\pi_2) \in \mathcal{B}_{s_i w}(\mu)$. Moreover, the element $\tilde{f}_i^b(\pi_2)$ is not contained in $\mathcal{B}_w(\mu)$ since $\tilde{f}_{i}(\pi_2) \not\in \mathcal{B}_w(\mu) \sqcup \{ 0 \}$. Hence $\tilde{f}_i^b(\pi_2) \in \mathcal{B}_{s_i w}(\mu) \setminus \mathcal{B}_w(\mu)$.

For the opposite inclusion, we take an arbitrary element $\xi \in \mathcal{B}_{s_i w}(\mu) \setminus \mathcal{B}_w(\mu)$, set $\pi_2 := \tilde{e}_i^{\max}(\xi)$, and take $b \in \mathbb{Z}$ such that $\xi = \tilde{f}_i^b(\pi_2)$. By definition of $\pi_2$, we have $\tilde{e}_i(\pi_2) = 0$, and hence it follows that $\varepsilon_i(\pi_2) = 0$ and $\varphi_i(\pi_2) = \langle \wt(\pi_2), \alpha_i^\vee \rangle$. We claim that $\pi_2 \in \mathcal{B}_w(\mu)$. To check this, take a reduced expression $w = s_{j_1} \cdots s_{j_t}$ for $w$. Since $\ell(s_i w) > \ell(w)$ by our assumption, $s_i w = s_i s_{j_1} \cdots s_{j_t}$ is a reduced expression for $s_i w$. Take a string parametrization $\xi = \tilde{f}_{i}^{a_0} \tilde{f}_{j_1}^{a_1} \cdots \tilde{f}_{j_t}^{a_t} (\pi^\mu)$ for $\xi$. Then we have $\pi_2 = \tilde{e}_i^{\max}(\xi) = \tilde{f}_{j_1}^{a_1} \cdots \tilde{f}_{j_t}^{a_t} (\pi^\mu) \in \mathcal{B}_w(\mu)$. Also, we see that $b \geq 1$ since $\pi_2 \in \mathcal{B}_w(\mu)$ and $\xi = \tilde{f}_i^b(\pi_2) \not\in \mathcal{B}_w(\mu)$. Hence $\tilde{f}_i(\pi_2) \not= 0$. Suppose that $\tilde{f}_i(\pi_2) \in \mathcal{B}_w(\mu)$, then $\xi$ must be contained in $\mathcal{B}_w(\mu)$ because of the string property for $\mathcal{B}_w(\mu)$. This contradicts the assumption that $\xi \not\in \mathcal{B}_w(\mu)$. Hence $\tilde{f}_i(\pi_2) \not\in \mathcal{B}_w(\mu)$. Finally, one has $b \leq \varphi_i(\pi_2) = \langle \wt(\pi_2), \alpha_i^\vee \rangle$ since $\tilde{f}_i^b(\pi_2) \not= 0$. From these, we deduce that
\begin{align*}
\xi \in \left\{ \tilde{f}_{i}^b(\pi_2) \left| \begin{array}{l}\pi_2 \in \mathcal{B}_w(\mu), \\ \tilde{f}_{i}(\pi_2) \not\in \mathcal{B}_w(\mu) \sqcup \{ 0 \}, \\ 1 \leq b \leq \langle \wt(\pi_2), \alpha_i^\vee \rangle. \end{array} \right. \right\}.
\end{align*}

The above argument shows (\ref{RHS_2}). Thus, the proof of (\ref{to_prove_2}) has been completed.
\end{proof}

\end{document}